\DeclareSymbolFont{AMSb}{U}{msb}{m}{n}
   \renewcommand\@biblabel[1]{#1.}
      \numberwithin{equation}{section}
\definecolor{antiquewhite}{rgb}{0.98, 0.92, 0.84}
\definecolor{buff}{rgb}{0.94, 0.86, 0.51}
\definecolor{palecopper}{rgb}{0.85, 0.54, 0.4}
\definecolor{fluorescentyellow}{rgb}{0.8, 1.0, 0.0}
\definecolor{bole}{rgb}{0.47, 0.27, 0.23}
\definecolor{cornellred}{rgb}{0.7, 0.11, 0.11}
\definecolor{britishracinggreen}{rgb}{0.0, 0.26, 0.15}
\definecolor{cobalt}{rgb}{0.0, 0.28, 0.67}
\DeclareSymbolFont{usualmathcal}{OMS}{cmsy}{m}{n}
\DeclareSymbolFontAlphabet{\mathcal}{usualmathcal}
\newcommand{\A}{{\mathbb{A}}}
\newcommand{\C}{{\mathbb{C}}}
\newcommand{\N}{{\mathbb{N}}}
\newcommand{\Q}{{\mathbb{Q}}}
\newcommand{\T}{{\mathbb{T}}}
\newcommand{\Z}{{\mathbb{Z}}}
\DeclareMathOperator{\Hilb}{Hilb}
\DeclareMathOperator{\tor}{Tor}
\DeclareMathOperator{\Quot}{Quot}
\DeclareMathOperator{\red}{red}
\DeclareMathOperator{\Coh}{Coh}
\DeclareMathOperator{\Span}{Span}
\DeclareMathOperator{\Ob}{Ob}
\DeclareMathOperator{\GL}{GL}
\DeclareFontFamily{OT1}{rsfs}{}
\DeclareFontShape{OT1}{rsfs}{n}{it}{<-> rsfs10}{}
\DeclareMathAlphabet{\curly}{OT1}{rsfs}{n}{it}
\renewcommand\hom{\mathscr{H}\kern-0.3em\mathit{om}}
\newcommand\Hom{\operatorname{Hom}}
\DeclareMathOperator{\lHom}{\mathscr{H}\kern-0.3em\mathit{om}}
\DeclareMathOperator{\RRlHom}{\mathbf{R}\kern-0.025em\mathscr{H}\kern-0.3em\mathit{om}}
\DeclareMathOperator{\lExt}{{\mathscr{E}\kern-0.2em\mathit{xt}}}
\newcommand\Spec{\operatorname{Spec}}
\newcommand\Supp{\operatorname{Supp}}
\newcommand\id{\operatorname{id}}
\newcommand{\Calo}{\mathscr O}
\newcommand{\Calk}{\mathscr K}
\tikzset{commutative diagrams/arrow style=math font}
\tikzset{commutative diagrams/.cd,
mysymbol/.style={start anchor=center,end anchor=center,draw=none}}
\tikzset{
shift up/.style={
to path={([yshift=#1]\tikztostart.east) -- ([yshift=#1]\tikztotarget.west) \tikztonodes}
}
}
\theoremstyle{definition}
\newtheorem*{lemma*}{Lemma}
\newtheorem*{theorem*}{Theorem}
\newtheorem*{example*}{Example}
\newtheorem*{fact*}{Fact}
\newtheorem*{notation*}{Notation}
\newtheorem*{definition*}{Definition}
\newtheorem*{prop*}{Proposition}
\newtheorem*{remark*}{Remark}
\newtheorem*{corollary*}{Corollary}
\newtheorem*{conventions*}{Conventions}
\newtheorem*{convention*}{Convention}
\newtheorem{definition}{Definition}[section]
\newtheorem{example}[definition]{Example}
\newtheorem{notation}[definition]{Notation}
\newtheorem{remark}[definition]{Remark}
\newtheoremstyle{thm} 
        {3mm}
        {3mm}
        {\slshape}
        {0mm}
        {\bfseries}
        {.}
        {1mm}
        {}
\theoremstyle{thm}
\newtheorem{theorem}[definition]{Theorem}
\newtheorem{corollary}[definition]{Corollary}
\newtheorem{lemma}[definition]{Lemma}
\newtheorem{prop}[definition]{Proposition}
\newtheoremstyle{ex} 
        {3mm}
        {3mm}
        {}
        {0mm}
        {\scshape}
        {.}
        {1mm}
        {}
\theoremstyle{ex}
\newtheoremstyle{sol} 
        {3mm}
        {3mm}
        {}
        {0mm}
        {\scshape}
        {.}
        {1mm}
        {}
\theoremstyle{sol}
\newtheorem*{Acknowledgments*}{Acknowledgments}
\newcommand{\virg}{``}
\newcommand{\mm}{{\mathfrak{m}}}
\newcommand{\Calt}{{\mathscr{T}}}
\newcommand{\Cale}{{\mathscr{E}}}
\newcommand{\Calb}{{\mathscr{B}}}
\newcommand{\Calf}{{\mathscr{F}}}
\newcommand{\Calm}{{\mathscr{M}}}
\newcommand{\Caln}{{\mathcal{N}}}
\newcommand{\Calh}{{\mathscr{H}}}
\newcommand{\Calg}{{\mathscr{G}}}
\newcommand{\Calr}{{\mathscr{R}}}
\newcommand{\Calu}{{\mathscr{U}}}
\DeclareMathOperator{\sym}{Sym}
\newcommand{\sfrac}[2]{
	{\raise0.8ex\hbox{$#1$} \!\mathord{\left/
			{\vphantom {#1 #2}}\right.\kern-\nulldelimiterspace}
		\!\lower0.8ex\hbox{$#2$}}
} 
\newcommand{\ssum}[2]{ 
	\underset{#1}{\overset{#2}{\sum}} 
}
\newcommand{\Quadrant}[2]{
	\foreach \i in {0,...,{#1}}
	\draw[thick] (\i,0)--(\i,{#2+1});
	\foreach \j in {1,...,{#2}}
	\draw[thick] (0,\j)--({#1},\j);
	\draw[thick] (0,1)--(0,0)--({#1},0)--({#1},1);
	\draw[thick] (0,{#2})--(0,{#2+1})--({#1},{#2+1})--({#1},{#2});
}
\DeclareMathOperator{\OP}{OP}
\DeclareMathOperator{\reg}{reg}
\DeclareMathOperator{\Irr}{Irr}
\DeclareMathOperator{\gen}{gen}
\DeclareMathOperator{\exc}{Exc}
\DeclareMathOperator{\SL}{SL}
\newenvironment{customthm}[1]
{\innercustomthm}
{\endinnercustomthm}
\DeclareMathAlphabet\BCal{OMS}{cmsy}{b}{n}
\title{Moduli spaces of $\Z/k\Z$-constellations over $\A^2$}
\author{Michele Graffeo}
\begin{document}
\maketitle
	\begin{abstract}Let $\rho:\Z/k \Z\rightarrow \SL(2,\C)$ be a representation of a finite abelian group and let $\Theta^{\gen}\subset \Hom_\Z(R(\Z/k\Z),\Q)$ be the space of generic stability conditions on the set of $G$-constellations. We provide a combinatorial description of all the chambers $C\subset\Theta^{\gen}$ and prove that there are $k!$ of them. Moreover, we introduce the notion of simple chamber and we show that, in order to know all toric $G$-constellations, it is enough to build all simple chambers. We also prove that there are $k\cdot 2^{k-2} $ simple chambers. Finally, we provide an explicit formula for the tautological bundles $\Calr_C$ over the moduli spaces $\Calm _C$ for all chambers $C\subset \Theta^{\gen}$ which   only depends upon the chamber stair which is a  combinatorial object attached to the chamber $C$.
	\end{abstract}
\tableofcontents
\section{Introduction}Given a Gorenstein singular variety $X$, a crepant resolution is a proper birational morphism
	$$Y\xrightarrow{\varepsilon}X$$
	where $Y$ is smooth and the canonical bundle is preserved, i.e. $\omega_Y=\varepsilon^*\omega_X$.
	
	 It was proven by Watanabe in \cite{WATANABE} that the singularities of the form $\A^n/G$, where $G\subset\SL(n,\C)$ is a finite subgroup, are Gorenstein.  Their crepant resolutions appear in several fields of Algebraic Geometry and Mathematical Physics, for example see \cite{BRUZZO,ITOREID,REID} and the references therein.
	
	Even though, in general, crepant resolutions may not exist, their existence is guaranteed in dimension 2 and 3: see \cite{DUFREE} for dimension 2, and see Roan \cite{ROAN1,ROAN2}, Ito \cite{ITOCREP} and Markushevich \cite{MARKU} for dimension 3. In particular, the 3-dimensional case was solved by a case by case analysis, taking advantage of the fact that the conjugacy classes of finite subgroups of $\SL(3,\C)$ were listed, for example in \cite{YU}.
	
	More recently, in \cite{BKR}, Bridgeland, King and Reid proved in one shot that a resolution always exists in dimension 3. The resolution that they proposed is made in terms of $G$-clusters, i.e. $G$-equivariant zero-dimensional subschemes $Z$ of $\A^n$ such that $H^0(Z,\Calo_Z)\cong\C[G]$ as $G$-modules (\Cref{cluster}). In particular, in \cite{BKR} it was proved that there exists a crepant resolution 
	$$G\mbox{-}\Hilb(\A^3)\rightarrow \A^3/G$$
	where $G$-$\Hilb(\A^3)$ is the fine moduli space of $G$-clusters. Notice that this result had already been obtained for abelian actions by Nakamura in \cite{NAKAMURA}.
	
	In \cite{CRAWISHII} Craw and Ishii generalized the notion of $G$-cluster to that of $G$-constellation, i.e. a coherent $G$-sheaf $\Calf$ such that $H^0(\A^n,\Calf)\cong \C[G]$ as $G$-modules (\Cref{constellation}). Moreover, in the case of $G$ abelian the authors in \cite{CRAWISHII} introduced a notion of $\theta$-stability for $G$-constellations (\Cref{stability}), following the ideas in King \cite{KING}.  They proved that, for any abelian subgroup $G\subset\SL(3,\C)$ and for any  crepant resolution $Y\xrightarrow{\varepsilon} \A^3/G$ there exists at least a generic stability condition $\theta$ and an isomorphism $\Calm_\theta \xrightarrow{\varphi} Y $ such that the composition $\varepsilon\circ\varphi$ agrees with the restriction
	$$\Calm_\theta\rightarrow\A^3/G$$
 of the Hilbert--Chow morphism, to the irreducible component $\Calm_\theta$ of the fine moduli space of $\theta$-stable $G$-constellations containing free orbits. Moreover, they conjectured that the same is true for any finite subgroup of $\SL(3,\C)$. Recently, this conjecture has been affirmatively solved by Yamagishi in \cite{prova2}. 
	
	It turns out that the space of generic stability conditions $\Theta^{\gen}\subset \Theta$ is a disjoint union of connected components called chambers. Moreover, in each chamber $C$, the notion of stability is constant, i.e. for any $\theta,\theta'\in C$, a $G$-constellation is $\theta$-stable if and only if it is $\theta'$-stable.
	
	In this paper I will focus on the 2-dimensional abelian case, i.e. the case when $G\subset\SL(2,\C)$ is a finite abelian, and hence cyclic, subgroup. In the literature the singularity $\A^2/G$ is sometimes called the $A_{|G|-1}$ singularity. This case is particularly simple from the point of view of the resolution because we know, from classical surface theory, that there is a unique minimal crepant resolution. Therefore, all the moduli spaces $\Calm_\theta$ are isomorphic as quasi-projective varieties. As a consequence, in order to distinguish two chambers it is enough to study their universal families $\Calu_C \in\Ob\Coh(\Calm_C\times \A^2)$.
	The first main result in the paper is the following. 
	\begin{customthm}{\ref{TEO1}} If $G\subset \SL(2,\C)$ is a finite abelian subgroup of cardinality $k=|G|$, then the space of generic stability conditions $\Theta^{\gen}$ is the disjoint union of $k!$ chambers.
	\end{customthm}
	
	The result in \Cref{TEO1} can be also recovered, via different arguments, from the theory developed by Kronheimer in \cite{KRON} (See also \cite[Chapter 3-\S 3]{CASSLO} for  the algebraic interpretation), but the approach to the abelian case here is different and it helps to prove the other results. 
	
	In order to prove \Cref{TEO1}, I will give an exhaustive combinatorial description of the toric points of the spaces $\Calm_\theta$ in terms of very classical combinatorial objects, namely skew Ferrers diagrams. Such diagrams are standard tools in many branches of mathematics, e.g. enumerative geometry, group theory, commutative algebra etc (for example \cite{BRIANCON,FULTREP,ANDREA}).
	
	Next, I will introduce the notion of simple chamber (\Cref{simplechamber}) and I will show that,   for any indecomposable $G$-constellation $\Calf$, there exists at least a simple chamber $C$ such that $\Calf$ is $\theta$-stable for all $\theta\in C$. This property makes simple chambers useful, because knowing them is the same as knowing all the $ G $-constellations. In order to define simple chambers, I will need to construct chamber stairs (\Cref{chamberstair}), combinatorial objects that I will use to encode all the data of a chamber $C$. 
	
	The second theorem I prove is the following. 
	\begin{customthm}{\ref{teosimple}}If $G\subset \SL(2,\C)$ is a finite abelian subgroup of cardinality $k=|G|$, then the space of generic stability conditions $\Theta^{\gen}$ contains $k\cdot 2^{k-2}$ simple chambers.
	\end{customthm}
	Finally, in \Cref{costruzione} I will give a commutative algebra construction that allows one to write an explicit formula for the tautological bundle 
	$$\Calr_\theta\in\Ob\Coh(\Calm_\theta),$$
	i.e. the pushforward of the universal family $\Calu_\theta \in\Ob\Coh(\Calm_\theta\times \A^2)$ via the first projection. This construction can be easily implemented using some software such as Macaulay2 \cite{M2}. 
	Moreover, it provides a realization of all the moduli spaces $\Calm_\theta$ as a $G$-invariant subvariety of $\Quot_{\Calk_C}^{|G|}(\A^2)$ where $\Calk_C\in\Ob\Coh(\A^2)$ is an ideal sheaf dependent only upon the chamber $C$ such that $\theta\in C$ (see \Cref{quot}). This solves, in 2-dimensions, a problem related to the one raised by Nakamura in \cite[Problem 6.5.]{NAKAMURA} and it also implies that to give a chamber is equivalent to give its chamber stair (\Cref{chamberstair}).

	This paper gives some contributions to the solution of several open problems regarding the subject, and provides some techniques that seem to be applicable to more general situations, such as some non-abelian, even 3-dimensional, case for example following the ideas in \cite{NOLLA1,NOLLA2}.
	
	After providing, in the first section, some technical preliminaries and some known facts, I will devote the second section to a brief description of the singularity $ \A ^ 2 / G $ and to its minimal resolution.
	
	In the third section I will prove that the toric  $ G $-constellations are completely described in terms  of $ G $-stairs, which are certain diagrams whose definition I will give in \Cref{stair}.
	
	The following sections (4 and 5), are devoted to the proofs of the two main theorems, while in the last section I will give the above mentioned commutative algebra construction.
	\section*{Acknowledgments}I thank my advisor Ugo Bruzzo  for his guidance and support. I also thank Mario De Marco, Massimo Gisonni and Felipe Luis López Reyes for very helpful discussions. Special thanks to Alastair Craw, Maria Luisa Graffeo and Andrea T. Ricolfi for giving me useful suggestions while writing the paper.
 \section{Preliminaries}\label{sec1}
	Given a finite group $G$ and a representation $\rho:G\rightarrow \GL(n,\C)$, we have an action of $G$ on the polynomial ring $\C[x_1,\ldots,x_n]$, given by
\[
\begin{tikzcd}[row sep=tiny]
G\times\C[x_1,\ldots,x_n] \arrow{r} & \C[x_1,\ldots,x_n]& \\
(g,p)\arrow[mapsto]{r} & p\circ \rho(g)^{-1}
\end{tikzcd}
\]
	where $p$ and $\rho(g)^{-1}$ are thought respectively as a polynomial and a linear function. Out of this, we can build the quotient singularity $$\A^n/G=\Spec\C[x_1,\ldots,x_n]^G $$
	whose points parametrize the set-theoretic orbits of the action of $G$ on $\A^n$ induced by $\rho$.
	
	Given a representation $\rho:G\rightarrow\GL(n,\C)$, a \textit{$\rho$-equivariant sheaf} (or a \textit{$\rho$-sheaf} in the sense of \cite{BKR}) is a coherent sheaf $\Calf\in\Ob\Coh(\A^n)$ together with a lift to $\Calf$ of the $G$-action  on $\A^n$ induced by $\rho$, i.e. for all $g\in G$ there are morphisms $\lambda_g^\Calf:\Calf\rightarrow\rho(g)^*\Calf$ such that:
	\begin{itemize}
		\item $\lambda_{1_G}^\Calf=\id_\Calf$,
		\item $\lambda_{hg}^\Calf=\rho(g)^*(\lambda^\Calf_h)\circ \lambda^\Calf_g$,
	\end{itemize}
where $1_G$ is the unit of $G$.
	In particular, this induces a structure of representation on the vector space $H^0(\A^n,\Calf)$ as above
\[
\begin{tikzcd}[row sep=tiny]
G\times H^0(\A^n,\Calf) \arrow{r} & H^0(\A^n,\Calf) \\
(g,s)\arrow[mapsto]{r} & (\lambda_g^\Calf)^{-1}\circ \rho(g)^*(s).
\end{tikzcd}
\]

	Whenever the representation is an inclusion $G\subset \GL(n,\C)$ we will omit the representation and we will talk about  \textit{$G$-equivariant sheaf} (or \textit{$G$-sheaf}).
	\begin{definition}\label{cluster} Let $G\subset \GL(n,\C)$ be a finite subgroup. A \textit{$G$-cluster} is a zero-dimensional subscheme $Z$ of $\A^n$ such that:
		\begin{itemize}
			\item the structure sheaf $\Calo_Z$ is $G$-equivariant, i.e. the ideal $I_Z$ is invariant with respect to the action of $G$ on $\C[x_1,\ldots,x_n]$, and
			\item if $\rho_{\reg}:G\rightarrow\GL(\C[G])$ is the regular representation, then there is an isomorphism of representations $$\varphi: H^0(Z,\Calo_Z)\rightarrow \C[G],$$
			i.e. $\varphi$ is an isomorphism of vector spaces such that the following diagram:
			\begin{center}
				\begin{tikzpicture}
					\node at (-2,1) {$G\times H^0(Z,\Calo_Z)$};
					\node at (-2,-1) {$G\times \C[G]$};
					\node at (1,1) {$ H^0(Z,\Calo_Z)$};
					\node at (1,-1) {$\C[G]$};
					\node[right] at (1,0) {\small$\varphi$};
					\node[left] at (-2,0) {\small$\id_G\times\varphi$};
					\draw[->] (1,0.7)--(1,-0.7);
					\draw[->] (-2,0.7)--(-2,-0.7);
					\draw[->] (-0.7,1)--(0.1,1);
					\draw[->] (-1.1,-1)--(0.5,-1);
				
					\end{tikzpicture}
			\end{center}
			where the horizontal arrows are the $G$-actions, commutes.
		\end{itemize}
	
		We will denote by $\Hilb^G(\A^n)$ the fine moduli space of $G$-clusters and, by $G$-$\Hilb(\A^n)$ the irreducible component of $\Hilb^G(\A^n)$ containing the free $G$-orbits.
	\end{definition}
   Recall that,  for all $ n \ge 1$ and for all $G\subset \SL(n,\C)$ finite subgroup, the singularities of the form $\A^n/G$ are Gorenstein (cf. \cite{WATANABE}).  
   
	\begin{theorem}[{\cite[Theorem 1.2]{BKR}}]\label{BKR} Let $G\subset \SL(n,\C)$ be a finite subgroup where $n=2,3$. Then, the Hilbert-Chow morphism
		$$Y:=G\mbox{-}\Hilb(\A^n)\xrightarrow{\varepsilon}\A^n/G=:X$$ is a crepant resolution of singularities, i.e. $\omega_Y\cong \varepsilon^*\omega_X$.
	\end{theorem}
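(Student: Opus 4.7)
The plan is to follow the derived-categorical strategy pioneered by Bridgeland, King and Reid. Let $Y = G\mbox{-}\Hilb(\A^n)$ and let $\mathcal{Z} \subset Y\times \A^n$ denote the universal closed subscheme, which is a $G$-equivariant flat family of $G$-clusters over $Y$ (with $G$ acting trivially on $Y$). This data produces a Fourier--Mukai-type integral transform
$$\Phi \defeq \mathbf{R}\pi_{2,*}\bigl(\pi_1^{\ast}(-) \otimes^{\mathbf{L}} \Calo_{\mathcal{Z}}\bigr) \colon \Db(Y) \longrightarrow \Db_G(\A^n),$$
where $\pi_1,\pi_2$ are the two projections from $Y\times\A^n$. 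The overall strategy is that once $\Phi$ has been shown to be an equivalence of triangulated categories, all geometric assertions demanded by the theorem---smoothness of $Y$ and the crepancy $\omega_Y \cong \varepsilon^{\ast}\omega_X$---follow formally from the existence of a derived equivalence between $Y$ and the smooth quotient stack $[\A^n/G]$.

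To prove that $\Phi$ is fully faithful, I would invoke a Bondal--Orlov-type criterion adapted to the equivariant setting. Checking the criterion reduces to computing, for pairs of closed points $y,y' \in Y$, the Ext-groups $\Ext^i_{\Db_G(\A^n)}(\Phi(\Calo_y),\Phi(\Calo_{y'}))$ and verifying that they vanish unless $y=y'$ and $0 \leq i \leq \dim Y$, with $\Hom(\Phi(\Calo_y),\Phi(\Calo_y))=\C$. Orthogonality for pairs of distinct orbits is essentially tautological; the substantive content is a local Ext bound along the diagonal, which can be repackaged geometrically as the fibre-dimension estimate
$$\dim (Y \times_X Y) \;\leq\; \dim Y + 1 \;=\; n+1.$$
Once full faithfulness is in place, $\Phi$ is automatically an equivalence via a standard Serre-functor argument, both source and target admitting compatible Serre functors.

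The main obstacle, and the reason the theorem is restricted to $n\in\{2,3\}$, is precisely this fibre-dimension bound. The natural approach is to stratify $X = \A^n/G$ by the conjugacy types of stabilisers, exploit projectivity of $\varepsilon$, and combine a deformation-theoretic analysis of $G$-clusters supported over a fixed orbit with dimension counts on each stratum. The inequality one needs---roughly, $\dim\varepsilon^{-1}(p) + \dim(\text{stratum through }p) \leq n-1$ for every $p \in X$---holds in dimensions two and three but fails in higher dimensions, matching the known fact that crepant resolutions of $\A^n/G$ need not exist for $n\geq 4$. In dimension $n=2$ one may alternatively bypass derived categories entirely, relying on Kronheimer's hyper-Kähler construction or the classical ADE resolution.

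Finally, from the derived equivalence one harvests the geometric statements formally. Smoothness of $Y$ follows because $\Db_G(\A^n)$ has finite global dimension (the stack $[\A^n/G]$ being smooth), a property preserved under derived equivalences by a theorem of Bondal--Van den Bergh. Crepancy is then automatic: a derived equivalence between smooth varieties intertwines the respective Serre functors, and since $\omega_{[\A^n/G]}$ is trivial (by Watanabe's theorem, cited earlier in the paper), the induced Serre functor on $\Db(Y)$ must be pure shift by $\dim Y$. This forces $\omega_Y \cong \varepsilon^{\ast}\omega_X$, completing the argument.
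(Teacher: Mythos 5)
This theorem is not proved in the paper at all: it is quoted verbatim from \cite[Theorem 1.2]{BKR} as background, so there is no internal argument to compare your proposal against. Judged against the cited source, your sketch does reproduce the architecture of the Bridgeland--King--Reid proof: the integral transform $\Phi$ built from the universal family of $G$-clusters, a pointwise Bondal--Orlov-type criterion for full faithfulness, the bound $\dim(Y\times_X Y)\le n+1$ as the precise place where the restriction $n\le 3$ enters, and a Serre-functor comparison to extract crepancy from the triviality of $\omega_{\A^n}$ as a $G$-sheaf.

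There is, however, one genuine logical gap in the order of your deductions. You propose to first establish that $\Phi$ is an equivalence and then harvest smoothness of $Y$ ``formally'' from preservation of finite global dimension. This is circular as stated: the fully-faithfulness criterion you invoke (and the Serre-functor argument that upgrades full faithfulness to an equivalence) presupposes control of the source category --- in practice, smoothness of $Y$, or at least a Serre functor on the relevant subcategory of $\Db(Y)$ --- and $Y=G$-$\Hilb(\A^n)$ is \emph{not} known to be smooth a priori; indeed its smoothness is one of the conclusions of the theorem. The new ingredient of \cite{BKR}, which your sketch omits, is the intersection theorem of commutative algebra (Peskine--Szpiro, Hochster, Roberts): applied to the complexes $\Psi\Phi(\Calo_y)$, where $\Psi$ is the adjoint, it forces these to be quasi-isomorphic to $\Calo_y$ \emph{and} forces $Y$ to be nonsingular at $y$, simultaneously and as part of the proof that $\Phi$ is an equivalence, not as a corollary of it. A secondary caveat: since $Y$ and $\A^n$ are only quasi-projective, there is no Serre functor on all of $\Db(Y)$; the duality argument must be run on objects with proper support over $X$, yielding triviality of $\omega_Y$ on the fibres of $\varepsilon$, which is then combined with triviality off the exceptional locus to conclude $\omega_Y\cong\varepsilon^*\omega_X$. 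With the intersection-theorem step restored and the compact-support caveat observed, your outline matches the cited proof.
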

\begin{remark} The Hilbert-Chow morphism $\varepsilon$ mentioned in \Cref{BKR} is a $G$-equivariant version of the usual Hilbert-Chow morphism
	$$\overline{\varepsilon}:\Hilb^{|G|}(\A^n)\rightarrow \sym^{|G|}(\A^n).$$
	In particular $\varepsilon$ can be thought of as the restriction of $\overline{\varepsilon}$ to the $G$-invariant subvariety $G$-$\Hilb(\A^n)\subset \Hilb^{|G|}(\A^n)$.
\end{remark}
	A natural generalization of the concept of a $G$-cluster is given in \cite{CRAWISHII}, and it is achieved by consider coherent $\Calo_{\A^n}$-modules which are not necessarily the structure sheaves of zerodimensional subschemes of $\A^n$.
	\begin{definition}[{\cite[Definition 2.1]{CRAWISHII}}]\label{constellation} Let $G\subset \GL(n,\C)$ be a finite subgroup. A \textit{$G$-constellation} is a coherent $\Calo_{\A^n}$-module $\Calf$ on $\A^n$ such that:
		\begin{itemize}
			\item $\Calf$ is $G$-equivariant, i.e. there is a fixed lift on $\Calf$ of the $G$-action  on $\A^n$, and
			\item there is an isomorphism of representations $$\varphi: H^0(\A^n,\Calf)\rightarrow \C[G].$$
	\end{itemize}\end{definition}
	\begin{remark} Since a $G$-constellation $\Calf$ is a coherent sheaf on the affine variety $\A^n$, sometimes, by abuse of notations, we address the name $G$-constellation to the space of global sections $H^0(\A^n,\Calf)$ as well as $\Calf$ and, sometimes, we   treat a $G$-constellation as if it were a $\C[x_1,\ldots,x_n]$-module, meaning that we are working with the space of its global sections.
	\end{remark}
	\begin{remark}\label{freunic}  The $G$-equivariance hypothesis implies that the support of a $G$-constellation is a  union of $G$-orbits. Moreover, for dimensional reasons, the only constellations supported on a free orbit $Z$ are isomorphic to the structure sheaf $\Calo_Z$.
	\end{remark}
	\begin{remark}\label{shur}Recall that ({see, for example, \cite[chapters 1 and 2]{FULTREP}}), given a finite group $G$ and the set of isomorphism classes of its irreducible representations $$\Irr(G)=\{\mbox{Irreducible representations}\}/\mbox{iso},$$
		there is a ring isomorphism $$\Psi:R(G)\xrightarrow{\sim}\underset{\rho\in\Irr(G)}{\bigoplus} \Z\rho, $$
		where $(R(G),\oplus)$ is the Grothendieck group of isomorphism classes of representations of $G$, and the ring structure (on both sides) is induced by tensor product $\otimes$ of representations. Moreover $\Irr(G)=\{\rho_1,\ldots,\rho_s\}$ is finite, and we have the correspondence:
\[
\begin{tikzcd}[row sep=tiny]
R(G) \arrow{r}{\Psi} & \underset{i=1}{\overset{s}{\bigoplus}}\Z\rho_i \\
\C[G]\arrow[mapsto]{r} & (\dim \rho_1,\ldots,\dim \rho_s).
\end{tikzcd}
\]
	\end{remark}
	Following the ideas in \cite{KING}, the above mentioned properties allow one to introduce a notion of stability on the set of $G$-constellations. Given a finite subgroup $G\subset \SL(n,\C)$ (where $n=2,3$), the \textit{space of stability conditions} for $G$-constellations is 
	\[
	\Theta=\Set{ \theta\in\Hom_\Z(R(G),\Q)|\theta(\C[G])=0}
	\]
	\begin{definition}\label{stability} Let $\theta\in\Theta$ be a stability condition. A $G$-constellation $\Calf$ is said to be \textit{$\theta$-(semi)stable} if, for any proper $G$-equivariant subsheaf $0\subsetneq \Cale\subsetneq \Calf$, we have $$\theta(H^0(\A^n,\Cale)) \underset{(\ge)}{>}0.$$
		A stability condition $\theta$ is \textit{generic} if the notion of $\theta$-semistability is equivalent to the notion of $\theta$-stability. Finally, we   denote by $\Theta^{\gen}\subset\Theta$ the subset of generic stability conditions.
	\end{definition}
\begin{definition} A $G$-constellation $\Calf$ is \textit{indecomposable} if it cannot be written as a direct sum
	$$\Calf=\Cale_1\oplus \Cale_2,$$
	where $\Cale_1,\Cale_2$ are proper $G$-subsheaves, and it is \textit{decomposable} otherwise.
\end{definition}
\begin{remark}
	If we think of a $ G $-constellation as its space of global sections, a $G$-constellation $F=H^0(\Calf,\A^n)$ is indecomposable if it cannot be written as a direct sum
	$$F=E_1\oplus E_2,$$
	where $E_1,E_2$ are proper $G$-equivariant $\C[x_1,\ldots,x_n]$-submodules.
\end{remark}
	\begin{remark} If $\Calf$ is decomposable, then it is not $\theta$-stable for any stability condition $\theta\in\Theta$. 
		
		Since, for our purpose, we are interested in indecomposable $G$-constellations, whenever not specified a $G$-constellation will always be indecomposable.
	\end{remark}
\begin{remark}\label{frestable} If $Z\subset\A^n$ is a free orbit, then $\Calo_Z$ does not admit any proper $G$-subsheaf. Therefore, it is $\theta$-stable for all $\theta\in\Theta$.
\end{remark}
	\begin{definition}  Let $\theta\in\Theta^{\gen}$ be a generic stability condition. We denote by $\Calm_\theta$ the (fine) moduli space of $\theta$-stable $G$-constellations.
	\end{definition}
The theorem below brings together results from \cite{CRAWISHII,BKR,prova2}.

	\begin{theorem}\label{CRAWthm} The following results are true for $n=2,3$.
		\begin{itemize}
			\item The subset $\Theta^{\gen}\subset\Theta$ of generic parameters is open and dense.
			It is the disjoint union of finitely many open convex polyhedral cones in $\Theta$ called \textit{chambers}.
			\item For generic $\theta\in\Theta^{\gen}$, the moduli space $\Calm_\theta$ exists and it depends only upon the chamber $C\subset\Theta^{\gen}$ containing $\theta$, so we write $\Calm_C$ in place of $\Calm_\theta$ for any $\theta\in C$. Moreover, the Hilbert--Chow morphism, which associates to each $G$-constellation $\Calf$ its support $\Supp(\Calf)$, $\varepsilon\colon \Calm_C\rightarrow \A^n/G$, is a crepant resolution.
			\item(Craw--Ishii Theorem \cite{CRAWISHII}) Given a finite abelian subgroup $G\subset\SL(n,\C)$, suppose $Y\xrightarrow{\varepsilon} \A^n/G$ is a projective crepant resolution. Then $Y \cong \Calm_C$ for some chamber $C\subset\Theta$ and $\varepsilon=\varepsilon_C$ is the Hilbert-Chow morphism.
			\item(Yamagishi Theorem \cite{prova2}) Given a finite subgroup $G\subset\SL(n,\C)$, suppose $Y\xrightarrow{\varepsilon} \A^n/G$ is a projective crepant resolution. Then $Y \cong \Calm_C$ for some chamber $C\subset\Theta$.
			\item There exists a chamber $C_G\subset \Theta^{\gen}$ such that $\Calm_{C_G}=G$-$\Hilb(\A^n)$.			
		\end{itemize}
	\end{theorem}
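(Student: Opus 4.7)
The plan is to present this statement as a compilation of known results, each bullet requiring only a citation with a brief bridging remark rather than an original argument.

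For the first bullet, I would use the standard King-type wall-and-chamber picture. Since $H^0(\A^n,\Calf)\cong \C[G]$ is a fixed finite-dimensional representation, any $G$-equivariant subsheaf $\Cale\subsetneq \Calf$ has $H^0(\A^n,\Cale)$ in one of only finitely many isomorphism classes of subrepresentations of $\C[G]$, and each such class imposes a single linear condition $\theta(H^0(\A^n,\Cale))=0$ on $\Theta$. Hence $\Theta^{\gen}$ is the complement of a locally finite hyperplane arrangement inside the affine hyperplane $\{\theta(\C[G])=0\}$, which is open and dense, and whose connected components are open convex polyhedral cones. For the second bullet, chamber-independence of $\Calm_\theta$ is built into the GIT construction, because the (semi)stable locus is constant along any path in the interior of a fixed chamber. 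Existence of $\Calm_C$ follows from King's construction applied to the McKay quiver with relations encoding $G$-equivariant $\Calo_{\A^n}$-modules with dimension vector $(\dim\rho_1,\ldots,\dim\rho_s)$. The substantive statement is that $\varepsilon_C\colon \Calm_C\rightarrow \A^n/G$ is a crepant resolution: for $n=2$ this reduces, via uniqueness of the minimal resolution of the $A_{k-1}$ singularity, to verifying that $\Calm_C$ is smooth, projective over $\A^n/G$, and birational; for $n=3$ it is the combined content of \cite{BKR} for the $G$-$\Hilb$ chamber together with the Craw--Ishii and Yamagishi theorems cited in the next two bullets.

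For the third and fourth bullets I would simply import the Craw--Ishii and Yamagishi theorems from \cite{CRAWISHII} and \cite{prova2} respectively, since they are the main results of those papers. For the fifth bullet, I would exhibit an explicit stability condition $\theta_0$ with $\theta_0(\rho_0)<0$ and $\theta_0(\rho_i)>0$ for every nontrivial irreducible $\rho_i$, where $\rho_0$ is the trivial representation, normalized so that $\theta_0(\C[G])=0$. A $G$-constellation $\Calf$ is then $\theta_0$-stable if and only if the trivial summand of $H^0(\A^n,\Calf)$ generates $\Calf$ as a $\C[x_1,\ldots,x_n]$-module, i.e.\ if and only if $\Calf\cong \Calo_Z$ for some $G$-cluster $Z$. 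Taking $C_G$ to be any chamber containing a generic perturbation of $\theta_0$ yields $\Calm_{C_G}=G$-$\Hilb(\A^n)$. The only genuine obstacle in the whole theorem is the crepancy of $\varepsilon_C$ for arbitrary chambers; this is exactly the hard content of \cite{BKR,CRAWISHII,prova2}, and the present proof imports it rather than re-deriving it.
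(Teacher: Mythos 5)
Your proposal is correct and takes the same route as the paper: the paper offers no proof of this theorem at all, presenting it explicitly as a compilation of results imported from \cite{CRAWISHII}, \cite{BKR}, and \cite{prova2}, which is precisely your strategy. Your bridging remarks (the hyperplane-arrangement description of $\Theta^{\gen}$, the GIT/King construction, and the choice of $\theta_0$ with $\theta_0(\rho_0)<0$ and $\theta_0(\rho_i)>0$ for the $G$-$\Hilb$ chamber, consistent with the paper's later Example on $C_G$) are accurate and add detail the paper omits.
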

		We will adopt the same notation as \cite{CRAWISHII}  for the universal family of $C$-stable $G$-constellations, namely $\Calu_C\in\Ob\Coh(\Calm_C\times\A^n)$, and for the tautological bundle $\Calr_C:={(\pi_{\Calm_C})}_*\Calu_C$.
	\begin{remark} The hypothesis of \Cref{CRAWthm}, \Cref{freunic,frestable} imply, together with the third point of \Cref{CRAWthm}, that if we denote by $U_C=\Calm_C\smallsetminus\exc (\varepsilon_C)$ the complement of the exceptional locus of the Hilbert-Chow morphism then, for any two chambers $C,C'\subset\Theta^{\gen}$, then there is a canonical isomorphism of families over $\A^n/G$-schemes
		\begin{center}
			\begin{tikzpicture}
				\node at (0,0) {${\Calu_C}_{|_{U_{C}\times\A^n}}$};
				\node at (0,-1) {$U_C\times\A^n$};
				\draw (0,-0.2)--(0,-0.8);
				\node at (3,0) {${\Calu_{C'}}_{|_{U_{C'}\times\A^n}}$};
				\node at (3,-1) {$U_{C'}\times\A^n$,};
				\draw (3,-0.2)--(3,-0.8);
				
				\node at (1.5,-0.5) {$\cong$};
			\end{tikzpicture}
		\end{center}
	i.e. there exists a unique isomorphism $\varphi_C:U_C\rightarrow U_{C'}$ such that the diagram
\[
\begin{tikzcd}
U_C \arrow{rr}{\varphi} \arrow[swap]{dr}{{\varepsilon}_C} & & U_{C'}\arrow{dl}{\varepsilon_{C'}}\\
& \A^n/G &
\end{tikzcd}
\]
commutes and ${\Calu_C}_{|_{U_{C}\times\A^n}}\cong (\varphi\times \id_{\A^n})^*{\Calu_{C'}}_{|_{U_{C'}\times\A^n}}$.

	In particular, any $U_C$ parametrizes the free orbits of the $G$-action as the complement of the singular locus of $\A^n/G $ does.
	\end{remark}
 \section{The two-dimensional abelian case}
	In this section we  introduce some notation that we will use throughout the rest of the paper. Moreover, we give a very brief description of the singularities $ A_{|G|-1} $ and of their respective resolutions.
	
	Throughout all the section, we fix  a finite abelian subgroup $ G \subset \SL (n, \C) $.	
	\subsection{The action of $G$}\label{section2.1}
	Whenever $G\subset \SL(n,\C)$ is a finite abelian subgroup, it is well known that its irreducible representations are 1-dimensional and that the group $G$ and the set $\Irr(G)$ are in bijection. Moreover, the map $\Psi$ in \Cref{shur} is such that 
	\[
\begin{tikzcd}[row sep=tiny]
R(G) \arrow{r}{\Psi} & \underset{\rho\in\Irr(G)}{\bigoplus}\Z\rho\\
\C[G]\arrow[mapsto]{r} & (1,\ldots,1).
\end{tikzcd}
\]
In particular, in dimension 2, it is well known that all finite abelian subgroups $G\subset \SL(2,\C)$ are cyclic. Moreover, for any $k\ge1$, there is only one conjugacy class of abelian subgroups of $\SL(2,\C)$ isomorphic to $\Z/k\Z$. In what follows we will choose, as representative of such conjugacy class,
\begin{equation}\label{Zkaction}
    \Z/k\Z\cong G=\left< g_k=\begin{pmatrix}
		\xi_k^{-1}&0\\0&\xi_k
	\end{pmatrix} \right>\subset\SL(2,\C),
\end{equation}
	where $\xi_k$ is a (fixed) primitive $k$-th root of unity.
	
	We  adopt the following notation for the irreducible representations of $G$:
		\[ \Irr(G)=\Set{\begin{matrix}\begin{tikzpicture}
	    \node at (-0.9,0) {$\rho_i:$};
	    \node at (0,0) {$\Z/k\Z$};
	    \node at (1.5,0) {$\C^*$};
	    \node at (0,-0.5) {$g_k$};
	    \node at (1.5,-0.5) {$\xi_k^i$};
	    \draw[->] (0.5,0)--(1.2,0);
	    \draw[|->] (0.3,-0.5)--(1.2,-0.5);
	\end{tikzpicture}\end{matrix} |i=0,\ldots,k-1}. \]
	Sometimes, we will identify $\Irr(G)$ with the set $\{0,\ldots,k-1\}$ according to the bijection $\rho_j\mapsto j$. Notice that, one may also identify $(\Irr(G),\otimes)$ with the abelian group $(\Z/k\Z,+)$, but  in what follows we will mostly deal with $\Irr(G)$ as a set of indices, hence we will ignore the natural group structure on it.
	\subsection{The quotient singularity $\A^2/G$ and its resolution}\label{section22}
	The singularity obtained in this case is the  so-called $A_{k-1}$ singularity, i.e.
	$$A_{k-1}:=\A^2/G.$$
	This is a rational double point. It is well known that it has a unique minimal, in fact crepant, resolution $Y\xrightarrow{\varepsilon}A_{k-1}$ whose exceptional divisor is a chain of $k-1$ smooth $(-2)$-rational projective curves.
	
	As a consequence of \Cref{CRAWthm} and of the uniqueness of the minimal model of a surface, for any chamber $C$, there is an isomorphism of varieties $\varphi_C:\Calm_C\xrightarrow{\sim} Y$ such that the diagram
\[
\begin{tikzcd}
\Calm_C \arrow{rr}{\varphi_C} \arrow[swap]{dr}{{\varepsilon}_C} & & Y\arrow{dl}{\varepsilon}\\
& A_{k-1}&
\end{tikzcd}
\]
	commutes. What changes between two different chambers $C,C'$ is that they have different universal families $\Calu_C,\Calu_{C'}\in\Ob\Coh(Y\times\A^2)$.
 \section{Toric \texorpdfstring{$G$}{}-constellations}
	This section is devoted to the study of toric $G$-constellations, i.e. those $G$-constellations which, in addition to being $ G $-sheaves, are also $ \T^2 $-sheaves. As it usually happens when dealing with $ \T^2 $-modules, we will see that the $ \C [x, y] $-module structure of a toric $G$-constellation is fully described in terms of combinatorial objects, namely the skew Ferrers diagrams.
	
	This way of proceeding in the description of a $ \T^2 $-module is not new, and it is actually adopted very often in the literature; for example in the study of monomial ideals (see \cite{BRIANCON}) or, more generally, in the study of $ \T^2 $-modules of finite length (see \cite{ANDREA}).
	
	Although many statements can be generalized to higher dimension, from now on we will focus on the 2-dimensional case. 
	\subsection{The torus action}
	Recall that $\A^2$ is a toric variety via the standard torus action:

\begin{equation}\label{eq:sttorusaction}
\begin{tikzcd}[row sep=tiny]
\mathbb{T}^2\times\A^2 \arrow{r} & \A^2 \\
((\sigma_1,\sigma_2),(x,y))\arrow[mapsto]{r} & (\sigma_1\cdot x ,\sigma_2\cdot y).
\end{tikzcd}
\end{equation}

	Notice that, under our assumptions, $G$ is a finite subgroup of the torus $\T^2$. Hence, the action of $\T^2$ commutes with the action of the finite abelian (diagonal) subgroup $G\subset\T^2$. 
	
	This implies that, given a $\theta$-stable $G$-constellation $\Calf$ and an element $\sigma\in \mathbb T^2 $, the pullback $\sigma^*\Calf$ is a $\theta$-stable $G$-constellation. Indeed, $\sigma^*$ induces an isomorphism between the global sections of $\sigma^*\Calf$ and $\Calf$ and hence, $\dim H^0(\A^2,\sigma^*\Calf)=k$. Moreover, $\sigma^*\Calf$ is still a $G$-sheaf if we define, for all $ g\in G$, the morphisms $\lambda^{\sigma^*\Calf}_g:\sigma^*\Calf\rightarrow g^*\sigma^*\Calf$ as
	$$\lambda^{\sigma^*\Calf}_g=\sigma^*\lambda^{\Calf}_g.$$
	 Such morphisms are well defined because $\sigma^*$ and $g^*$ commute, i.e. $g^*\sigma^*\Calf\cong\sigma^*g^*\Calf$ for all $ (g,\sigma)\in G\times\T^2$.  Finally, we have to check that $\sigma^*\Calf$ is $\theta$-stable. This follows from the fact that both the groups $G\subset \T^2$ act diagonally and, as a consequence, if $\Cale\subset \Calf$ is a proper $G$-subsheaf and
	 $$H^0(\A^2,\Cale)=\underset{j=1}{\overset{r}{\bigoplus}}\rho_{i_j}$$ as representations, then $\sigma^*\Cale\subset\sigma^*\Calf$ is a proper $G$-subsheaf and
	 $$H^0(\A^2,\sigma^*\Cale)=\underset{j=1}{\overset{r}{\bigoplus}}\rho_{i_j}$$as representations.
	\begin{definition} As explained above, the torus $\T^2$ acts on $\Calm_C$, for any chamber $C$. We  say that a (indecomposable) $G$-constellation $\Calf$  is \textit{toric} if it corresponds to a torus fixed point.
	\end{definition}
\begin{remark}\label{toric torus}  A $G$-constellation $\Calf$ is toric if and only if it    admits  a structure of $\T^2$-sheaf.  Indeed, if $\Calf$ is a torus fixed point  one possible $\T^2$-structure is obtained from the following associations \[
\begin{tikzcd}[row sep=tiny]
\T^2\times H^0(\A^2,\Calf) \arrow{r} & H^0(\A^2,\Calf)& \\
(\sigma,p)\arrow[mapsto]{r} & p\circ \sigma^{-1},
\end{tikzcd}
\] for  $\sigma\in\T^2$ acting  on $\A^2$ as in \eqref{eq:sttorusaction}. We stress that the $\T^2$-equivariant structure on $\Calf$ is not unique. Indeed any such structure can be twisted by characters of $\mathbb T^2$. 
\end{remark}
\begin{definition} We say that a $G$-constellation $\Calf$ is \textit{nilpotent} if the endomorphisms $x\cdot $ and $y\cdot$ of the $\C[x,y]$-module $H^0(\A^2,\Calf)$ are nilpotent.
	\end{definition}
	\begin{remark}\label{suppnilp} A $G$-constellation $\Calf$ is supported at the origin $0\in\A^2$ if and only if it is nilpotent. This follows from the relation between the annihilator of a $\C[x,y]$-module and the support of the sheaf associated to it (see \cite[Section 2.2]{EISENBUD}). Moreover, \Cref{CRAWthm} implies that nilpotent $C$-stable $G$-constellations correspond to points of the exceptional locus of the crepant resolution $\Calm_C$.
	\end{remark}
\begin{remark}\label{modrep} Given a $G$-constellation $F=H^0(\A^2,\Calf)$, we can compare its structures of $G$-representation and of $\C[x,y]$-module. Looking at the induced action of $G$ on $\C[x,y]$, it turns out that, if $s\in\rho_i$ via the isomorphism $F\cong\C[G]$ then:
$$x\cdot s\in\rho_{i+1},$$
and,
$$y\cdot s\in\rho_{i-1}.$$
\end{remark}
	\begin{prop}\label{xyfazero} If $F=H^0(\A^2,\Calf)$ is a nilpotent $G$-constellation then the endomorphism $xy\cdot$ is the zero endomorphism. 
	\end{prop}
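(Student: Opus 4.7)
The plan is to exploit the interplay between the $G$-isotypic decomposition of $F$ and its $\C[x,y]$-module structure. The key observation, based on \Cref{modrep}, is that the operator $xy\cdot$ is $G$-equivariant of weight zero: multiplication by $x$ sends the $\rho_i$-summand to the $\rho_{i+1}$-summand and multiplication by $y$ sends $\rho_i$ to $\rho_{i-1}$, so the composition $xy\cdot$ preserves each isotypic component.

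First I would write the decomposition $F=\bigoplus_{i=0}^{k-1}F_i$ coming from the isomorphism $F\cong \C[G]$ of $G$-representations, where $F_i$ denotes the $\rho_i$-isotypic component. Because $G$ is a finite abelian group all of whose irreducible representations are one-dimensional, and because the regular representation contains each $\rho_i$ with multiplicity one, every $F_i$ is a line. The observation above then implies that $xy\cdot$ restricts to an endomorphism of each $F_i$, and hence acts on $F_i$ as a scalar $\lambda_i\in\C$.

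Next I would invoke the nilpotency hypothesis: since $x\cdot$ and $y\cdot$ commute (as $\C[x,y]$ is commutative) and both are nilpotent by assumption, their product is nilpotent as well — indeed, if $x^N\cdot = 0$, then $(xy)^N\cdot = x^N y^N\cdot = 0$. A scalar operator on a one-dimensional space is nilpotent if and only if it is zero, so $\lambda_i=0$ for every $i$, and therefore $xy\cdot$ vanishes on all of $F$. I do not anticipate any real obstacle here; the only subtle point is that the proof depends crucially on $F$ being isomorphic to the regular representation, which forces each weight space to be one-dimensional. For a general $G$-equivariant nilpotent $\C[x,y]$-module the conclusion would fail, so the argument really does use the full strength of the constellation hypothesis.
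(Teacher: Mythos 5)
Your proposal is correct and follows essentially the same route as the paper's proof: both use the fact that each isotypic component of $F\cong\C[G]$ is one-dimensional, that $xy\cdot$ preserves these components by the weight computation of \Cref{modrep}, hence acts as a scalar on each, and that nilpotency forces each scalar to vanish. Your explicit justification that $(xy\cdot)^N=(x\cdot)^N(y\cdot)^N=0$ merely spells out a step the paper leaves implicit.
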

	\begin{proof}The $G$-constellation $F $ is a $k$-dimensional $\C$-vector space. Let us pick a basis $$\{v_0,\ldots,v_{k-1}\}$$ of $F$ such that, for all $i=0,\ldots,k-1$, $v_i\in\rho_i$ under the isomorphism $F\cong\C[G]$. As in \Cref{modrep}, for all $ i=0,\ldots,k-1$, we have:
		$$x\cdot v_i\in\rho_{i+1},$$
		and,
		$$y\cdot v_i\in\rho_{i-1}$$
		where the indices are thought modulo $k$. In other words,
		$$x\cdot v_i\in\Span(v_{i+1})\mbox{ and }y\cdot v_i\in\Span(v_{i-1}).$$
		Therefore, we get:
		$$xy\cdot v_i\in\Span(v_i),\quad \forall i=0,\ldots,k-1$$
		i.e.
		$$xy\cdot v_i=\alpha_iv_i,\mbox{ with }\alpha_i\in\C,\quad \forall i=0,\ldots,k-1.$$
		Now, the nilpotency hypothesis implies that $\alpha_i=0$ for all $i=0,\ldots,k-1$.
	\end{proof}

	\begin{remark}\label{toricisnilp}If a $G$-constellation $F=H^0(\A^2,\Calf)$ is toric, then it is also nilpotent. Indeed, following the same logic as in the proof of \Cref{xyfazero} we have
		$$x^k\cdot v_i=\alpha_i v_i,\mbox{ with }\alpha_i\in\C, \quad\forall i=0,\ldots,k-1,$$
		but torus equivariancy implies $\alpha_i =0$ for all $i=0,\ldots,k-1$.
	\end{remark}
	\subsection{Skew Ferrers diagrams and $G$-stairs}\label{section 32}
	The advantage of working with toric $G$-constellations is that their spaces of global sections can be described in terms of monomial ideals whose data are described by means of combinatorial objects.
	
	We can associate, to each element of the natural plane $\N^2$, two labels: namely a monomial and an irreducible representation. We achieve this by saying that \textit{a polynomial $p\in\C[x,y]$ belongs to an irreducible representation $\rho_i$} if 
	$$\forall g\in G,\quad  g \cdot p=\rho_i(g)p  $$
	i.e. $p$ is an eigenfunction for the linear map $g\cdot$ with the complex number $\rho_i(g)$ as eigenvector. In particular, with the notations in \Cref{section2.1}, the monomial $x^iy^j$ belongs to the irreducible representation $\rho_{i-j}$ of the abelian group $G$, where the index is tought modulo $k$. According to this association, we can define the \textit{representation tableau $\Calt_G$} as 
\[\Calt_G=\Set{(i,j,t)\in\N^2\times \Irr(G)|i-j\equiv t\ (\mod k\ )}\subset \N^2\times \Irr(G).\]
	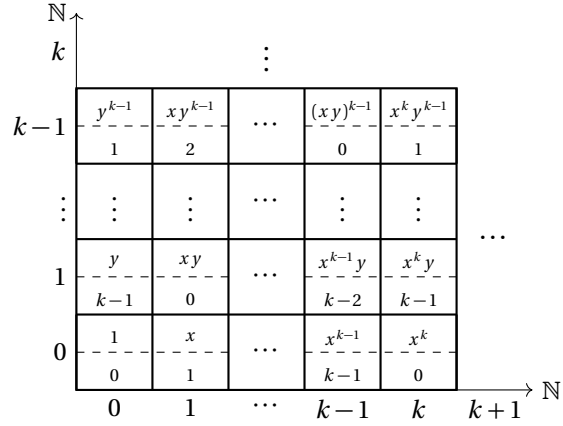
\begin{figure}[H]
		\begin{tikzpicture}
			\node[left] at (0,5) {\small$\N$};
			\node[right] at (6,0) {\small$\N$};
			\draw[<->] (0,5)--(0,0)--(6,0);
			\node at (5.5,2) {$\cdots$};
			\node at (2.5,4.5) {$\vdots$};
			\Quadrant{5}{3}{4}
			\newcommand{\y}{4}
			\draw[dashed] (3,0.5)--(5,0.5);
			\draw[dashed] (3,1.5)--(5,1.5);
			\draw[dashed] (0,0.5)--(2,0.5);
			\draw[dashed] (0,1.5)--(2,1.5);
			
			\draw[dashed] (3,3.5)--(5,3.5);
			\draw[dashed] (0,3.5)--(2,3.5);
			\node at (0.5,0.2) {\tiny$0$};
			\node at (1.5,0.2) {\tiny$1$};
			\node at (2.5,0.5) {\small$\cdots$};
			\node at (0.5,0.7) {\tiny$1$};
			\node at (1.5,0.7) {\tiny$x$};
			\node at (4.5,0.7) {\tiny$x^k$};
			\node at (3.5,0.7) {\tiny$x^{k-1}$};
			\node at (3.5,1.7) {\tiny $x^{k-1}y$};
			\node at (4.5,0.2) {\tiny$0$};
			\node at (3.5,0.2) {\tiny$k-1$};
			\node at (1.5,1.7) {\tiny$xy$};
			\node at (4.5,1.7) {\tiny$x^ky$};
			
			\node at (0.5,1.7) {\tiny$y$};
			\node at (0.5,1.2) {\tiny$k-1$};
			\node at (1.5,1.2) {\tiny$0$};
			\node at (2.5,1.5) {\small$\cdots$};
			\node at (3.5,1.2) {\tiny$k-2$};
			\node at (4.5,1.2) {\tiny$k-1$};
			
			\node at (0.5,3.7) {\tiny$y^{k-1}$};
			\node at (1.5,3.7) {\tiny$xy^{k-1}$};
			\node at (4.5,3.7) {\tiny$x^ky^{k-1}$};
			\node at (3.5,3.7) {\tiny$(xy)^{k-1}$};
			
			\node at (0.5,2.5) {\small$\vdots$};
			\node at (1.5,2.5) {\small$\vdots$};
			\node at (2.5,2.5) {\small$\cdots$};
			\node at (3.5,2.5) {\small$\vdots$};
			\node at (4.5,2.5) {\small$\vdots$};
			
			\node at (0.5,3.2) {\tiny$1$};
			\node at (1.5,3.2) {\tiny$2$};
			\node at (2.5,3.5) {\small$\cdots$};
			\node at (3.5,3.2) {\tiny$0$};
			\node at (4.5,3.2) {\tiny$1$};
			\node[below] at (0.5,0) {\small 0};
			\node[below] at (1.5,0) {\small 1};
			\node[below] at (2.5,0) {\small $\cdots$};
			\node[below] at (3.5,0) {\small $k-1$};
			\node[below] at (4.5,0) {\small $k$};
			\node[below] at (5.5,0) {\small $k+1$};

			\node[left] at (0,0.5) {\small 0};
			\node[left] at (0,1.5) {\small 1};
			\node[left] at (0,2.5) {\small $\vdots$};
			\node[left] at (0,3.5) {\small $k-1$};
			\node[left] at (0,4.5) {\small $k$};
			
		\end{tikzpicture}
\caption{The representation tableau $\Calt_G$.}
\label{Tableau}
\end{figure}
Notice that the labeling with the representation is superfluous because the first projection
$$\pi_{\N^2}:\Calt_G\rightarrow \N^2 $$
is a bijection. In any case, this notation is useful to keep in mind that we are dealing with the representation structure as well as with the module structure.

In summary, the representation tableau has the property that
	\begin{center}
		\textit{moving to the right ``increases" the irreducible representation by 1 $  \pmod k  $}\\
		\textit{moving up ``decreases" the irreducible representation by 1 $  \pmod k  $.}
	\end{center}
\begin{definition}
	A \textit{Ferrers diagram (Fd)} is a subset $A$ of the natural plane $\N^2$ such that
	$$(\N^2\smallsetminus A)+\N^2\subset (\N^2\smallsetminus A)$$
	i.e. there exist $s\ge 0$ and $t_0\ge \cdots\ge t_s\ge 0$ such that
	\[
	A=\Set{(i,j)|i=0,\ldots,s \mbox{ and }\ j=0,\ldots,t_i}.
	\]
\end{definition}
\begin{remark} In the literature there is some ambiguity about the name to be given to such diagrams. Indeed, sometimes, they are also called Young tableaux and, by Ferrers diagrams, something else is meant (for some different notations, see for example \cite{FULTREP,ANDREWS}). In any case, we will adopt the notation in \cite{FEDOU}.
\end{remark}
	Pictorially, we see $s$ consecutive columns of weakly decreasing heights. An example is depicted in \Cref{figure2}.
	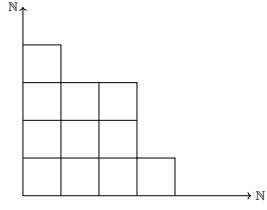
\begin{figure}[H]\scalebox{0.5}{
			\begin{tikzpicture}
				\node[left] at (0,5) {\small$\N$};
				\node[right] at (6,0) {\small$\N$};
				\draw[<->] (0,5)--(0,0)--(6,0);
				\draw[-] (0,4)--(1,4)--(1,3)--(3,3)--(3,1)--(4,1)--(4,0);
				\draw (0,3)--(1,3) --(1,0);
				\draw (0,2)--(3,2);
				\draw (2,3)--(2,0);
				\draw (0,1)--(3,1)--(3,0);
		\end{tikzpicture}}
\caption{An example of Fd where $s=3,t_0=3,t_1=2,t_2=2,t_3=0$.}
\label{figure2}
\end{figure}
\begin{remark}\label{sFdmodulestructre}
	We briefly recall that, starting from a Ferrers diagram $A$, we can build a torus-invariant zero-dimensional subscheme $Z$ of $\A^2$. Indeed, if $B=\N^2\smallsetminus A$ is the complement of $A$, then
	\[I_Z=\Set{x^{b_1}y^{b_2}|(b_1,b_2)\in B}
	\]
	is the ideal of the above mentioned subscheme $Z\subset \A^2$. In particular, the $\C[x,y]$-module structure of $H^0(\A^2,\Calo_Z)=\C[x,y]/I_Z$ is encoded in the Fd, by saying that a box, labeled by the monomial $m\in\C[x,y]$, corresponds to the one-dimensional vector subspace of $H^0(\A^2,\Calo_Z)$ generated by $m$, and
	\begin{center}
		\textit{moving to the right in the Fd is the multiplication by $x$}\\
		\textit{moving up in the Fd is the multiplication by $y$.}
	\end{center}
\end{remark}
\begin{definition}
 Let $\Gamma\subset \N^2$ be a subset of the natural plane. We will say that $\Gamma $ is a  \textit{skew Ferrers diagram (sFd)} if there exist two Ferrers diagrams $\Gamma_1,\Gamma_2\subset\N^2$ such that $\Gamma=\Gamma_1\smallsetminus\Gamma_2$. 
	
	Moreover, we will say that a sFd $\Gamma$ is \textit{connected} if, for any decomposition
	$$\Gamma=\Gamma_1\cup\Gamma_2$$ 
	as disjoint union, there are at least a box in $\Gamma_1$ and a box in $\Gamma_2$ which share an edge.
\end{definition}
	\begin{lemma}\label{lemmatec} 
 Let $A_1,A_2\subset \N^2$ be two Ferrers diagrams and let $\Gamma\subset \N^2$ be the skew Ferrers diagram $\Gamma=A_1\smallsetminus A_2$. Consider, for $i=1,2$, the ideals 
 \[
		I_{A_i}=\left( \Set{x^{b_1}y^{b_2}\in\C[x,y]|(b_1,b_2)\in \N^2\smallsetminus A_i}\right).
 \]
 Then, the isomorphism class of the torus equivariant $\C[x,y]$-module 
$$M_\Gamma=I_{A_2}/I_{A_2}\cap I_{A_1}=I_{A_2}/I_{A_2\cup A_1},$$
 is independent of the choice of $A_1,A_2$. Equivalentely, for any other choice of $A_1',A_2'\subset \N^2$ such that  $\Gamma=A_1\smallsetminus A_2$, the torus equivariant $\C[x,y]$-modules 
$M_\Gamma$ and $I_{A_2'}/I_{A_2'\cup A_1'}$ are isomorphic.  
	\end{lemma}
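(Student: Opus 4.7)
The plan is to realise $M_\Gamma$ as a $\T^2$-equivariant $\C[x,y]$-module whose underlying vector space has a canonical monomial basis indexed by the boxes of $\Gamma$, and whose $x$- and $y$-actions depend only on which neighbours of each box lie in $\Gamma$. Since this data is intrinsic to $\Gamma$, independence of the decomposition $\Gamma=A_1\smallsetminus A_2$ will be automatic.

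First I would reduce the quotient to a simpler form. Because $I_{A_1}$ and $I_{A_2}$ are monomial ideals, their intersection is the monomial ideal spanned by the monomials $x^{b_1}y^{b_2}$ with $(b_1,b_2)\notin A_1\cup A_2$; and since the downward-closedness property $(\N^2\smallsetminus A)+\N^2\subset \N^2\smallsetminus A$ is preserved under unions, $A_1\cup A_2$ is itself a Ferrers diagram. This justifies the identity $I_{A_2}\cap I_{A_1}=I_{A_1\cup A_2}$ appearing in the statement.

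Next, $M_\Gamma=I_{A_2}/I_{A_1\cup A_2}$ has, as a $\C$-vector space, a monomial basis indexed by those $(b_1,b_2)\in\N^2$ lying in $\N^2\smallsetminus A_2$ but not in $\N^2\smallsetminus(A_1\cup A_2)$, i.e.\ by $(A_1\cup A_2)\smallsetminus A_2=A_1\smallsetminus A_2=\Gamma$. I would then describe the action on this basis: for $(b_1,b_2)\in\Gamma$ the downward-closedness of $A_2$ forces $(b_1+1,b_2)\notin A_2$, so $x\cdot x^{b_1}y^{b_2}$ represents an element of $I_{A_2}$ whose class is zero in the quotient precisely when $(b_1+1,b_2)\notin A_1\cup A_2$, equivalently (since already $(b_1+1,b_2)\notin A_2$) when $(b_1+1,b_2)\notin\Gamma$. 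The symmetric analysis handles multiplication by $y$, and torus equivariance is automatic because the basis is a monomial basis, graded by bidegree.

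The conclusion is then immediate: the resulting module — basis indexed by $\Gamma$, with $x$ (resp.\ $y$) sending a basis vector to its right (resp.\ upper) neighbour when that neighbour belongs to $\Gamma$ and to zero otherwise — depends only on $\Gamma$. For any other decomposition $\Gamma=A_1'\smallsetminus A_2'$ the tautological identification of the two monomial bases yields an isomorphism of $\T^2$-equivariant $\C[x,y]$-modules. No step presents a serious obstacle; the only subtlety is the use of the Ferrers hypothesis on $A_2$ to exclude $(b_1+1,b_2)\in A_2$ and $(b_1,b_2+1)\in A_2$, without which the action might appear to depend on the decomposition.
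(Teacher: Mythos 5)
Your proof is correct and follows essentially the same route as the paper: both identify $I_{A_2}/I_{A_2}\cap I_{A_1}$ with a module having a monomial basis indexed by the boxes of $\Gamma$ and observe that the tautological identification of these bases (the class of $x^{b_1}y^{b_2}$ goes to the class of $x^{b_1}y^{b_2}$) is an isomorphism of torus-equivariant $\C[x,y]$-modules. You merely spell out the verifications the paper leaves implicit, in particular the identity $I_{A_1}\cap I_{A_2}=I_{A_1\cup A_2}$ and the use of downward-closedness of $A_2$ to see that the $x$- and $y$-actions depend only on $\Gamma$.
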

	\begin{proof} 		
		The fact that $M_\Gamma$ does not depend on the decomposition $\Gamma=A_1\smallsetminus A_2$ follows noticing that, if we pick another decomposition $\Gamma=A_1'\smallsetminus A_2'$, then the isomorphism of $\C$-vector spaces
		$$I_{A_2}/I_{A_2}\cap I_{A_1}\rightarrow I_{A_2'}/I_{A_2'}\cap I_{A_1'},$$
		which associates the class $x^\alpha y^\beta+ I_{A_2}\cap I_{A_1}$ to the class $x^\alpha y^\beta+ I_{A_2'}\cap I_{A_1'}$, is an isomorphism of $\C[x,y]$-modules. 
	\end{proof}

	Now, instead of focusing just on subsets of the natural plane $\N^2$, we introduce more structure by looking at subsets of the representation tableau.
	
	In some instances, we will need to work with abstract sFd's obtained forgetting about the monomials.
	\begin{definition}
		A \textit{$G$-sFd} is a subset $A\subset\Calt_G$ of the representation tableau whose image $\pi_{\N^2}(A)$, under the first projection
		$$\pi_{\N^2}:\Calt_G\rightarrow\N^2,$$
		is a sFd.
		
		An \textit{abstract $G$-sFd} is a diagram $\Gamma$ made of boxes labeled by the irreducible representations of $G$ that can be embedded into the representation tableau as a $G$-sFd.
	\end{definition}
		\begin{example} Consider the $\Z/3\Z$-action on $  \A^2$ defined in \eqref{Zkaction}. In \Cref{figure3} are shown an abstract $G$-sFd and two of its possible realizations as $G$-sFd.
			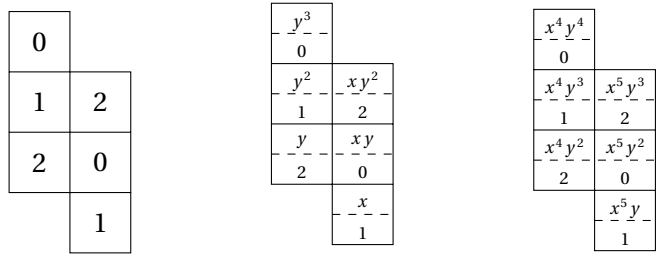
\begin{figure}[H]
				\begin{tikzpicture}[scale=0.8]
					\draw (0,2)--(0,4)-- (1,4)--(1,3)--(0,3);
					\draw (2,1)--(2,3)--(1,3)--(1,2)--(2,2);
					\draw (1,1)--(2,1)--(2,0)--(1,0)--(1,2)--(0,2)--(0,1)--(1,1);
					\node at (1.5,0.5) {1};
					\node at (1.5,1.5) {0};
					\node at (1.5,2.5) {2};
					\node at (0.5,1.5) {2};
					\node at (0.5,2.5) {1};
					\node at (0.5,3.5) {0};
				\end{tikzpicture}  
			\begin{tikzpicture}[scale=0.8]
				\node at (-2,0) {$\ $};
				\node at (4,0) {$\ $};
			\draw (0,2)--(0,4)-- (1,4)--(1,3)--(0,3);
			\draw (2,1)--(2,3)--(1,3)--(1,2)--(2,2);
			\draw (1,1)--(2,1)--(2,0)--(1,0)--(1,2)--(0,2)--(0,1)--(1,1);
			\draw[dashed] (1,0.5)--(2,0.5);
			\draw[dashed] (0,1.5)--(2,1.5);
			\draw[dashed] (0,2.5)--(2,2.5);
			\draw[dashed] (0,3.5)--(1,3.5);
			\node at (1.5,0.2) {\tiny$1$};
			\node at (1.5,1.2) {\tiny$0$};
			\node at (1.5,2.2) {\tiny$2$};
			\node at (0.5,2.2) {\tiny$1$};
			\node at (0.5,3.2) {\tiny$0$};
			\node at (0.5,1.2) {\tiny$2$};

			\node at (0.5,1.7) {\tiny$y$};
			\node at (1.5,1.7) {\tiny$xy$};
			\node at (1.5,2.7) {\tiny$xy^2$};
			\node at (0.5,2.7) {\tiny$y^2$};
			\node at (0.5,3.7) {\tiny$y^3$};
			\node at (1.5,0.7) {\tiny$x$};
		\end{tikzpicture}  
	\begin{tikzpicture}[scale=0.8]
	\draw (0,2)--(0,4)-- (1,4)--(1,3)--(0,3);
	\draw (2,1)--(2,3)--(1,3)--(1,2)--(2,2);
	\draw (1,1)--(2,1)--(2,0)--(1,0)--(1,2)--(0,2)--(0,1)--(1,1);
	\draw[dashed] (1,0.5)--(2,0.5);
	\draw[dashed] (0,1.5)--(2,1.5);
	\draw[dashed] (0,2.5)--(2,2.5);
	\draw[dashed] (0,3.5)--(1,3.5);
	\node at (1.5,0.2) {\tiny$1$};
	\node at (1.5,1.2) {\tiny$0$};
	\node at (1.5,2.2) {\tiny$2$};
	\node at (0.5,2.2) {\tiny$1$};
	\node at (0.5,3.2) {\tiny$0$};
	\node at (0.5,1.2) {\tiny$2$};
	
	\node at (0.5,1.7) {\tiny$x^4y^2$};
	\node at (1.5,1.7) {\tiny$x^5y^2$};
	\node at (1.5,2.7) {\tiny$x^5y^3$};
	\node at (0.5,2.7) {\tiny$x^4y^3$};
	\node at (0.5,3.7) {\tiny$x^4y^4$};
	\node at (1.5,0.7) {\tiny$x^5y$};
\end{tikzpicture}  
\caption{An abstract $\Z/3\Z$-sFd and two of its possible realizations as $\Z/3\Z$-sFd.}
\label{figure3}
\end{figure}
		On the other hand, the diagram in \Cref{figure4} is not an abstract $G$-sFd.
		\begin{figure}[H]
			\begin{tikzpicture}[scale=0.6]
				\draw (1,0)--(1,2)--(0,2)--(0,0)--(2,0)--(2,1)--(0,1);
				\node at (0.5,0.5) {0};
				\node at (1.5,0.5) {2};
				\node at (0.5,1.5) {2};
			\end{tikzpicture}  
\caption{\ }
\label{figure4}
\end{figure}
		\end{example}
	\begin{remark} Given any subset $\Xi$ of the representation tableau and any monomial $x^\alpha y^\beta$ we will denote by $x^\alpha y^\beta\cdot \Xi$ the subset of the representation tableau obtained by translating $\Xi$ $\alpha$ steps to the right and $\beta$ steps up. Notice that this is compatible with the association $\N^2\leftrightarrow\{\mbox{monomials in two variables}\}$ as explained in \Cref{sFdmodulestructre}.
	\end{remark}
\begin{lemma}\label{toric basis} If $\Calf$ is a torus equivariant $G$-constellation then there exists a basis $\{v_0,\ldots,v_{k-1}\}$ of $F=H^0(\A^2,\Calf)$ such that
	\begin{enumerate}
		\item for all $i=0,\ldots,k-1$, we have $v_i\in\rho_i$,
		\item for all $i=0,\ldots,k-1$, the sections $v_i$ are semi-invariant functions with respect some character $\chi_i$ of $\T^2$, i.e. $(a,b)\cdot v_i=\chi_i(a,b) v_i$ for all $(a,b)\in\T^2$,
		\item for all $i=0,\ldots,k-1$,
		$$\begin{cases}
			x\cdot v_i\in \{v_{i+1},0\},\\
			y\cdot v_i\in\{ v_{i-1},0\}.
		\end{cases}$$
	\end{enumerate}
\end{lemma}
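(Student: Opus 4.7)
The plan is to combine the joint eigenspace decomposition of $F$ under the commuting actions of $G$ and $\T^2$ with a cyclic rescaling argument driven by nilpotency and weight linear independence. Since $\Calf$ is a $\T^2$-fixed point, \Cref{toric torus} produces a $\T^2$-equivariant structure on $\Calf$, and because $G\subset\T^2$ sits diagonally the induced $G$- and $\T^2$-actions on $F=H^0(\A^2,\Calf)$ commute. The isomorphism $F\cong\C[G]=\bigoplus_{i=0}^{k-1}\rho_i$ then forces every $\rho_i$-isotypic component $F_i$ to be one-dimensional and $\T^2$-stable, hence a single $\T^2$-weight space. Choosing a nonzero $v_i\in F_i$ immediately yields (1) and (2), with $v_i$ of $\T^2$-weight $\chi_i$. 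By \Cref{modrep} the element $x\cdot v_i$ lies in $\rho_{i+1}=\Span(v_{i+1})$ and $y\cdot v_i$ in $\rho_{i-1}=\Span(v_{i-1})$, so one writes $x\cdot v_i=c_iv_{i+1}$ and $y\cdot v_i=d_iv_{i-1}$ for unique $c_i,d_i\in\C$. The goal then is to rescale $v_i\mapsto\lambda_iv_i$ (with $\lambda_i\in\C^*$, preserving (1) and (2)) so that $c_i,d_i\in\{0,1\}$.

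Under such a rescaling the new scalars become $\lambda_i\lambda_{i+1}^{-1}c_i$ and $\lambda_i\lambda_{i-1}^{-1}d_i$, so vanishing $c_i$ or $d_i$ remain $0$, while each nonvanishing one imposes a single multiplicative constraint on a consecutive ratio $\lambda_i/\lambda_{i+1}$. \Cref{xyfazero}, combined with \Cref{toricisnilp} (toric implies nilpotent), gives $c_id_{i+1}=0$, so the two possible constraints at the edge from $i$ to $i+1$ of the cyclic index set never collide. The key point is to exhibit an index $i_0$ with $c_{i_0}=0$ and $d_{i_0+1}=0$: at such an $i_0$ the corresponding ratio is unconstrained and can absorb the holonomy around the cycle, so setting $\lambda_{i_0+1}=1$ and propagating around step by step (uniquely determined when exactly one of $c_i,d_{i+1}$ is nonzero, freely when both vanish) produces a consistent solution $\lambda\in(\C^*)^k$.

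Such $i_0$ exists by a weight-telescoping argument. Writing $\omega_x,\omega_y$ for the $\T^2$-weights of the coordinates $x,y$ and supposing no such $i_0$ existed, the sets $A=\{i\mid c_i\neq 0\}$ and $B=\{i\mid d_{i+1}\neq 0\}$ partition $\Z/k\Z$; for $i\in A$ one has $\chi_{i+1}-\chi_i=\omega_x$ and for $i\in B$ one has $\chi_{i+1}-\chi_i=-\omega_y$, so the telescoping identity $\sum_{i=0}^{k-1}(\chi_{i+1}-\chi_i)=0$ yields $|A|\omega_x-|B|\omega_y=0$ with $|A|+|B|=k$, contradicting the linear independence of $\omega_x,\omega_y$ in $\Hom(\T^2,\C^*)\otimes_\Z\Q$. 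The main obstacle is exactly this step: the existence of a free vertex is where torus equivariance (which supplies the weight decomposition together with the rigidity $\omega_x\neq\omega_y$) and nilpotency through $xy=0$ (which prevents simultaneous nonzero $c_i$ and $d_{i+1}$) enter essentially.
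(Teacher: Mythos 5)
Your proof is correct, and it turns on the same two facts as the paper's own argument: the $\T^2$-weights of $x$ and $y$ are linearly independent, which forbids a closed cycle of nonzero structure constants, and $xy\cdot{}=0$ (from nilpotency), which forbids two nonzero constants on the same edge. The packaging, however, is genuinely different. The paper proves a general ``no cycles'' statement for chains of identities among a minimal generating set (the system \eqref{cycle}) and then only \emph{asserts} that the absence of cycles allows the bases $B_j$ of the cyclic submodules to be rescaled consistently. You instead work directly on the $k$-cycle indexed by $\Irr(G)$: the relation $c_id_{i+1}=0$ leaves at most one multiplicative constraint on each consecutive ratio $\lambda_i/\lambda_{i+1}$, the telescoped weight identity $|A|\,\omega_x-|B|\,\omega_y=0$ with $|A|+|B|=k\ge 1$ forces the existence of a free edge, and propagation of the $\lambda_i$ from that edge solves the system explicitly. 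Your route therefore supplies a complete, self-contained proof of the rescaling step that the paper leaves implicit; the paper's route yields the stronger no-cycle statement, which it reuses verbatim in the proof of \Cref{const-sFd}. The one point you treat at the same informal level as the paper is the opening claim that the isotypic lines $F_i$ are $\T^2$-stable (equivalently, that the $\T^2$-linearisation can be chosen to commute with the given $G$-structure); the paper likewise takes the existence of a basis satisfying (1) and (2) for granted.
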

\begin{proof} We can always pick a basis $\{\widetilde{v}_0,\ldots,\widetilde{v}_{k-1}\}$ which satisfies \textit{(1)} and \textit{(2)}. Moreover, it follows from \Cref{modrep} that:
	$$\begin{cases}
		x\cdot \widetilde{v}_i\in\Span(\widetilde{v}_{i+1}),\\
		y\cdot \widetilde{v}_i\in\Span(\widetilde{v}_{i-1}),
	\end{cases}$$
	where the indices are thought modulo $k$. The fact that $\Calf$ is toric implies that there are no \virg cycles", i.e. there are no $1<s<k$ and
	\[
	\Set{(i_j,k_j,h_j,\sigma_j)\in \Irr(G)\times \N^2\times \C^*|
\begin{array}{c}
j=1,\ldots,s,\\
i_j\not=i_{j'}\mbox{ for }j\not=j',\\
k_j+h_{j+1}>0
\end{array}}
	\]
	where the indices are thought modulo $s$, such that
\begin{equation}\label{cycle}\begin{cases}
		(x\cdot)^{k_{1}}\widetilde{v}_{i_1}&=\sigma_1(y\cdot)^{h_{2}}\widetilde{v}_{i_2},\\
		(x\cdot)^{k_{2}}\widetilde{v}_{i_2}&=\sigma_2(y\cdot)^{h_{3}}\widetilde{v}_{i_3},\\
		&\vdots\\
		(x\cdot)^{k_{{s-1}}}\widetilde{v}_{i_{s-1}}&=\sigma_{s-1}(y\cdot)^{h_{s}}\widetilde{v}_{i_s},\\
		(x\cdot)^{k_{s}}\widetilde{v}_{i_{s}}&=\sigma_s(y\cdot)^{h_{1}}\widetilde{v}_{i_1}.
	\end{cases}
\end{equation}
	Indeed, $x$ and $y$ are semi-invariant functions with respect to the characters
	\[
\begin{tikzcd}[row sep=tiny]
\T^2  \arrow{r}{\lambda_x} & \C^* \\
(a,b)\arrow[mapsto]{r} & a
\end{tikzcd}
\]
and\[
\begin{tikzcd}[row sep=tiny]
\T^2 \arrow{r}{\lambda_y} & \C^* \\
(a,b)\arrow[mapsto]{r} & b
\end{tikzcd}
\]
	of the torus $\T^2$. Then, if we act on both sides of the Equations \eqref{cycle} with some $(a,b)\in \T^2$, we get:

\begin{equation}\label{cycle1}
		\begin{cases}
		\lambda_x(a,b)^{k_{1}}\chi_{i_1}(a,b)(x\cdot)^{k_{1}}\widetilde{v}_{i_1}=\sigma_1\lambda_y(a,b)^{h_{2}}\chi_{i_2}(a,b)(y\cdot)^{h_{2}}\widetilde{v}_{i_2},\\
		\lambda_x(a,b)^{k_{2}}\chi_{i_2}(a,b)(x\cdot)^{k_{2}}\widetilde{v}_{i_2}=\sigma_2\lambda_y(a,b)^{h_{3}}\chi_{i_3}(a,b)(y\cdot)^{h_{3}}\widetilde{v}_{i_3},\\
		\quad\qquad \quad\quad\quad\quad\quad\quad \quad\vdots\\
		\lambda_x(a,b)^{k_{{s-1}}}\chi_{i_{s-1}}(a,b)(x\cdot)^{k_{{s-1}}}\widetilde{v}_{i_{s-1}}=\sigma_{s-1}\lambda_y(a,b)^{h_{s}}\chi_{i_s}(a,b)(y\cdot)^{h_{s}}\widetilde{v}_{i_s},\\
		\lambda_x(a,b)^{k_{{s}}}\chi_{i_s}(a,b)(x\cdot)^{k_{{s}}}\widetilde{v}_{i_{s}}=\sigma_s\lambda_y(a,b)^{h_{1}}\chi_{i_1}(a,b)(y\cdot)^{h_{1}}\widetilde{v}_{i_1},\\
	\end{cases}
\end{equation}
Now, the System \eqref{cycle1} is equivalent to:
	$$\begin{cases}
		a^{k_{1}}\chi_{i_1}(a,b)=b^{h_{2}}\chi_{i_2}(a,b),\\
		a^{k_{2}}\chi_{i_2}(a,b)=b^{h_{3}}\chi_{i_3}(a,b),\\
		\quad \quad\quad\quad\quad\ \ \ \vdots\\
		a^{k_{{s-1}}}\chi_{i_{s-1}}(a,b)=b^{h_{s}}\chi_{i_s}(a,b),\\
		a^{k_{{s}}}\chi_{i_s}(a,b)=b^{h_{1}}\chi_{i_1}(a,b),\\
	\end{cases}$$ 
	which is equivalent to
	\begin{equation}\label{cycle2}
	     a^{{k_1}+\cdots+{k_s}}=b^{h_{1}+\cdots+h_{s}}\quad \forall (a,b)\in\T^2.
	\end{equation}
	Finally, the only solution of \Cref{cycle2} is 
	$${{k_1}=\cdots={k_s}}={h_{1}=\cdots=h_{s}}=0,$$
	which contradicts the hypothesis $k_i+h_{i+1}>0$ for all $ i=1,\ldots,s$. 
	
	We are now ready to build the requested basis. Let $\{w_1,\ldots,w_\ell\}\subset\{\widetilde{v}_0,\ldots,\widetilde{v}_{k-1}\}$ be a minimal set of generators of the $\C[x,y]$-module $F$, i.e. the set
		\[
		\Set{w_j+\mm\cdot F\in F/\mm\cdot F | j=1,\ldots, \ell  }
		\]
		is a basis of the $\C$-vector space $F/\mm\cdot F$. Let us also denote by $F_j$, for $j=1,\ldots,\ell$, the submodule generated by $w_j$. We start by taking, for all $j=1,\ldots,\ell$, as basis of $F_j$ the set
	\[B_j=\Set{x^\alpha y^\beta w_j|\alpha\cdot\beta=0}. \]
	The problem is that in general the union of all $ B_j$'s is not a basis of $F$ because there can be some relations $x^\alpha w_i=\mu y^\beta w_j$ for $i\not=j$ and $\mu\in\C^*\smallsetminus 1 $. The fact that there are no cycles implies that we can re-scale all the elements in each $B_j$ obtaining new $\overline{B}_j$ so that $\underset{j}{\bigcup}\overline{B}_j$ is a basis of $F$ that verifies properties \textit{(1)}, \textit{(2)}, \textit{(3)}. 
\end{proof}
	\begin{prop}\label{const-sFd}
		Given a, possibly decomposable, torus equivariant $G$-constellation $F=H^0(\A^2,\Calf)$, there is (at least) one $G$-sFd whose associated $\C[x,y]$-module is a $G$-constellation isomorphic to $F$. 
	\end{prop}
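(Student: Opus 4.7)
The plan is to build the requested $G$-sFd directly from the internal combinatorics of $F$. First, I would apply \Cref{toric basis} to fix a basis $\{v_0,\ldots,v_{k-1}\}$ of $F$ in which $v_i\in\rho_i$ and the module maps satisfy $x\cdot v_i\in\{v_{i+1},0\}$ and $y\cdot v_i\in\{v_{i-1},0\}$. From this basis I would extract the \emph{multiplication forest} $\CG$ on the vertex set $\{v_i\}$, with an $x$-edge $v_i\to v_{i+1}$ whenever $x\cdot v_i=v_{i+1}$ and a $y$-edge $v_i\to v_{i-1}$ whenever $y\cdot v_i=v_{i-1}$. The no-cycles step in the proof of \Cref{toric basis} is exactly the statement that $\CG$ has no (undirected) cycles, so it is a forest whose connected components are indexed by a minimal set of generators $\{w_1,\ldots,w_\ell\}$ of $F$.

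Next, I would rigidly embed each tree of $\CG$ into the lattice $\N^2$: fix a root, assign it a position, and propagate by the rule that an $x$-edge moves one step to the right and a $y$-edge moves one step up. Internal consistency along any path in a tree is automatic because $v_i\in\rho_i$ forces the position $(a_i,b_i)$ of $v_i$ to satisfy $a_i-b_i\equiv i\pmod k$, and each edge modifies $a-b$ and $i$ by the same unit. This pins down each tree up to an overall translation. The nilpotency identity $xy=0$ from \Cref{xyfazero} immediately forbids $2\times 2$ blocks of occupied positions, ruling out the local shape-obstruction that would otherwise prevent a skew Ferrers structure.

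The main step, and the one I expect to be the most delicate, is choosing the translations of the individual trees so that their union $\Gamma\subset\N^2$ is a genuine skew Ferrers diagram. The strategy I have in mind is an inductive staircase procedure: totally order the generators $w_1,\ldots,w_\ell$ and, after placing the trees of $w_1,\ldots,w_j$ inside some Ferrers diagram $\Gamma_1^{(j)}$ whose set-theoretic complement $\Gamma_1^{(j)}\smallsetminus\Gamma$ is again a Ferrers diagram, append the tree of $w_{j+1}$ strictly above and to the right of $\Gamma_1^{(j)}$, exploiting the $k$-periodicity of the representation tableau (so that the required vertical shift is a multiple of $k$) to align the new labels correctly. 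Once such a $\Gamma$ has been produced, the isomorphism $F\xrightarrow{\sim}M_\Gamma$ is essentially tautological: send each $v_i$ to the class of the monomial at its position and apply \Cref{lemmatec} to read off the $\C[x,y]$-module structure, while the label constraint $a_i-b_i\equiv i\pmod k$ guarantees the $G$-equivariance of the resulting identification.
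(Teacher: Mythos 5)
Your overall strategy is the one the paper follows (toric basis, one combinatorial piece per indecomposable chunk, then translate the pieces into a common skew shape), but two steps go wrong as written. First, the connected components of your multiplication forest are \emph{not} in bijection with a minimal generating set $\{w_1,\ldots,w_\ell\}$: whenever $(x\cdot)^{k_j}w_j=(y\cdot)^{h_{j'}}w_{j'}\neq 0$, the generators $w_j$ and $w_{j'}$ lie in the \emph{same} tree, so one component can contain several generators. Controlling exactly which such identifications can occur (only at the extremal boxes of the two hooks, never in the interior) is the bulk of the paper's proof, and it is what forces each component to be a stair with possibly many generators. If you literally place ``the tree of $w_{j+1}$'' for each generator separately, you discard these identifications and the module $M_\Gamma$ you build is no longer isomorphic to $F$. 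You must order and place the components of the forest, not the generators.

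Second, your placement rule points in the wrong direction. In the paper's convention a skew Ferrers diagram is $A_1\smallsetminus A_2$ with both $A_i$ downward-closed staircases, so its disjoint connected components must descend to the right. Already for two single boxes the set $\{(0,0),(N,M)\}$ with $N,M>0$ is not a sFd: any Ferrers diagram $A_1$ containing $(N,M)$ contains $(1,0)$, and then $A_2=A_1\smallsetminus\{(0,0),(N,M)\}$ contains $(1,0)$ but not $(0,0)$, hence is not Ferrers. So appending each new tree ``strictly above and to the right'' never produces a sFd once there are at least two components. The paper's translation $\overline{\Gamma}_i=x^{k\sum_{j<i}\alpha_j}\,y^{k\sum_{j>i}\delta_j}\cdot\Gamma_i$ does the opposite: later components are pushed to the right while earlier ones are pushed up, so the pieces descend to the right while remaining inside $\N^2$. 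A smaller point: excluding $2\times 2$ blocks via $xy=0$ is not what makes each component a stair, since an ``up-right'' ribbon contains no $2\times 2$ block yet is not a sFd in this convention; what you actually need, and what your congruence $a_i-b_i\equiv i\pmod k$ already yields, is that no two boxes of one component are NE--SW diagonal neighbours.
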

	\begin{remark}\label{periodn} If we find one $G$-sFd with the required property, then there are infinitely many of them. Indeed, a special property of the representation tableau is that translations enjoy some periodicity properties.
		
		Let $\Gamma$ be a $G$-sFd, then:
		\begin{enumerate}
			\item multiplication by $x$ has period $k$, i.e there is an isomorphism of $\C[x,y]$-modules
			$$M_\Gamma\xrightarrow{\sim}M_{x^k\cdot\Gamma}$$
			which induces an isomorphism of representations between $M_\Gamma$ and $M_{x^k\cdot\Gamma}$;
			\item multiplication by $y$ has period $k$, i.e there is an isomorphism of $\C[x,y]$-modules
			$$M_\Gamma\xrightarrow{\sim}M_{y^k\cdot\Gamma}$$
			which induces an isomorphism of representations between $M_\Gamma$ and $M_{y^k\cdot\Gamma}$;
			\item multiplication by $xy$ is an isomorphism, i.e there is an isomorphism of $\C[x,y]$-modules
			$$M_\Gamma\xrightarrow{\sim}M_{xy\cdot\Gamma}$$
			which induces an isomorphism of representations between $M_\Gamma$ and $M_{xy\cdot\Gamma}$.
		\end{enumerate} 
In particular, all these $G$-sFd's correspond to the same abstract $G$-sFd.
	\end{remark}
	\begin{proof}( \textit{of \Cref{const-sFd}} ).
		Let  $\{v_0,\ldots,v_{k-1}\}$ be a $\C$-basis of $F$ with the properties listed in \Cref{toric basis}, and let $\set{w_j=v_{i_j}|j=1,\ldots,s}$ be a minimal set of generators of $F$ as a $\C[x,y]$-module (see the proof of \Cref{toric basis}). Denote by $F_j$, for $j=1,\ldots,s$, the $\C[x,y]$-submodule of $F$ generated by $w_j$. We can represent each $F_j$ by using diagrams of the form shown in \Cref{diagfja},
		\begin{figure}[ht]
				{\scalebox{0.7}{\begin{tikzpicture}
					\draw (0,2)--(0,0)--(3,0)--(3,1)--(1,1)--(1,3)--(0,3)--(0,2);
					\draw (0,4)--(0,5)--(1,5)--(1,4)--(0,4)--(0,5);
					\draw (4,0)--(5,0)--(5,1)--(4,1)--(4,0)--(5,0);
					\draw (0,2)--(1,2);
					\draw (2,0)--(2,1);
					\draw (0,1)--(1,1)--(1,0);
					\node at (0.5,0.5) {$w_j$};
					\node at (1.5,0.5) {$v_{i_j+1}$};
					\node at (2.5,0.5) {$v_{i_j+2}$};
					\node at (3.5,0.5) {$\cdots$};
					\node at (4.5,0.5) {$v_{i_j+k_j}$};
					
					\node at (0.5,1.5) {$v_{i_j-1}$};
					\node at (0.5,2.5) {$v_{i_j-2}$};
					\node at (0.5,3.6) {$\vdots$};
					\node at (0.5,4.5) {$v_{i_j-h_j}$};
			\end{tikzpicture}}}
			\caption{\ }
			\label{diagfja}
		\end{figure}
		where the integers $k_j$ and $h_j$ are defined by
		\[k_j=\max\Set{\alpha|(x\cdot)^\alpha w_j\not=0}\] and
		\[h_j=\max\Set{\alpha|(y\cdot)^\alpha w_j\not=0},\] 
		 and they are well defined because any toric $G$-constellation is nilpotent by \Cref{toricisnilp}.
		 
		 The $\C[x,y]$-module structure of $F_j$ is encoded in the fact that the multiplication by $ x $ (resp. $ y $) sends the generator of a box (i.e., the generator of the corresponding vector space) to the generator of the box on the right (resp. above). If there is no box on the right (resp. above) this means that the multiplication by $x$ (resp. $y$) is zero.
		 
		Now, we have to glue these diagrams to form the required $G$-sFd. We glue them along boxes with the same labels. First, notice that, if, for some $j\not=j'$ and $r,t\ge 1$, we have $(x\cdot)^rw_j=(x\cdot)^tw_{j'}$, i.e. $i_j+r=i_{j'}+t$ modulo $k$, then $$(x\cdot)^rw_j=(x\cdot)^tw_{j'}=0.$$ Indeed, if $r<t$ (the case $r\ge t$ is analogous) then, a representation argument (see \Cref{xyfazero}) tells us that  $w_j=(x\cdot)^{t-r}w_{j'}$ which, whenever $(x\cdot)^rw_i\not=0$, contradicts the minimality of the generating set $\{ w_1,\ldots,w_s\}$. Analogously, if, for some $j\not=j'$ and $r,t\ge 1$, we have $(y\cdot)^rw_j=(y\cdot)^tw_{j'}$, then $(y\cdot)^rw_j=0$.
		
		Now we show that, if, for some $j\not=j'$ and $r,t\ge 1$, we have $(x\cdot)^rw_j=(y\cdot)^tw_{j'}$, then $r=k_j$ and $t=h_{j'}$. Suppose, by contradiction, that there exists $1\le r<k_{j}$ such that $(x\cdot)^rw_j=(y\cdot)^{t}w_{j'}$ (the case $1\le t<h_{j'}$ is similar). In particular, the minimality assumption implies $t\ge 1$. Since $r<k_j$, by definition of $k_j$, we have $(x\cdot)^{r+1}w_i\not= 0$. Therefore, we get
		$$0\not=(x\cdot)^{r+1} w_j=x\cdot((x\cdot)^{r} w_j)= x\cdot y^{t}\cdot w_{j'}= (x y)\cdot y^{t-1}\cdot w_{j'}=0 $$
		which gives a contradiction.
		
		We show now that there are no ``cycles". Explicitly, suppose that, up to reordering the $v_i's$, and consequently the $w_i's$, we have already glued $\ell $ diagrams of the form depicted in \cref{diagfja} to a diagram of the form shown in \Cref{diagrammone}.
		\begin{figure}[ht]\scalebox{0.7}{
				\begin{tikzpicture}[scale=0.8]
					\draw (8,-6)--(8,-7)--(10,-7)--(10,-6)--(9,-6)--(9,-5)--(8,-5)--(8,-6);
					\draw (5,-3)--(5,-4)--(7,-4)--(7,-3)--(6,-3)--(6,-2)--(5,-2)--(5,-3);
					\draw (1,1)--(1,0)--(3,0)--(3,1)--(2,1)--(2,2)--(1,2)--(1,1);
					\draw (-2,4)--(-2,3)--(0,3)--(0,4)--(-1,4)--(-1,5)--(-2,5)--(-2,4);
					\draw (1,3)--(1,4)--(2,4)--(2,3)--(1,3)--(1,4);
					\draw (-2,6)--(-2,7)--(-1,7)--(-1,6)--(-2,6)--(-2,7);
					\draw (4,0)--(5,0)--(5,1)--(4,1)--(4,0)--(5,0);
					\draw (5,-1)--(6,-1)--(6,0)--(5,0)--(5,-1)--(6,-1);
					\draw (9,-3)--(9,-4)--(8,-4)--(8,-3)--(9,-3)--(9,-4);
					\draw (5,-3)--(6,-3)--(6,-4);
					\draw (1,1)--(2,1)--(2,0);
					\draw (-2,4)--(-1,4)--(-1,3);
					\node at (3.5,0.5) {$\cdots$};
					\node at (-1.5,5.6) {$\vdots$};
					\node at (5.5,-1.4) {$\vdots$};
					\node at (1.5,2.6) {$\vdots$};
					\node at (7.5,-3.5) {$\cdots$};
					\node at (5.5,0.5) {$\ddots$};
					\node at (-1.5,3.5) {$w_{1}$};
					\node at (1.5,0.5) {$w_{{2}}$};
					\node at (5.5,-3.5) {$w_{{\ell-1}}$};
					\node at (8.5,-6.5) {$w_{{\ell}}$};
					\node at (13.2,-4.5) {$x^{k_{{\ell}}}w_{\ell}$};
					\node[right] at (2,6.5) {$y^{h_{{1}}}w_{1}$};
					\node at (4.5,5) {${y^{h_{{2}}}w_{2} }{=}{  x^{k_{{1}}}w_{1} }$};
					\node at (10.5,-1.5) {${{  y^{h_{{\ell}}}w_{\ell} }{=}}{ x^{k_{{\ell-1}}}w_{{\ell-1}}}$};
					\draw[->] (4.5,4.8) to [out = 270 ,in =0 ]  (1.5,3.5);
					\draw[->] (10.2,-1.7) to [out = 270 ,in =0 ]  (8.5,-3.5);
					\draw[->] (13.2,-4.7) to [out = 270 ,in =0 ]  (11.5,-6.5);
					\node at (10.5,-6.5) {$\cdots$};
					\node at (0.5,3.5) {$\cdots$};
					\node at (8.5,-4.4) {$\vdots$};
					\draw (9,-7)--(9,-6)--(8,-6);
					\draw (11,-6)--(11,-7)--(12,-7)--(12,-6)--(11,-6)--(11,-7);
					\draw[<-] (-1.5,6.5)--(2,6.5);
			\end{tikzpicture}}
			\caption{\ }
			\label{diagrammone}
		\end{figure}
		Then, we want to show that there is no gluing $(x\cdot)^{k_{\ell}}w_{\ell}=\sigma(y\cdot)^{h_{1}}w_{1}$ for some $\sigma\in\C^*$, i.e. no gluing of the first and the last boxes of the above diagram. The presence of this cycle would translate into the following system of equalities
		$$\begin{cases}
			(x\cdot)^{k_{1}}w_{1}&=(y\cdot)^{h_{2}}w_{2},\\
			(x\cdot)^{k_{2}}w_{2}&=(y\cdot)^{h_{3}}w_{3},\\
			&\vdots\\
			(x\cdot)^{k_{{\ell-1}}}w_{{\ell-1}}&=(y\cdot)^{h_{\ell}}w_{\ell},\\
			(x\cdot)^{k_{\ell}}w_{{\ell}}&=\sigma(y\cdot)^{h_{1}}w_{1},
		\end{cases}$$
		which cannot be verified by any toric $G$-constellation as explained in the proof of \Cref{toric basis}.
		 
		So far we have proven that each connected component of the required $G$-sFd have the shape depicted in \Cref{shape}.
		\begin{figure}[ht]\scalebox{0.4}{
				\begin{tikzpicture}
					\draw (8,-6)--(8,-7)--(10,-7)--(10,-6)--(9,-6)--(9,-5)--(8,-5)--(8,-6);
					\draw (5,-3)--(5,-4)--(7,-4)--(7,-3)--(6,-3)--(6,-2)--(5,-2)--(5,-3);
					\draw (1,1)--(1,0)--(3,0)--(3,1)--(2,1)--(2,2)--(1,2)--(1,1);
					\draw (-2,4)--(-2,3)--(0,3)--(0,4)--(-1,4)--(-1,5)--(-2,5)--(-2,4);
					\draw (1,3)--(1,4)--(2,4)--(2,3)--(1,3)--(1,4);
					\draw (-2,6)--(-2,7)--(-1,7)--(-1,6)--(-2,6)--(-2,7);
					\draw (4,0)--(5,0)--(5,1)--(4,1)--(4,0)--(5,0);
					\draw (5,-1)--(6,-1)--(6,0)--(5,0)--(5,-1)--(6,-1);
					\draw (9,-3)--(9,-4)--(8,-4)--(8,-3)--(9,-3)--(9,-4);
					\draw (5,-3)--(6,-3)--(6,-4);
					\draw (1,1)--(2,1)--(2,0);
					\draw (-2,4)--(-1,4)--(-1,3);
					\node at (3.5,0.5) {$\cdots$};
					\node at (-1.5,5.6) {$\vdots$};
					\node at (5.5,-1.4) {$\vdots$};
					\node at (1.5,2.6) {$\vdots$};
					\node at (7.5,-3.5) {$\cdots$};
					\node at (5.5,0.5) {$\ddots$};
					\node at (10.5,-6.5) {$\cdots$};
					\node at (0.5,3.5) {$\cdots$};
					\node at (8.5,-4.4) {$\vdots$};
					\draw (9,-7)--(9,-6)--(8,-6);
					\draw (11,-6)--(11,-7)--(12,-7)--(12,-6)--(11,-6)--(11,-7);
			\end{tikzpicture}}
			\caption{\ }
			\label{shape}
		\end{figure}
	
		Moreover, if we forget about the reordering, each box  contains a label $v_i$ whose index increases by one when moving to the right or downward in the diagram. Since we have chosen $ v_i\in\rho_i $ for $i=0,\ldots,k-1$, this diagram fits in the representation tableau (see \Cref{section 32}), i.e. it is an abstract $G$-sFd. After performing all possible gluings, we obtain a number of abstract $G$-sFd's $A_1,\ldots,A_m$ whose shape is drawn in \Cref{shape}.
		
		The last thing to do is to show that we can realize $A_1,\ldots,A_m$ as subsets $\Gamma_1,\ldots,\Gamma_m$ of the representation tableau to get a $G$-sFd, i.e. in such a way that $$\pi_{\N^2}\left(\underset{i=1}{\overset{m}{\bigcup}}\Gamma_i\right)$$
		is a sFd. This can be done in many ways and we explain one possible way to proceed.
		
		We start by realizing $A_1,\ldots,A_m$ as disjoint $G$-sFd's $\Gamma_1,\ldots,\Gamma_m$. This can always be done because, as we observed, $A_1,\ldots,A_m$ are abstract $G$-sFd's and, from any choice of realizations $\widetilde{\Gamma}_1,\ldots,\widetilde{\Gamma}_m$ of them as non-necessarily disjoint $G$-sFd's, we can obtain disjoint $\Gamma_1,\ldots,\Gamma_m$ by performing the translations described in \Cref{periodn}.
		
		At this point, we have $m$ disjoint $G$-sFd's as described in \Cref{tris},
	\begin{figure}[ht]\scalebox{1}{
			\begin{tikzpicture}
				\node at (-4,0) {\scalebox{0.2}{
						\begin{tikzpicture}
					\draw (8,-6)--(8,-7)--(10,-7)--(10,-6)--(9,-6)--(9,-5)--(8,-5)--(8,-6);
					\draw (5,-3)--(5,-4)--(7,-4)--(7,-3)--(6,-3)--(6,-2)--(5,-2)--(5,-3);
					\draw (1,1)--(1,0)--(3,0)--(3,1)--(2,1)--(2,2)--(1,2)--(1,1);
					\draw (-2,4)--(-2,3)--(0,3)--(0,4)--(-1,4)--(-1,5)--(-2,5)--(-2,4);
					\draw (1,3)--(1,4)--(2,4)--(2,3)--(1,3)--(1,4);
					\draw (-2,6)--(-2,7)--(-1,7)--(-1,6)--(-2,6)--(-2,7);
					\draw (4,0)--(5,0)--(5,1)--(4,1)--(4,0)--(5,0);
					\draw (5,-1)--(6,-1)--(6,0)--(5,0)--(5,-1)--(6,-1);
					\draw (9,-3)--(9,-4)--(8,-4)--(8,-3)--(9,-3)--(9,-4);
							\draw (5,-3)--(6,-3)--(6,-4);
							\draw (1,1)--(2,1)--(2,0);
							\draw (-2,4)--(-1,4)--(-1,3);
							\node at (3.5,0.5) {$\cdots$};
							\node at (-1.5,5.6) {$\vdots$};
							\node at (5.5,-1.4) {$\vdots$};
							\node at (1.5,2.6) {$\vdots$};
							\node at (7.5,-3.5) {$\cdots$};
							\node at (5.5,0.5) {$\ddots$};
							\node at (10.5,-6.5) {$\cdots$};
							\node at (0.5,3.5) {$\cdots$};
							\node at (8.5,-4.4) {$\vdots$};
							\draw (9,-7)--(9,-6)--(8,-6);
							\draw (11,-6)--(11,-7)--(12,-7)--(12,-6)--(11,-6)--(11,-7);
				\end{tikzpicture}}};
				\node at (0,0) {\scalebox{0.2}{
						\begin{tikzpicture}
					\draw (8,-6)--(8,-7)--(10,-7)--(10,-6)--(9,-6)--(9,-5)--(8,-5)--(8,-6);
					\draw (5,-3)--(5,-4)--(7,-4)--(7,-3)--(6,-3)--(6,-2)--(5,-2)--(5,-3);
					\draw (1,1)--(1,0)--(3,0)--(3,1)--(2,1)--(2,2)--(1,2)--(1,1);
					\draw (-2,4)--(-2,3)--(0,3)--(0,4)--(-1,4)--(-1,5)--(-2,5)--(-2,4);
					\draw (1,3)--(1,4)--(2,4)--(2,3)--(1,3)--(1,4);
					\draw (-2,6)--(-2,7)--(-1,7)--(-1,6)--(-2,6)--(-2,7);
					\draw (4,0)--(5,0)--(5,1)--(4,1)--(4,0)--(5,0);
					\draw (5,-1)--(6,-1)--(6,0)--(5,0)--(5,-1)--(6,-1);
					\draw (9,-3)--(9,-4)--(8,-4)--(8,-3)--(9,-3)--(9,-4);
							\draw (5,-3)--(6,-3)--(6,-4);
							\draw (1,1)--(2,1)--(2,0);
							\draw (-2,4)--(-1,4)--(-1,3);
							\node at (3.5,0.5) {$\cdots$};
							\node at (-1.5,5.6) {$\vdots$};
							\node at (5.5,-1.4) {$\vdots$};
							\node at (1.5,2.6) {$\vdots$};
							\node at (7.5,-3.5) {$\cdots$};
							\node at (5.5,0.5) {$\ddots$};
							\node at (10.5,-6.5) {$\cdots$};
							\node at (0.5,3.5) {$\cdots$};
							\node at (8.5,-4.4) {$\vdots$};
							\draw (9,-7)--(9,-6)--(8,-6);
							\draw (11,-6)--(11,-7)--(12,-7)--(12,-6)--(11,-6)--(11,-7);
				\end{tikzpicture}}};
			\node at (6,0) {\scalebox{0.2}{
		\begin{tikzpicture}
					\draw (8,-6)--(8,-7)--(10,-7)--(10,-6)--(9,-6)--(9,-5)--(8,-5)--(8,-6);
					\draw (5,-3)--(5,-4)--(7,-4)--(7,-3)--(6,-3)--(6,-2)--(5,-2)--(5,-3);
					\draw (1,1)--(1,0)--(3,0)--(3,1)--(2,1)--(2,2)--(1,2)--(1,1);
					\draw (-2,4)--(-2,3)--(0,3)--(0,4)--(-1,4)--(-1,5)--(-2,5)--(-2,4);
					\draw (1,3)--(1,4)--(2,4)--(2,3)--(1,3)--(1,4);
					\draw (-2,6)--(-2,7)--(-1,7)--(-1,6)--(-2,6)--(-2,7);
					\draw (4,0)--(5,0)--(5,1)--(4,1)--(4,0)--(5,0);
					\draw (5,-1)--(6,-1)--(6,0)--(5,0)--(5,-1)--(6,-1);
					\draw (9,-3)--(9,-4)--(8,-4)--(8,-3)--(9,-3)--(9,-4);
		\draw (5,-3)--(6,-3)--(6,-4);
		\draw (1,1)--(2,1)--(2,0);
		\draw (-2,4)--(-1,4)--(-1,3);
		\node at (3.5,0.5) {$\cdots$};
		\node at (-1.5,5.6) {$\vdots$};
		\node at (5.5,-1.4) {$\vdots$};
		\node at (1.5,2.6) {$\vdots$};
		\node at (7.5,-3.5) {$\cdots$};
		\node at (5.5,0.5) {$\ddots$};
		\node at (10.5,-6.5) {$\cdots$};
		\node at (0.5,3.5) {$\cdots$};
		\node at (8.5,-4.4) {$\vdots$};
		\draw (9,-7)--(9,-6)--(8,-6);
		\draw (11,-6)--(11,-7)--(12,-7)--(12,-6)--(11,-6)--(11,-7);
	\end{tikzpicture}}};
\node at (3,0) {$\cdots$}; 
				\draw[<-] (1.3,-1.2) to [out=90,in=210] (1.6,-0.6);
				\draw[<-] (-2.7,-1.2) to [out=70,in=210] (-2.4,-0.6);
				\node[right] at (1.5,-0.5) {\small$x^{\alpha_2}y^{\beta_2}$};
				\node[right] at (-2.5,-0.5) {\small$x^{\alpha_1}y^{\beta_1}$};
				\draw[->] (-0.4,1.3) to [out=135,in=0] (-1.1,1.3);
				\node[right] at (-0.5,1.3) {\small$x^{\gamma_2}y^{\delta_2}$};
				
				\draw[->] (5.7,1.3) to [out=135,in=0] (4.9,1.3);
				\node[right] at (5.6,1.3) {\small$x^{\gamma_m}y^{\delta_m}$};
				\node at (-4,-2) {$\Gamma_1$};
				\node at (0,-2) {$\Gamma_2$};
				\node at (6,-2) {$\Gamma_m$};
		\end{tikzpicture}}
		\caption{\ }
		\label{tris}
	\end{figure}
		where just the labels of the boxes we are interested in are shown. The problem is that, in general, the union $\underset{i=1}{\overset{m}{\bigcup}}\Gamma_i$ is not a $G$-sFd, i.e. $\pi_{\N^2}\left(\underset{i=1}{\overset{m}{\bigcup}}\Gamma_i\right)$ is not a sFd. In order to solve this problem, we have to perform some translations, and a possible choice of $G$-sFd is
		 $$\Gamma=\underset{i=1}{\overset{m}{\bigcup}}\overline{\Gamma}_i,$$
		 where
		 $$\overline{\Gamma}_i=x^{k\ssum{j=1}{i-1}\alpha_j}y^{k\ssum{j=1+i}{m}\delta_j}\cdot\Gamma_i\quad\mbox{for }i=1,\ldots,m.$$
		The proof that $\Gamma$ is a $G$-sFd is now an easy check.
	\end{proof}

	As a byproduct of the proof, we also get that any $G$-sFd associated to a toric $G$-constellation has a particular shape.
	\begin{definition}\label{stair} We say that a a connected $G$-sFd $\Gamma$ is a \textit{stair} if 
		$$(m,n)\in\pi_{\N^2}(\Gamma) \Rightarrow (m+1,n+1),(m-1,n-1)\notin \pi_{\N^2}(\Gamma).$$
		
		Moreover,
		\begin{itemize}
			\item a \textit{$G$-stair} is a stair made of $k$ boxes,
			\item an \textit{abstract ($G$-)stair} is an abstract $G$-sFd whose realization in the representation tableau is a ($G$-)stair,
			\item given a stair $\Gamma$, the \textit{(anti)generators} of $\Gamma$ are the boxes positioned in the (top) lower corners of $\Gamma$ (see \Cref{genantigen}),
			\item a substair is any (possibly not connected) subset of a stair.
		\end{itemize}
		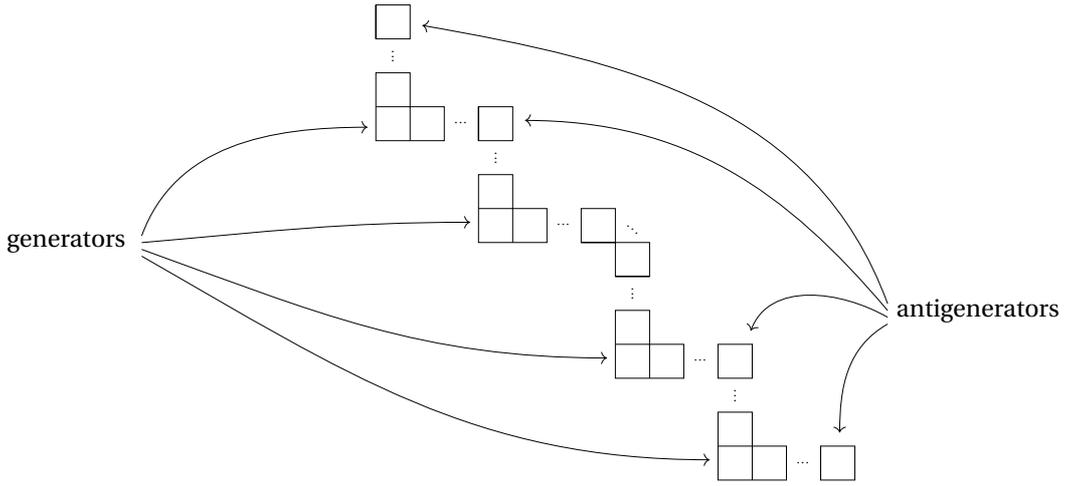
\begin{figure}[H]\scalebox{0.9}{
			\begin{tikzpicture}
				\node at (-4,0) {\scalebox{0.5}{
						\begin{tikzpicture}
					\draw (8,-6)--(8,-7)--(10,-7)--(10,-6)--(9,-6)--(9,-5)--(8,-5)--(8,-6);
					\draw (5,-3)--(5,-4)--(7,-4)--(7,-3)--(6,-3)--(6,-2)--(5,-2)--(5,-3);
					\draw (1,1)--(1,0)--(3,0)--(3,1)--(2,1)--(2,2)--(1,2)--(1,1);
					\draw (-2,4)--(-2,3)--(0,3)--(0,4)--(-1,4)--(-1,5)--(-2,5)--(-2,4);
					\draw (1,3)--(1,4)--(2,4)--(2,3)--(1,3)--(1,4);
					\draw (-2,6)--(-2,7)--(-1,7)--(-1,6)--(-2,6)--(-2,7);
					\draw (4,0)--(5,0)--(5,1)--(4,1)--(4,0)--(5,0);
					\draw (5,-1)--(6,-1)--(6,0)--(5,0)--(5,-1)--(6,-1);
					\draw (9,-3)--(9,-4)--(8,-4)--(8,-3)--(9,-3)--(9,-4);
							\draw (5,-3)--(6,-3)--(6,-4);
							\draw (1,1)--(2,1)--(2,0);
							\draw (-2,4)--(-1,4)--(-1,3);
							\node at (3.5,0.5) {$\cdots$};
							\node at (-1.5,5.6) {$\vdots$};
							\node at (5.5,-1.4) {$\vdots$};
							\node at (1.5,2.6) {$\vdots$};
							\node at (7.5,-3.5) {$\cdots$};
							\node at (5.5,0.5) {$\ddots$};
							\node at (10.5,-6.5) {$\cdots$};
							\node at (0.5,3.5) {$\cdots$};
							\node at (8.5,-4.4) {$\vdots$};
							\draw (9,-7)--(9,-6)--(8,-6);
							\draw (11,-6)--(11,-7)--(12,-7)--(12,-6)--(11,-6)--(11,-7);
				\end{tikzpicture}}};
				
				\draw[<-] (-0.7,-2.8) to [out=90,in=210] (0,-1.2);
				\draw[<-] (-2,-1.3) to [out=70,in=150] (0,-1.1);
				\draw[<-] (-5.3,1.8) to [out=0,in=130] (0,-1);
				\draw[<-] (-6.8,3.2) to [out=-10,in=110] (0,-0.9);
				\node[right] at (0,-1) {antigenerators};
				\node[right] at (-13,0) {generators};
				\draw[->] (-10.9,-0.2) to [out=-30,in=180] (-2.6,-3.2);
				\draw[->] (-10.9,-0.1) to [out=-20,in=180] (-4.1,-1.7);
				\draw[->] (-10.9,0) to [out=5,in=180] (-6.1,0.3);
				\draw[->] (-10.9,0.1) to [out=70,in=180] (-7.6,1.7);
			\end{tikzpicture}}
			\caption{Generators and antigenerators of a stair.}
			\label{genantigen}
		\end{figure}
	\end{definition}
	\begin{remark}\label{orderbox} If $\Calf$ is any torus equivariant $G$-constellation, and $\Gamma_\Calf$ is any $G$-sFd associated to $\Calf$, then $\Gamma_\Calf$ is connected, i.e. it is a $G$-stair, if and only if $\Calf$ is indecomposable, i.e. if it is toric.
		
		In this case we will refer to the upper left box as the first box and we will refer to the lower right box as the last box. In this a way, we provide of a total order the boxes of a $G$-stair and, consequently, we provide of a total order also the irreducible representations of $G$.
	\end{remark}
	\begin{remark} The set of generators of a stair $\Gamma$ corresponds to a minimal set of generators of the $\C[x,y]$-module $M_\Gamma$ associated to $\Gamma$, i.e. $m_1,\ldots,m_s\in M_{\Gamma}$ such that
	\[\Set{m_i+\mm\cdot M_\Gamma\in M_\Gamma/\mm\cdot M_\Gamma|i=1,\ldots,s}
	\]
		is a $\C$-basis of $M_\Gamma/\mm\cdot M_\Gamma$. Antigenerators correspond to one dimensional $\C[x,y]$-submodules of $M_\Gamma$, i.e. they form a $\C$-basis of the so-called socle
		$$(0:_{M_\Gamma}\mm)=\Set{m\in M_\Gamma|\mm\cdot m=0\in M_\Gamma }.$$
		
		Since each irreducible representation of $G$ appears once in a $G$-stair $L$, sometimes, with abuse of notation, we will say that an irreducible representation is a (anti)generator for $L$.
	\end{remark}
	\begin{definition}
		Given a connected $G$-sFd $\Gamma$, we denote  respectively by $\mathfrak{h}(\Gamma)$ and $\mathfrak{w}(\Gamma)$ the \textit{height} and the \textit{width} of $\Gamma$, i.e. the  height and the width of the smallest rectangle in $\N^2$ containing $\pi_{\N^2}(\Gamma)$.
		
		Moreover, the \textit{height} and the \textit{width}, $\mathfrak{h}(\Calf)$ and $\mathfrak{w}(\Calf)$, of a toric $G$-constellation $\Calf$ are  respectively the height and the width of any $G$-stair which represents $\Calf$.
	\end{definition}
 \section{The chamber decomposition of \texorpdfstring{$\Theta$}{} and the moduli spaces \texorpdfstring{$\Calm_C$}{}}
	This section is devoted to the proof of the first main result (\Cref{TEO1}). In the first part of the section we analyze the toric points of $\Calm_C$ and the corresponding $G$-constellations. Then, we show how to construct 1-dimensional families of nilpotent $G$-constellations. Finally, in the last part, we   give the proof of the first main result.
	\subsection{The crepant resolution $\Calm_C$ and its toric points}\label{toric resolution}
	As noticed in \Cref{section22}, the crepant resolution $\Calm_C\xrightarrow{\varepsilon_C}\A^2/G $ does not depend on the chamber $C$, i.e. for all $C,C'\in\Theta^{\gen}$ different chambers, there exists a canonical isomorphism $\varphi:\Calm_C\xrightarrow{\sim}\Calm_C'$ such that the diagram
	\[
\begin{tikzcd}
\Calm_C \arrow{rr}{\varphi} \arrow[swap]{dr}{{\varepsilon}_C} & & \Calm_{C'}\arrow{dl}{\varepsilon_{C'}}\\
& \A^2/G&
\end{tikzcd}
\]
	commutes. 
	
	The varieties $\A^2$, $\A^2/G$ and $\Calm_C$ are toric (see for example \cite[Chapter 10]{COX}  or \cite[Chapter 2]{FULTORI}) and we can rewrite the diagram
		\[
\begin{tikzcd}
 &\A^2 \arrow{d}{\pi}\\
\Calm_C\arrow{r}{\varepsilon_C} &\A^2/G
\end{tikzcd}
\]
	 in terms of fans as follows:
	\begin{center}
		\begin{tikzpicture}
			\node at (0.9,0) {\begin{tikzpicture}[scale=0.8]
					\draw[-] (1,0)--(0,0)--(3,-2); 
					\draw[-] (0,1)--(0,0)--(2,-1); 
					\node at (0.7,-1) {$\Calm_C$};
					\node at (2.2,-1.2) {$\ddots$};
					\node[above] at (0,1) {\tiny$(0,1)$};
					\node[right] at (3,-2) {\tiny$(k,-k+1)$};
					\node[right] at (2,-1) {\tiny$(2,-1)$};
					\node[right] at (1,0) {\tiny$(1,0)$};
			\end{tikzpicture}};
			\draw[->] (2,0)--(4,0);
			\node at (6,0) {	\begin{tikzpicture}[scale=0.8]
					\draw[-] (0,1)--(0,0)--(3,-2); 
					\node at (0.65,-1) {$\A^2/G$};
					\node[above] at (0,1) {\tiny$(0,1)$};
					\node[right] at (3,-2) {{\tiny$(k,-k+1)$}.};
			\end{tikzpicture}}; 
			\node at (5.5,3.5) {	\begin{tikzpicture}
					\draw[-] (0,1)--(0,0)--(1,0); 
					\node at (1,1) {$\A^2$};
					\node[right] at (1,0) {\tiny$(1,0)$};
					\node[above] at (0,1) {\tiny$(0,1)$};
			\end{tikzpicture}}; 
			\node[above] at (3,0) {\small$\varepsilon_C$};
			\draw[<-] (5.5,1.5)--(5.5,2.5);
			\node[right] at (5.5,2) {\small$\pi$};
		\end{tikzpicture}
		
	\end{center}
	In particular, $\Calm_C$ is covered by the $k$ toric charts $U_j\cong\A^2$, for $j=1,\ldots,k$, associated to the maximal cones of the  fan for $\Calm_C$ showed above.
	
	Let us identify $\A^2/G$ with the subvariety of $\A^3$
	\[
	\A^2/G=\Set{(\alpha,\beta,\gamma)\in\A^3|\alpha\beta-\gamma^k=0},
	\]
	and let us put (toric) coordinates $a_j,c_j$ on each $U_j$ for $j=1,\ldots,k$. Then, we can encode the diagram above into the following  $k$ diagrams
	\begin{center}
		\begin{tikzpicture}
			\node at (0,0) {$U_j$};
			\node at (3.5,0) {$\A^2/G$};
			\node at (3.5,1) {$\A^2$};
			\draw[->] (0.3,0)--(3,0);
			\draw[|->] (0.6,-0.5)--(1.7,-0.5);
			\draw[->] (3.5,0.8)--(3.5,0.3);
			\draw[|->] (5,0.8)--(5,0.3);
			\node[right] at (3.5,0.6) {\tiny$\pi$};
			\node[above] at (1.5,0) {\tiny$\varepsilon_j$};
			\node at (0,-0.5) {\scriptsize$(a_j,c_j)$};
			\node at (3.5,-0.5) {\scriptsize{$(a_j^{k-j+1}c_j^{k-j},a_j^{j-1}c_j^{j},a_jc_j)$}};
			\node at (5,0) {\scriptsize{$(x^k,y^k,xy)$}};
			\node at (5,1) {\scriptsize{$(x,y)$}};
		\end{tikzpicture}
	\end{center}
for $j=1,\ldots,k$. In this way, we obtain some relations between the coordinates $x,y$ on $\A^2$ and the coordinates $a_j,c_j$ on $U_j$, namely
\begin{equation}\label{toricrelations}
\begin{array}{c}
a_j=x^{j}y^{j-k},\\
c_j=x^{1-j}y^{k-j+1}.
\end{array}
\end{equation}
 Formally, these are relations between regular functions $x,y,a_j,c_j\in \C[a_j,c_j]\underset{\C[x,y]^G}{\otimes}\C[x,y]$ defined on $U_j\underset{\A^2/G}{\times}\A^2= \Spec\left(\left(\C[a_j,c_j]\underset{\C[x,y]^G}{\otimes}\C[x,y]\right)_{\red}\right) $. 
\begin{remark}\label{orderconst}	The toric points of $\Calm_C$ are the origins of the charts $U_j$ and they correspond to the toric $C$-stable $G$-constellations.  Indeed, the torus $\T^2/G$ acts on $\Calm_C$ making it into a toric variety, as described at the beginning of this section, and this toric action coincides with the action
	\[
\begin{tikzcd}[row sep=tiny]
 \T^2\times \Calm_C \arrow{r} & \Calm_C \\
(\sigma ,[\Calf])\arrow[mapsto]{r} & {[}\sigma^*\Calf{]} .
\end{tikzcd}
\] 
	This is a consequence of the universal property of $\Calm _C$. Notice that, outside the exceptional locus of $\Calm_C$, i.e. on the open subset of free orbits, a direct computation is enough to show that the two actions agree.
	
	Hence we have a total order on the toric $G$-constellations over $\Calm_C$, in the sense that the first toric $G$-constellation is the $G$-constellation over the origin of $U_1$, the second one is the $G$-constellation over the origin of $U_2$, and so on.
\end{remark}
\begin{remark}\label{defomations}
	Let $\Gamma$ be a $G$-stair. Then there exists a unique $\sigma\in\Irr(G)$ such that
	$$y\cdot \sigma =0 \mbox{ and } x\cdot \sigma\otimes\rho_{-1}=0$$
	in $\Gamma$. In particular, the representation $\sigma$ corresponds to the first box of $\Gamma$. This representation is important because, if we want to deform in a non-trivial way the $G$-constellation $\Calf_\Gamma$ associated to $\Gamma$ keeping the property of being nilpotent, there are only two ways to do it, namely to modify the $\C[x,y]$-module structure of $\Calf_\Gamma$ by imposing
	$$y\cdot\sigma=\lambda \cdot \sigma\otimes\rho_{-1},\quad \lambda\in\C^*$$
	or
	$$x\cdot\sigma\otimes\rho_{-1}=\mu \cdot \sigma,\quad \mu\in\C^*.$$
	Indeed, if $y\cdot \sigma=\lambda\cdot \sigma\otimes\rho_{-1}$ is not zero, then the nilpotency hypothesis implies 
	$$x\cdot \sigma\otimes\rho_{-1}=\frac{1}{\lambda}xy\cdot \sigma=0,$$
	and the other case is similar. Comparing this with the proof of \Cref{toric basis} one can show that letting $\lambda$ (resp. $\mu$) varying in $\C^*$ all the $G$-constellations so obtained are not isomorphic to each other (as $G$-constellations). In particular $\lambda,\mu$ are coordinates on a chart of $\Calm_C$ around $\Calf_\Gamma$.
\end{remark}
As a consequence of the above remark, we obtain the following lemma.
	\begin{lemma}\label{hwchart} If $\Calf_j$ is the toric $G$-constellation over the origin of the $j$-th chart of some $\Calm_C$, then we have
		$$\mathfrak{h}(\Calf_j)=k-j+1$$
		or, equivalently
		$$\mathfrak{w}(\Calf_j)=j.$$
	\end{lemma}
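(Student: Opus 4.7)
The plan is to match two systems of local coordinates at the toric fixed point $\Calf_j\in\Calm_C$: the toric coordinates $(a_j,c_j)$ coming from the fan description, and the deformation parameters $(\lambda,\mu)$ provided by \Cref{defomations}. I will compare their $\T^2$-weights; this will determine the positions of the first box $\sigma$ of the $G$-stair $\Gamma_j$ associated to $\Calf_j$, and of its twist $\sigma_-:=\sigma\otimes\rho_{-1}$, inside the representation tableau, from which the width and height of $\Gamma_j$ can be read off.

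First I would place $\Gamma_j$ inside the representation tableau, writing $(p_x,p_y)$ and $(q_x,q_y)$ for the positions of $\sigma$ and $\sigma_-$ respectively. Because every irreducible representation of $G$ appears exactly once in a $G$-stair, $\sigma_-$ is a box of $\Gamma_j$; and because $\sigma$ is the upper-left box of $\Gamma_j$ (\Cref{orderbox}), one has $q_x\geq p_x$ and $q_y\leq p_y$. Using \Cref{toric basis} to regard the basis vectors of $\Calf_j$ as $\T^2$-semi-invariants with weights equal to their positions in the tableau, I would compute, by $\T^2$-equivariance of the defining relations $y\cdot\sigma=\lambda\sigma_-$ and $x\cdot\sigma_-=\mu\sigma$, the weights
\[
\mathrm{wt}(\lambda)=(p_x-q_x,\,p_y+1-q_y),\qquad \mathrm{wt}(\mu)=(q_x+1-p_x,\,q_y-p_y).
\]
On the other hand, from \eqref{toricrelations} together with the Hilbert--Chow relations $a_jc_j=xy$ and $a_j^{k-j+1}c_j^{k-j}=x^k$ one reads off
\[
\mathrm{wt}(a_j)=(j,\,j-k),\qquad \mathrm{wt}(c_j)=(1-j,\,k-j+1).
\]
Since both $(\lambda,\mu)$ and $(a_j,c_j)$ are systems of local parameters at the same smooth toric fixed point of $\Calm_C$, their $\T^2$-weights must agree up to reordering.

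Only one of the two pairings is admissible: identifying $\mathrm{wt}(\lambda)$ with $\mathrm{wt}(a_j)$ would force $q_x-p_x=-j<0$, contradicting $q_x\geq p_x$. Hence $\mathrm{wt}(\lambda)=\mathrm{wt}(c_j)$ and $\mathrm{wt}(\mu)=\mathrm{wt}(a_j)$, which gives $q_x-p_x=j-1$ and $p_y-q_y=k-j$. So $\Gamma_j$ spans at least $j$ columns and at least $k-j+1$ rows, yielding $\mathfrak{w}(\Gamma_j)\geq j$ and $\mathfrak{h}(\Gamma_j)\geq k-j+1$. I would close the argument using the identity $\mathfrak{w}(\Gamma)+\mathfrak{h}(\Gamma)=k+1$, valid for any $G$-stair since such a stair is a monotone right-and-down path of $k$ boxes consisting of $\mathfrak{w}-1$ horizontal and $\mathfrak{h}-1$ vertical steps; together with the two lower bounds, this forces equality throughout, which is the statement of the lemma. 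The main point requiring care is the identification of the $(\lambda,\mu)$-chart as a $\T^2$-equivariant local chart at $\Calf_j$ with the stated weights; this rests on the coincidence, noted in \Cref{orderconst}, between the toric $\T^2$-action on $\Calm_C$ and the pullback action $\sigma\cdot[\Calf]=[\sigma^*\Calf]$, which is precisely what makes the deformation relations of \Cref{defomations} $\T^2$-equivariant with the weights computed above.
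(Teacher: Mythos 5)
Your proof is correct and follows essentially the same route as the paper: both identify the two nilpotent deformation directions of \Cref{defomations} with the toric coordinates $(a_j,c_j)$ and read off $\mathfrak{w}$ and $\mathfrak{h}$ from the weight relations \eqref{toricrelations}. The only difference is cosmetic — you justify which deformation parameter matches $a_j$ versus $c_j$ via the positivity constraint $q_x\geq p_x$ and then close with the identity $\mathfrak{w}+\mathfrak{h}=k+1$, whereas the paper reads both equalities directly off the positions of the first and last boxes.
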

	\begin{proof}
		Let $\Gamma_j\subset\Calt_G$ be a $G$-stair for $\Calf_j$. In particular, it has the form in \Cref{shapelabel}\begin{figure}[ht]\scalebox{1}{
				\begin{tikzpicture}
					\node at (0,0) {\scalebox{0.3}{
							\begin{tikzpicture}
					\draw (8,-6)--(8,-7)--(10,-7)--(10,-6)--(9,-6)--(9,-5)--(8,-5)--(8,-6);
					\draw (5,-3)--(5,-4)--(7,-4)--(7,-3)--(6,-3)--(6,-2)--(5,-2)--(5,-3);
					\draw (1,1)--(1,0)--(3,0)--(3,1)--(2,1)--(2,2)--(1,2)--(1,1);
					\draw (-2,4)--(-2,3)--(0,3)--(0,4)--(-1,4)--(-1,5)--(-2,5)--(-2,4);
					\draw (1,3)--(1,4)--(2,4)--(2,3)--(1,3)--(1,4);
					\draw (-2,6)--(-2,7)--(-1,7)--(-1,6)--(-2,6)--(-2,7);
					\draw (4,0)--(5,0)--(5,1)--(4,1)--(4,0)--(5,0);
					\draw (5,-1)--(6,-1)--(6,0)--(5,0)--(5,-1)--(6,-1);
					\draw (9,-3)--(9,-4)--(8,-4)--(8,-3)--(9,-3)--(9,-4);
								\draw (5,-3)--(6,-3)--(6,-4);
								\draw (1,1)--(2,1)--(2,0);
								\draw (-2,4)--(-1,4)--(-1,3);
								\node at (3.5,0.5) {$\cdots$};
								\node at (-1.5,5.6) {$\vdots$};
								\node at (5.5,-1.4) {$\vdots$};
								\node at (1.5,2.6) {$\vdots$};
								\node at (7.5,-3.5) {$\cdots$};
								\node at (5.5,0.5) {$\ddots$};
								\node at (10.5,-6.5) {$\cdots$};
								\node at (0.5,3.5) {$\cdots$};
								\node at (8.5,-4.4) {$\vdots$};
								\draw (9,-7)--(9,-6)--(8,-6);
								\draw (11,-6)--(11,-7)--(12,-7)--(12,-6)--(11,-6)--(11,-7);
					\end{tikzpicture}}};
					\draw[<-] (2,-1.8) to [out=90,in=210] (2.3,-1.2);
					\node[right] at (-1.1,1.9){\small$x^{\alpha}y^{\beta}$};
					\draw[->] (-1,1.9) to [out=135,in=0] (-1.8,2);
					\node[right] at (2.2,-1.1)  {\small$x^{\gamma}y^{\delta}$};
					
			\end{tikzpicture}}
			\caption{\ }
			\label{shapelabel}
		\end{figure}
		where just the labels of the boxes we are interested in are shown. Recall, from \Cref{section 32}, that, if we write the skew Ferrers diagram $\pi_{\N^2}(\Gamma_j)=A\smallsetminus B$ as the difference of two Ferrers diagrams $A$ and $B$, then $\Calf_j\cong M_{\Gamma_j}$, where
	$$M_{\Gamma_j}\cong \frac{I_A}{I_A\cap I_B},$$
	and $I_A,I_B$ are as in the proof of \Cref{lemmatec}. Now, if we deform $\Calf_j$ as in \Cref{defomations}, by using the parameters $a_j,c_j\in\C$, we get relations:
	$$x\cdot x^{\gamma}y^{\delta}=a_jx^{\alpha}y^{\beta}$$
	$$y\cdot x^{\alpha}y^{\beta}=c_jx^{\gamma}y^{\delta} $$
	and, the relations \eqref{toricrelations} tell us that
	$$({\gamma-\alpha+1},{\delta-\beta})=({\mathfrak{w}(\Calf)},{-\mathfrak{h}(\Calf)+1})=(j,j-k)\in\N^2$$
	$$({\alpha-\gamma},{\beta-\delta+1})=({-\mathfrak{w}(\Calf)+1},{\mathfrak{h}(\Calf)})=(1-j,k-j+1)
	\in\N^2$$
	which completes the proof.
	\end{proof}

	\begin{remark} \Cref{hwchart} implies that any two toric $G$-constellations of the same height (or equivalently width) cannot belong to the same chamber, i.e. they cannot be $\theta$-stable for the same generic parameter $\theta\in\Theta^{\gen}$ simultaneously.
	\end{remark}
	\subsection{One dimensional families}
	\begin{definition}
		Given a toric $G$-constellation $\Calf$ and its abstract $G$-stair $\Gamma_\Calf$, its \textit{favorite condition} is the stability condition $\theta_\Calf\in\Theta$ defined by:
		$$(\theta_\Calf)_i=\begin{cases}
		-2 &\mbox{if $\rho_i$ is a generator and it is neither the first nor the last box of $\Gamma_\Calf$},\\
		-1 &\mbox{if $\rho_i$ is a generator and it is either the first or the last box of $\Gamma_\Calf$},\\
		2 &\mbox{if $\rho_i$ is an antigenerator and it is neither the first nor the last box of $\Gamma_\Calf$},\\
		1 &\mbox{if $\rho_i$ is an antigenerator and it is either the first or the last box of $\Gamma_\Calf$},\\
		0 & \mbox{otherwise}
		\end{cases}$$
	Moreover, the \textit{cone of good conditions for $\Calf$}, is the cone:
	\[
	\Theta_\Calf=\Set{\theta\in\Theta^{\gen}|\Calf \mbox{ is $\theta$-stable}}.
	\]
	\end{definition}
 \begin{remark}
     It is worth mentioning that the favorite condition $\theta_{\Calf}$ of a toric $G$-constellation $\Calf$ can be understood as the stability condition determined by an appropriate flow on a certain quiver as explained in \cite[\S 6]{INFIRRI}.
 \end{remark}
	\begin{definition} Let $\Gamma$ be a stair and let $\Gamma'\subset \Gamma$ be a substair. We  say that an element $v\in\Gamma'$ is
		\begin{itemize}
			\item a \textit{left internal endpoint} of $\Gamma'$ if there exists $w\in\Gamma\smallsetminus\Gamma'$ such that $x\cdot w=v$ or if $y\cdot v \in\Gamma\smallsetminus\Gamma'$;
			\item a \textit{right internal endpoint} of $\Gamma'$ if there exists $w\in\Gamma\smallsetminus\Gamma'$ such that $y\cdot w=v$ or if $x\cdot v \in\Gamma\smallsetminus\Gamma'$.
		\end{itemize}
		Moreover, we say that
		\begin{itemize}
			\item a left (resp. right) internal endpoint is a \textit{horizontal left (resp. right) cut} if $y\cdot v \in\Gamma\smallsetminus\Gamma'$ (resp. there exists $w\in\Gamma\smallsetminus\Gamma'$ such that  $y\cdot w=v$);
			\item a left (resp. right) internal endpoint is a \textit{vertical left (resp. right) cut} if there exists $w\in\Gamma\smallsetminus\Gamma'$ such that $x\cdot w=v$ (resp. $x\cdot v \in\Gamma\smallsetminus\Gamma'$);
		\end{itemize}
	\end{definition}
	\begin{remark}\label{endpoints}If $\Calf$ is a $G$-constellation and $\Gamma_\Calf$ is a $G$-stair for $\Calf$, then a substair $\Gamma\subset\Gamma_\Calf$ corresponds to a $G$-equivariant $\C[x,y]$-submodule $\Cale_\Gamma$ of $\Calf$ if and only if it has only vertical left cuts and horizontal right cuts. Moreover, if $\Gamma$ is connected and $\theta_\Calf$ is the favorite condition of $\Calf $, then,
		$$\theta_\Calf(\Cale_\Gamma)=\begin{cases}
			1&\mbox{if $\Gamma$ has one internal endpoint},\\
			2&\mbox{if $\Gamma$ has two internal endpoints}.
		\end{cases}$$ 
	\end{remark}
\begin{remark}\label{ororverver} Let $\Calf$ be a toric $G$-constellation with abstract $G$-stair $\Gamma_\Calf$ and let $\Cale<\Calf$ be a subrepresentation, i.e. a $G$-invariant linear subspace, whose substair $\Gamma_\Cale\subset\Gamma_\Calf$ is connected. Then, if $\Gamma_\Cale$ has two horizontal cuts or two vertical cuts and $\theta_\Calf$ is the favorite condition of $\Calf$, we have
	$$\theta_\Calf(\Cale)=0.$$
	
\end{remark}
\begin{remark}\label{propfavorite} The following properties are easy to check for a toric $G$-constellation $\Calf$:
\begin{itemize}
	\item favorite conditions are never generic,
	\item the $G$-constellation $\Calf$ is $\theta_\Calf$-stable,
	\item there exist generic conditions $\theta \in \Theta^{\gen}$ such that $\Calf$ is $\theta $-stable, i.e. the cone of good conditions $\Theta_\Calf$ is not empty.
\end{itemize}
Moreover, given a chamber $C$, we have:
$$C=\underset{[\Calf]\in\Calm_C}{\bigcap}\Theta_\Calf.$$
 For example, one can prove the third property using the openness of the nonempty set $\set{\theta \in\Theta | \Calf\mbox{ is strictly $\theta$-stable}} $ and the denseness of $\Theta^{\gen}$. However, we give here an alternative proof of this fact as  in what follows   we shall need a similar argument. 

Let $\rho_i$ be any irreducible representation, we  denote by $\Calf_{\rho_i}$ the $G$-equivariant $\C[x,y]$-submodule of $\Calf$ generated by $\rho_i$ and, we denote by $\Gamma_{\rho_i}\subset\Gamma _\Calf$ the abstract substair and $G$-stair corresponding to $\Calf_{\rho_i}$ and $\Calf$ respectively.

Consider an $\varepsilon\in\Theta$ with the following properties:
$$\begin{cases}
	\varepsilon_i=0 & \mbox{if }\rho_i\mbox{ is an antigenerator},\\
	\varepsilon_i<0 & \mbox{if }\rho_i\mbox{ is neither a generator nor an antigenerator},\\
	\varepsilon_i=-\ssum{\rho_j\in(\Gamma_{\rho_i}\smallsetminus\rho_i)}{ }\varepsilon_j & \mbox{if }\rho_i\mbox{ is a generator},\\
	\ssum{\mbox{\tiny $\rho_i$ generator}}{ }\varepsilon_i<1.
\end{cases}$$
Then, for any subrepresentation $\Cale<\Calf$, we have $$\varepsilon(\Cale)>-\ssum{\mbox{\tiny $\rho_i$ generator}}{ }\varepsilon_i>-1.$$

Hence, the $G$-constellation $\Calf$ is $(\theta_\Calf+\varepsilon)$-stable. Indeed, \Cref{endpoints} implies that, given an indecomposable proper $G$-equivariant $\C[x,y] $-submodule we have
$$(\theta_\Calf+\varepsilon)(\Cale)>0.$$
On the contrary, if $\Cale $ is not indecomposable then it is a direct sum of indecomposable components and $(\theta_\Calf+\varepsilon)(\Cale)>0$ follows by the additivity of $\theta_\Calf+\varepsilon$ on direct sums.

We conclude by noticing that $\Theta\smallsetminus\Theta^{\gen}$ is a union of hyperplanes and so, there is at least a choice $\varepsilon\in\Theta$ such that $\theta_\Calf+\varepsilon$ is generic.

We will see in the proof of \Cref{TEO1} that there is an easier way, which does not involve any $\varepsilon$, to prove that $\Theta _\Calf$ is not empty.
\end{remark}
	\begin{definition}\label{deflink} An \textit{abstract linking stair} is an abstract stair made of $2k$ boxes obtained from  an abstract $G$-stair $\Gamma$ in either of the following ways:
		\begin{enumerate}
			\item (\textit{decreasing} linking stair of $\Gamma$) take two copies of $\Gamma $ and make a new abstract stair by gluing the right edge of the last box of one copy to the left edge of the first box of the other copy;
			\item (\textit{increasing} linking stair of $\Gamma$) take two copies of $\Gamma $ and make a new abstract stair by gluing the lower edge of the last box of one copy to the upper edge of the first box of the other copy.
		\end{enumerate}
		A \textit{linking stair} is a realization of an abstract linking stair as a subset of the representation tableau.
	\end{definition}
	\begin{remark}
		An abstract linking stair contains exactly $k$ different abstract $G$-stairs.
	\end{remark}
\begin{prop}\label{propcoppia}
	Let $\Gamma$ be the abstract $G$-stair of a $G$-constellation $\Calf$ and let $L$ be its abstract decreasing linking stair. Consider any $G$-stair $\Gamma' \subset L$ and its associated $G$-constellation $\Calf'$. Then, the following are equivalent:
	\begin{enumerate}
		\item there exists at least a chamber $C$ such that both $\Calf $ and $\Calf'$ belong to $C$, i.e. $\Theta_{\Calf}\cap \Theta_{\Calf'}\not=\emptyset$,
		\item $\mathfrak{h}(\Calf')=\mathfrak{h}(\Calf)-1$,
		\item the substair $\Gamma'\subset L$ has a horizontal left cut.
	\end{enumerate}
	In particular, $ \Calf' $ is the $G$-constellation next to $\Calf$ in $\Calm_C$ as per \Cref{orderconst}.
\end{prop}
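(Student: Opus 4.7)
I plan to prove the equivalences via the chain $(1) \Rightarrow (2) \Leftrightarrow (3) \Rightarrow (1)$, then deduce the ``in particular'' clause from \Cref{hwchart} and \Cref{orderconst}.

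For $(2) \Leftrightarrow (3)$ the argument is purely combinatorial. Label the $2k$ boxes of $L$ in stair order as $b_0, \ldots, b_{2k-1}$ and encode each step by $r_i \in \{0,1\}$, with $r_i = 1$ if $b_i \to b_{i+1}$ is a horizontal step and $r_i = 0$ if it is vertical. Since $L$ is a \emph{decreasing} linking stair, the gluing step gives $r_{k-1} = 1$, and the second copy of $\Gamma$ imposes $r_{k+j} = r_j$ for $0 \le j \le k-2$. Writing $\Gamma'_s = \{b_s, \ldots, b_{s+k-1}\}$, one has $\mathfrak{h}(\Gamma'_s) = k - \sum_{i=s}^{s+k-2} r_i$, and the two relations above reduce this to $\mathfrak{h}(\Gamma'_s) = \mathfrak{h}(\Gamma) + r_{s-1} - 1$. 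Because $L$ is itself a stair, the only left internal endpoint of $\Gamma'_s$ is $b_s$, and by the definition of cut type this cut is horizontal exactly when $b_{s-1}$ lies directly above $b_s$, i.e.\ when $r_{s-1} = 0$. Both $(2)$ and $(3)$ therefore reduce to the single equation $r_{s-1} = 0$.

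For $(1) \Rightarrow (2)$ I invoke \Cref{hwchart}: the toric $G$-constellations of $\Calm_C$ are $\Calf_1, \ldots, \Calf_k$ with pairwise distinct heights $\mathfrak{h}(\Calf_j) = k-j+1$. The formula above forces $\mathfrak{h}(\Calf') \in \{\mathfrak{h}(\Calf), \mathfrak{h}(\Calf)-1\}$; the value $\mathfrak{h}(\Calf') = \mathfrak{h}(\Calf)$ would collapse $\Calf' = \Calf$, so whenever $\Calf' \ne \Calf$ we must have $\mathfrak{h}(\Calf') = \mathfrak{h}(\Calf)-1$. The same argument identifies $\Calf'$ as the immediate successor of $\Calf$ in the ordering of \Cref{orderconst}, yielding the final clause.

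The heart of the proof is $(3) \Rightarrow (1)$. My plan is to exhibit a generic stability condition $\theta$ for which both $\Calf$ and $\Calf'$ are simultaneously $\theta$-stable, built from the favorite conditions of the two constellations. Explicitly, consider $\theta = \theta_\Calf + t\,\theta_{\Calf'} + \varepsilon$, with $t > 0$ small and $\varepsilon$ a small generic perturbation of the kind constructed in \Cref{propfavorite}. By \Cref{endpoints}, the $G$-subsheaves of $\Calf$ (resp.\ $\Calf'$) are parametrized by connected substairs of $\Gamma$ (resp.\ $\Gamma'$) with only vertical left cuts and horizontal right cuts; on those with strictly positive $\theta_\Calf$-weight, positivity of $\theta$ is preserved provided $t$ and $\varepsilon$ are sufficiently small. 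The delicate case is a subrepresentation with two cuts of the same type, where $\theta_\Calf$ vanishes by \Cref{ororverver}. This is where hypothesis $(3)$ enters decisively: using the overlap $\Gamma \cap \Gamma' \subset L$, I will match every such zero-weight substair of $\Gamma$ with a corresponding substair of $\Gamma'$ obtained by sliding the window across the horizontal cut, and verify that $\theta_{\Calf'}$ evaluates strictly positively on the match; the symmetric argument for $\Calf'$ uses the same overlap in the opposite direction. The hard part will be this bookkeeping of substair types across the linking stair $L$, for which condition $(3)$ is tailor-made: without the horizontal cut, the translated substairs would retain the same degenerate cut pattern and $\theta_{\Calf'}$ would also vanish, obstructing the existence of a common chamber.
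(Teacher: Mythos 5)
Your chain $(1)\Rightarrow(2)\Leftrightarrow(3)\Rightarrow(1)$ is architecturally sound, and two of the three legs are fine. The binary encoding argument for $(2)\Leftrightarrow(3)$ is correct and actually supplies the computation the paper omits. Your $(1)\Rightarrow(2)$ via \Cref{hwchart} (two toric constellations of equal height cannot coexist in a chamber) is a genuinely different and shorter route than the paper's, which instead proves $(1)\Rightarrow(3)$ directly by exhibiting, when $\Gamma'$ has a vertical left cut, the subrepresentation $\Cale_{\Gamma\cap\Gamma'}$ that is a submodule of $\Calf$ but the complement of a submodule of $\Calf'$, forcing $\Theta_\Calf\cap\Theta_{\Calf'}\subset\{\theta(\Cale)>0\}\cap\{-\theta(\Cale)>0\}=\emptyset$. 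Both routes work; the paper's has the advantage of not invoking the chart description of $\Calm_C$.

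The gap is in $(3)\Rightarrow(1)$, which is the heart of the statement. Your plan presupposes a dichotomy: every relevant subrepresentation has either strictly positive or zero $\theta_\Calf$-weight. That dichotomy is precisely the nontrivial claim. Concretely, to get $\theta$-stability of $\Calf'$ you must bound $\theta_\Calf(\Cale')$ from below for \emph{every} proper indecomposable submodule $\Cale'<\Calf'$; a priori the substair of $\Gamma$ carrying the same representations as $\Gamma_{\Cale'}$ could have, say, a horizontal left cut and a vertical right cut, on which $\theta_\Calf$ is strictly \emph{negative} (e.g.\ a single generator box contributes $-2$). The content of the paper's proof is a three-case analysis of the internal endpoints of $\Gamma_{\Cale'}$ in $\Gamma'$, transferred through the overlap $\Gamma\cap\Gamma'$ in $L$, showing that the corresponding substair $\Gamma_\Cale\subset\Gamma$ is always either itself of submodule type (so $\theta_\Calf(\Cale)>0$) or has two horizontal cuts (so $\theta_\Calf(\Cale)=0$ by \Cref{ororverver}), never anything worse --- and symmetrically for submodules of $\Calf$, whose images in $\Gamma'$ either remain submodules or acquire two vertical cuts. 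Your proposal names this as ``the hard part'' and defers it, so the verification that actually uses hypothesis $(3)$ (via the fact that the right internal endpoint of $\Gamma$ in $L$ is a vertical cut, while the first box of $\Gamma'$ is a horizontal left cut) is missing. A smaller point: the asymmetric weight $t$ buys you nothing --- for small $t$ the stability of $\Calf$ becomes automatic, but the stability of $\Calf'$ then requires exactly the nonnegativity statement above, so you cannot avoid the case analysis; the paper's symmetric choice $\theta=\theta_\Calf+\theta_{\Calf'}$ (then perturbed to reach $\Theta^{\gen}$) is simpler.
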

\begin{example} \Cref{examplefig} describes the situation via an example. Here, we are considering the $\Z/9\Z $-action  on $\A^2 $ given in \eqref{Zkaction}.
	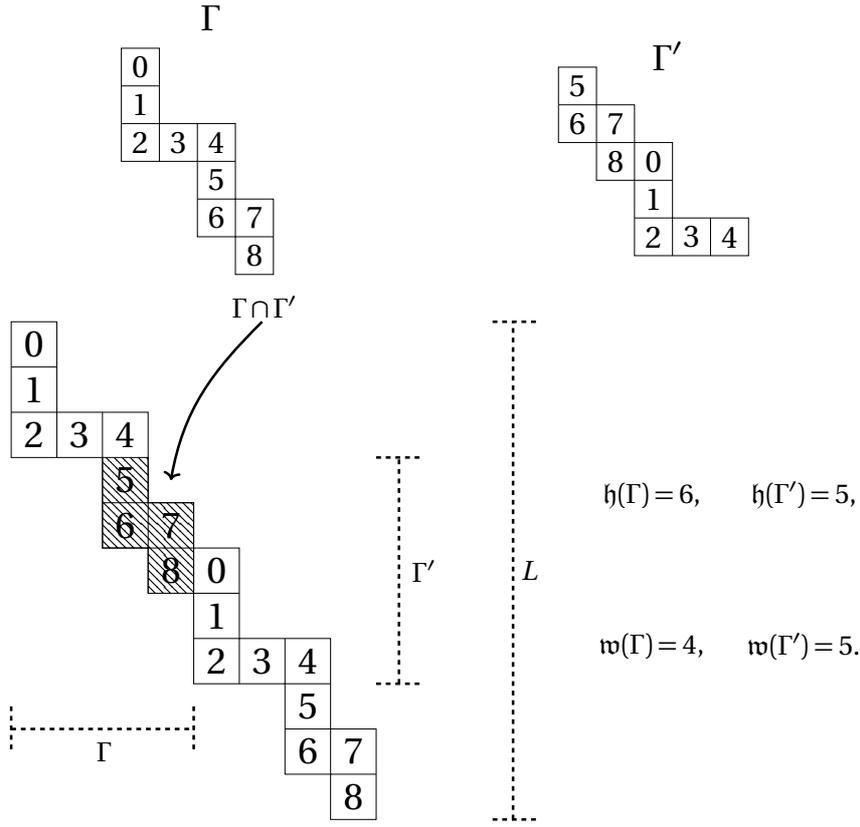
\begin{figure}[H]
		\begin{tikzpicture}\node at (-3,0) {\scalebox{0.5}{
					\begin{tikzpicture}
						\draw (0,-2)--(0,-3)--(2,-3)--(2,-5)--(3,-5)--(3,-6)--(4,-6)--(4,-4)--(3,-4)--(3,-2)--(1,-2)--(1,0)--(0,0)--(0,-2);
						\draw (0,-1)--(1,-1);
						\draw (0,-2)--(1,-2)--(1,-3);
						\draw (2,-2)--(2,-3)--(3,-3);
						\draw (2,-4)--(3,-4)--(3,-5)--(4,-5);
						
						\node at (0.5,-0.5) {\Huge 0};
						\node at (0.5,-1.5) {\Huge 1};
						\node at (0.5,-2.5) {\Huge 2};
						\node at (1.5,-2.5) {\Huge 3};
						\node at (2.5,-2.5) {\Huge 4};
						\node at (2.5,-3.5) {\Huge 5};
						\node at (2.5,-4.5) {\Huge 6};
						\node at (3.5,-4.5) {\Huge 7};
						\node at (3.5,-5.5) {\Huge 8};
						
			\end{tikzpicture}}};
			\node at (3,0) {\scalebox{0.5}{
					\begin{tikzpicture}
						
						\draw (2,-4)--(2,-5)--(3,-5)--(3,-6)--(4,-6)--(4,-4)--(3,-4)--(3,-4);
						\draw (4,-6)--(4,-8)--(7,-8)--(7,-7)--(5,-7)--(5,-5)--(4,-5);
						\draw (2,-4)--(2,-3)--(3,-3)--(3,-4);
						\draw (4,-6)--(5,-6);
						\draw (4,-7)--(5,-7)--(5,-8);
						\draw (2,-4)--(3,-4)--(3,-5)--(4,-5);
						\draw (6,-8)--(6,-7);
						
						\node at (2.5,-3.5) {\Huge 5};
						\node at (2.5,-4.5) {\Huge 6};
						\node at (3.5,-4.5) {\Huge 7};
						\node at (3.5,-5.5) {\Huge 8};
						\node at (4.5,-5.5) {\Huge 0};
						\node at (4.5,-6.5) {\Huge 1};
						\node at (4.5,-7.5) {\Huge 2};
						\node at (5.5,-7.5) {\Huge 3};
						\node at (6.5,-7.5) {\Huge 4};
						
			\end{tikzpicture}}};
			\node at (-2.8,1.9) {{\Large $\Gamma$}};
			\node at (3.2,1.4) {{\Large $\Gamma'$}};
		\end{tikzpicture}
		\begin{tikzpicture}
			\node at (0,0) {\scalebox{0.6}{
					\begin{tikzpicture}
						\draw (0,-2)--(0,-3)--(2,-3)--(2,-5)--(3,-5)--(3,-6)--(4,-6)--(4,-4)--(3,-4)--(3,-2)--(1,-2)--(1,0)--(0,0)--(0,-2);
						\draw (4,-6)--(4,-8)--(6,-8)--(6,-10)--(7,-10)--(7,-11)--(8,-11)--(8,-9)--(7,-9)--(7,-7)--(5,-7)--(5,-5)--(4,-5);
						\draw (0,-1)--(1,-1);
						\draw (0,-2)--(1,-2)--(1,-3);
						\draw (2,-2)--(2,-3);
						\draw (2,-4)--(3,-4)--(3,-5)--(4,-5);
						\draw(4,-6)--(5,-6);
						\draw(4,-7)--(5,-7)--(5,-8);
						\draw(6,-7)--(6,-8)--(7,-8);
						\draw(6,-9)--(7,-9)--(7,-10)--(8,-10);
						\draw[pattern=north west lines] (4,-6)--(4,-4)--(3,-4)--(3,-3)--(2,-3)--(2,-5)--(3,-5)--(3,-6);
						\draw[ultra thick,dashed] (8.5,-8)--(8.5,-3);
						\draw[ultra thick,dashed] (11,-11)--(11,0); 
						\draw[ultra thick,dashed] (8,-8)--(9,-8); 
						\draw[ultra thick,dashed] (8,-3)--(9,-3); 
						\draw[ultra thick,dashed] (11.5,0)--(10.5,0); 
						\draw[ultra thick,dashed] (11.5,-11)--(10.5,-11);
						
						\draw[ultra thick,dashed] (0,-9)--(4,-9); 
						\draw[ultra thick,dashed] (0,-8.5)--(0,-9.5);
						\draw[ultra thick,dashed] (4,-8.5)--(4,-9.5);
						
						\draw[ultra thick,->] (5.5,0) to [out=225 , in =80] (3.5,-3.5); 
						\node at (0.5,-0.5) {\Huge 0};
						\node at (0.5,-1.5) {\Huge 1};
						\node at (0.5,-2.5) {\Huge 2};
						\node at (1.5,-2.5) {\Huge 3};
						\node at (2.5,-2.5) {\Huge 4};
						\node at (2.5,-3.5) {\Huge 5};
						\node at (2.5,-4.5) {\Huge 6};
						\node at (3.5,-4.5) {\Huge 7};
						\node at (3.5,-5.5) {\Huge 8};
						\node at (4.5,-5.5) {\Huge 0};
						\node at (4.5,-6.5) {\Huge 1};
						\node at (4.5,-7.5) {\Huge 2};
						\node at (5.5,-7.5) {\Huge 3};
						\node at (6.5,-7.5) {\Huge 4};
						\node at (6.5,-8.5) {\Huge 5};
						\node at (6.5,-9.5) {\Huge 6};
						\node at (7.5,-9.5) {\Huge 7};
						\node at (7.5,-10.5) {\Huge 8};
						
			\end{tikzpicture}}};
			
			\node at (-2.2,-2.4) {$\Gamma$};
			\node at (-0.1,3.5) {$\Gamma\cap \Gamma'$};
			\node at (2,0) {$\Gamma'$};
			\node at (3.4,0) {$L$};
			\node at (5,1) {$\mathfrak{h}(\Gamma)=6$,};
			\node at (5,-1) {$\mathfrak{w}(\Gamma)=4$,};
			\node at (7,1) {$\mathfrak{h}(\Gamma')=5$,};
			\node at (7,-1) {$\mathfrak{w}(\Gamma')=5$.};
		\end{tikzpicture}
		\caption{The abstract linking stair $L$ of an abstract $G$-stair $\Gamma$ and a substair $\Gamma'$ of $L$ which satisfies the hypotheses of \Cref{propcoppia}.}
		\label{examplefig}
	\end{figure}
\end{example}
	\begin{proof} (of \Cref{propcoppia}).
		We start by introducing some notation.
		
		Let $\Calf,\Calf'$ be two $G$-constellations. Given a proper subrepresentation $\Cale<\Calf$ (resp. $\Cale'<\Calf'$), we  denote by $\Cale'$ (resp. $\Cale$) the corresponding subrepresentation $\Cale'<\Calf'$ (resp. $\Cale<\Calf$). Here, by ``corresponding" we mean that, since $\Cale$ is a subrepresentation of the regular representation $\C[G]$ of an abelian group, it decomposes as a direct sum of distinct indecomposable representations $\Cale\cong\underset{j}{\oplus}\rho_{i_j}$. Then, we denote by $\Cale'$ the subrepresentation of $\Calf'\cong\C[G]$ given by the same summands:
		$$\Cale'\cong\underset{j}{\oplus}\rho_{i_j}.$$
		In particular, for all $ \theta\in\Theta$, the two rational numbers
		$$\theta(\Cale)\ \ \mbox{and}\ \ \theta(\Cale')$$
		are the same. Moreover, we  denote by $\Gamma_{\Cale}\subset\Gamma$ (resp. $\Gamma_{\Cale'}\subset\Gamma'$) the substair associated to $\Cale$ (resp. $\Cale'$).
		
		Notice that, given a proper $G$-equivariant $\C[x,y]$-submodule $\Cale<\Calf$, the subrepresentation $\Cale'$ is not necessarily a $\C[x,y]$-submodule of $\Calf'$. We are now ready to proceed with the proof.
		\begin{itemize}
			\item[(\textit{2})$\Leftrightarrow$(\textit{3})] We omit the easy proof.
			\item[(\textit{1})$\Rightarrow$(\textit{3})] Suppose  by contradiction  that $\Gamma'\subset  L$ has a vertical left cut. Then, by \Cref{endpoints}, the subrepresentation $\Cale_{\Gamma\cap\Gamma'}<\Calf$ is a $\C[x,y]$-submodule because, in $\Gamma$, the substair $\Gamma\cap\Gamma'$ has a vertical left cut by hypothesis and its last box is not internal. At the same time, again by \Cref{endpoints}, $\Cale_{\Gamma\cap\Gamma'}'<\Calf'$ is the complement of a $\C[x,y]$-submodule, because its first box is not internal and it has a vertical right cut. Hence,
			$$C\subset\Theta_{\Calf}\cap\Theta_{\Calf'}\subset \{\theta(\Cale_{\Gamma\cap\Gamma'})>0\}\cap \{-\theta(\Cale_{\Gamma\cap\Gamma'})>0\}=\emptyset,$$
			which contradicts (\textit{1}).
			\item[(\textit{3})$\Rightarrow$(\textit{1})] In order to prove statement (\textit{1}), we need to show that 
			$$\Theta_\Calf\cap\Theta_{\Calf'}\not=\emptyset.$$
		We start by identifying the proper indecomposable $G$-equivariant subsheaves  $\Cale<\Calf$ (resp. $\Cale'<\Calf'$) such that also $\Cale'$ (resp. $\Cale$) is a proper $G$-equivariant subsheaf of $\Calf$ (resp. $\Calf'$).
		
			Let $\Cale'<\Calf'$ be a proper indecomposable $G$-equivariant submodule of $\Calf'$; we consider three different cases.\\
			\textbf{Case 1.} Both the first and the last box of the substair $\Gamma_{\Cale'}\subset\Gamma'$ are internal endpoints. Then, the same happens for $\Gamma_{\Cale}\subset\Gamma$. This is true because $\Gamma$ has a vertical right cut in $L$, by the construction of a decreasing linking stair (see \Cref{deflink}), and hence, the right internal endpoint of $\Gamma_{\Cale'}$ in $\Gamma'$, which is a horizontal cut by \Cref{endpoints}, is different from the right internal endpoint of $\Gamma$ in $L$. Therefore, both internal endpoints of $\Gamma_{\Cale'}$ correspond to internal endpoints of $\Gamma_\Cale$ of the same respective nature. As a consequence, the subrepresentation $\Cale$ is a proper, non necessarily indecomposable, $G$-equivariant submodule of $\Calf$.\\
			\textbf{Case 2.} The substair $\Gamma_{\Cale'}$ has only the vertical left cut in $\Gamma'$, and hence, its last box coincides with the last box of $\Gamma'$. In particular, this box is not the right internal endpoint of $\Gamma $ in $L$. We have to study the nature of the internal endpoints of $\Gamma_\Cale$. Notice first that it is enough to study the right internal endpoint of $\Gamma_\Cale$ because, if $\Gamma_\Cale$ has still left internal endpoint, then it is a vertical left cut. Let $\rho_i$ be the label on the last box of $\Gamma'$, then, the label on the horizontal left cut of $\Gamma'$ (i.e. its first box) is $\rho_{i+1}$. Now, since, by hypothesis (\textit{3}), the box labeled by $\rho_{i+1}$ is a horizontal left cut of $\Gamma'\subset L$, the box labeled by $\rho_i$ in $\Gamma$ has to be a horizontal right cut for the substair $\Gamma_\Cale$. Therefore, $\Gamma_\Cale$ has only vertical left cuts and horizontal right cuts, and so, by \Cref{endpoints}, $\Cale$ is a proper, non necessarily indecomposable, $G$-equivariant submodule.\\
			\textbf{Case 3.} The substair $\Gamma_{\Cale'}\subset \Gamma'$ has only the horizontal right cut, i.e. its first box coincides with the first box of $\Gamma'$. First of all notice that, as for the first analyzed case, the right internal endpoint of $\Gamma_{\Cale'}$ in $\Gamma'$, which is a horizontal cut by hypothesis, is different from the right internal endpoint of $\Gamma$ in $L$, which is vertical by definition of decreasing linking stair. Therefore, the box of $\Gamma$ with the same label as the horizontal right cut of $\Gamma_{\Cale'}$ is an internal endpoint of $\Gamma_\Cale$ and it is a horizontal right cut. Finally, the first box of $\Gamma' $ in $L$ is a left internal endpoint for $\Gamma_\Cale$, and so it is a horizontal left cut by point (\textit{3}) of the statement. As a consequence, $\Gamma_\Cale$ has two horizontal cuts.
			
			In summary, if $\Cale'<\Calf'$ is a proper indecomposable $G$-equivariant submodule of $\Calf'$ such that $\Gamma_{\Cale'}$ has a vertical left cut, then also $\Cale<\Calf$ is a proper, non necessarily indecomposable, $G$-equivariant submodule. While, if $\Gamma_{\Cale'}<\Gamma'$ has only the right horizontal cut, then $\Gamma_\Cale$ has two horizontal cuts.
			
			Following the same logic, if $\Cale<\Calf$ is a proper indecomposable $G$-equivariant submodule of $\Calf$ such that $\Gamma_{\Cale}$ has a horizontal right cut, then also $\Cale'<\Calf'$ is a proper, non necessarily indecomposable, $G$-equivariant submodule. While, if $\Gamma_{\Cale}<\Gamma$ has only the left vertical cut, then $\Gamma_{\Cale'}$ has two vertical cuts.
			
			We are now ready to exhibit a $\theta\in\Theta^{\gen}$ such $\Calf$ and $\Calf'$ are $\theta$-stable. Let $\theta_\Calf$ and $\theta_{\Calf'}$ be the respective favorite conditions for $\Calf $ and $\Calf'$ and let $\theta=\theta_\Calf+\theta_{\Calf'}$ be their sum. Then, both $\Calf$ and $\Calf'$ are $\theta$-stable. Indeed,
			\begin{itemize}
				\item if $\Cale<\Calf$ is a proper indecomposable $G$-equivariant $\C[x,y]$-submodule of $\Calf$ such that also $\Cale'$ is a $\C[x,y]$-submodule of $\Calf'$, then  
				$$\theta(\Cale)=\theta_\Calf(\Cale)+\theta_{\Calf'}(\Cale)=\theta_\Calf(\Cale)+\theta_{\Calf'}(\Cale')>0$$
				follows from the fact that $\Calf $ is $\theta_\Calf$-stable and $\Calf' $ is $\theta_{\Calf'}$-stable (see \Cref{propfavorite});
				\item if $\Cale'<\Calf'$ is a proper indecomposable $G$-equivariant $\C[x,y]$-submodule of $\Calf'$ such that $\Gamma_{\Cale}$ has two horizontal cuts, then $$\theta(\Cale')=\theta_\Calf(\Cale')+\theta_{\Calf'}(\Cale')=\theta_\Calf(\Cale)+\theta_{\Calf'}(\Cale')=\theta_{\Calf'}(\Cale')=1>0$$
				follows from the fact that $\Calf' $ is $\theta_{\Calf'}$-stable (see \Cref{propfavorite}) and from \Cref{endpoints,ororverver};	
				\item if $\Cale<\Calf$ is a proper indecomposable $G$-equivariant $\C[x,y]$-submodule of $\Calf$ such that $\Gamma_{\Cale'}$ has two vertical cuts, then $$\theta(\Cale)=\theta_\Calf(\Cale)+\theta_{\Calf'}(\Cale)=\theta_\Calf(\Cale)+\theta_{\Calf'}(\Cale')=\theta_{\Calf}(\Cale)=1>0$$
				follows from the fact that $\Calf $ is $\theta_{\Calf}$-stable (see \Cref{propfavorite}) and from \Cref{endpoints,ororverver};
				\item if $\Cale<\Calf$ (resp. $\Cale'<\Calf'$) is a proper decomposable $G$-equivariant $\C[x,y]$-submodule, then 
				$$\theta(\Cale)>0$$
				follows by applying the previous points to the indecomposable components of $\Cale$ and from the additivity of $\theta$.
			\end{itemize}
		
		The last issue here is that, in general, such $\theta$ is not generic, i.e. 
		$$\theta \in \overline{\Theta_\Calf\cap\Theta_{\Calf'}}\smallsetminus \Theta_\Calf\cap\Theta_{\Calf'}.$$ In order to solve this problem, we can perturb $\theta_\Calf$ and $\theta_{\Calf'}$ the same way as as we did in \Cref{propfavorite} thus obtaining a generic $\widetilde{\theta}\in\Theta_{\Calf}\cap\Theta_{\Calf'}$. Consider the stability conditions $\varepsilon,\varepsilon'\in\Theta$ defined as follows:
		$$\begin{cases}
			\begin{matrix*}[l]
				\varepsilon_i=0  & \mbox{if }\rho_i\mbox{ is an antigenerator of }\Gamma_\Calf  ,\\
				\varepsilon_i'=0 & \mbox{if }\rho_i\mbox{ is an antigenerator of }\Gamma_{\Calf'},\\
				\varepsilon_i<0 & \mbox{if }\rho_i\mbox{ is neither a generator nor an antigenerator of }\Gamma_\Calf,\\
				\varepsilon_i'<0 & \mbox{if }\rho_i\mbox{ is neither a generator nor an antigenerator of }\Gamma_{\Calf'},\\
				\varepsilon_i=-\ssum{\rho_j\in(\Gamma_{\rho_i}\smallsetminus\rho_i)}{ }\varepsilon_j & \mbox{if }\rho_i\mbox{ is a generator of }\Gamma_\Calf,\\
				\varepsilon_i'=-\ssum{\rho_j\in(\Gamma_{\rho_i}'\smallsetminus\rho_i)}{ }\varepsilon_j' & \mbox{if }\rho_i\mbox{ is a generator of }\Gamma_{\Calf'},
			\end{matrix*}\\
			\ssum{\mbox{\tiny $\rho_i$ generator of }\Gamma_\Calf}{ }\varepsilon_i+\ssum{\mbox{\tiny $\rho_i$ generator of }\Gamma_{\Calf'}}{ }\varepsilon_i'<1,
		\end{cases}$$
	where, as in \Cref{propfavorite}, $\Gamma_{\rho_i}\subset\Gamma$ (resp. $\Gamma_{\rho_i}'\subset\Gamma'$) is the substair associated to the $\C[x,y]$-submodule of $\Calf $ (resp. $ \Calf '$) generated by the irreducible subrepresentation $\rho_i$.
	
	Now, if
	$$\widetilde{\theta}=(\theta_{\Calf}+\varepsilon)+(\theta_{\Calf'}+\varepsilon')$$
	then $\Calf$ and $\Calf'$ are $\widetilde{\theta}$-stable, and $\varepsilon$ and $\varepsilon'$ can be chosen in such a way that $\widetilde{\theta}$ is generic. As a consequence $\Theta_\Calf\cap\Theta_{\Calf'}\not=\emptyset$.
		\end{itemize}	
	\end{proof}
	
	We will see, in the proof of \Cref{TEO1}, that there is an easier way to prove that $\Theta _\Calf\cap\Theta_{\Calf'}$ is not empty. By following the same logic, one can prove a similar statement for the increasing linking stairs.
	
	\begin{prop}\label{propcoppia1}
		Let $\Gamma$ be the abstract $G$-stair of a $G$-constellation $\Calf$ and let $L$ be its abstract increasing linking stair. Consider any $G$-stair $\Gamma'\subset L $ and its associated $G$-constellation $\Calf'$. Then, the following are equivalent:
		\begin{enumerate}
			\item there exists at least a chamber $C$ such that both $\Calf $ and $\Calf'$ belong to $C$, i.e. $\Theta_{\Calf}\cap \Theta_{\Calf'}\not=\emptyset$,
			\item $\mathfrak{h}(\Calf')=\mathfrak{h}(\Calf)+1$,
			\item the substair $\Gamma'\subset L$ has a  vertical right cut.
		\end{enumerate}
	In particular, $ \Calf $ is the $G$-constellation next to $\Calf'$ in $\Calm_C$ in the sense of \Cref{orderconst}.
	\end{prop}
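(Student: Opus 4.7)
The plan is to repeat the argument of \Cref{propcoppia} with the roles of $x$ and $y$ (equivalently, of horizontal and vertical cuts) systematically interchanged. This substitution is forced by the construction: whereas the decreasing linking stair glues its two copies of $\Gamma$ along horizontal edges (so the connecting transition from the last box of one copy to the first box of the other is multiplication by $x$, producing a vertical right cut of $\Gamma$ inside $L$), the increasing linking stair glues along vertical edges (the connecting transition is multiplication by $y$, producing a horizontal right cut of $\Gamma$ inside $L$). Each step of the proof of \Cref{propcoppia} then transports across this duality. The equivalence $(2)\Leftrightarrow(3)$ reduces to a direct combinatorial check: the shifted $G$-stair $\Gamma'\subset L$ satisfies $\mathfrak{h}(\Calf')=\mathfrak{h}(\Calf)+1$ precisely when the first move of $\Gamma$ is horizontal, and this same condition places a vertical cut at both the new first box and the new last box of $\Gamma'$ inside $L$.

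For $(1)\Rightarrow(3)$ I argue by contrapositive. If $\Gamma'\subset L$ does not have a vertical right cut, then its right internal endpoint is a horizontal right cut, and analogously its left endpoint is a horizontal left cut. Consider the common substair $\Gamma\cap\Gamma'$. Inside $\Gamma$ it begins with a horizontal left cut and ends at the (non-internal) last box of $\Gamma$, so by \Cref{endpoints} the subrepresentation $\Cale_{\Gamma\cap\Gamma'}$ is the complement of a $\C[x,y]$-submodule of $\Calf$. Inside $\Gamma'$, the same substair begins non-internal (first box of $\Gamma'$) and ends with a horizontal right cut, hence $\Cale_{\Gamma\cap\Gamma'}'$ is itself a $\C[x,y]$-submodule of $\Calf'$. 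Any $\theta\in\Theta_{\Calf}\cap\Theta_{\Calf'}$ would therefore need to satisfy $\theta(\Cale_{\Gamma\cap\Gamma'})<0$ and $\theta(\Cale_{\Gamma\cap\Gamma'}')>0$ simultaneously, which is impossible since the two values agree.

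For $(3)\Rightarrow(1)$ I form the sum $\theta=\theta_{\Calf}+\theta_{\Calf'}$ of the two favorite conditions and verify, by the same three-case analysis as in the proof of \Cref{propcoppia}, that both $\Calf$ and $\Calf'$ are $\theta$-stable. A proper indecomposable $G$-equivariant $\C[x,y]$-submodule $\Cale'<\Calf'$ falls into one of three types according to its endpoint structure in $\Gamma'$: both endpoints internal; only a vertical left cut (with the last box of $\Gamma_{\Cale'}$ equal to the last box of $\Gamma'$); or only a horizontal right cut (with the first box of $\Gamma_{\Cale'}$ equal to the first box of $\Gamma'$). Under hypothesis $(3)$, the first two types transport to genuine $\C[x,y]$-submodules of $\Calf$, while the third corresponds to a subrepresentation of $\Calf$ whose substair acquires two horizontal cuts in $\Gamma$; a symmetric discussion handles proper indecomposable submodules of $\Calf$. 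In each case \Cref{endpoints,ororverver} together with the defining positivity of the favorite conditions $\theta_{\Calf}$ and $\theta_{\Calf'}$ force $\theta(\cdot)>0$ on the relevant subrepresentation, and a final small perturbation $\varepsilon+\varepsilon'$, exactly as in \Cref{propfavorite}, moves $\theta$ into $\Theta^{\gen}$.

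The principal obstacle is the bookkeeping for the case analysis in $(3)\Rightarrow(1)$: since the connecting move of $L$ is now vertical rather than horizontal, the particular endpoint of $\Gamma$ (or of $\Gamma'$) that ceases to be internal when passing from one stair to the other is flipped compared to the decreasing setting, and one has to trace these flips through \Cref{endpoints,ororverver} with care before invoking positivity. Once this case-by-case dictionary between subrepresentations of $\Calf$ and of $\Calf'$ is established, the stability estimates carry over verbatim from \Cref{propcoppia}.
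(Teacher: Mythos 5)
Your overall strategy is the intended one (the paper itself only says ``by following the same logic''), and your argument for $(1)\Rightarrow(3)$ correctly implements the horizontal/vertical swap: since the junction of the increasing linking stair is a horizontal cut, when $\Gamma'$ has no vertical right cut the common substair $\Gamma\cap\Gamma'$ is the complement of a submodule in $\Calf$ and a genuine submodule in $\Calf'$, which gives the contradiction. The problem is in your case analysis for $(3)\Rightarrow(1)$: the transport dictionary you state is the one for the \emph{decreasing} linking stair, copied without applying the swap, and two of your three cases are assigned the wrong outcome. Under hypothesis (3) the move of $\Gamma$ at the offset position is a rightward step. Consequently a submodule $\Cale'<\Calf'$ whose substair ends at the last box of $\Gamma'$ with only a vertical left cut (your second type) does \emph{not} transport to a submodule of $\Calf$: the box of $\Gamma$ carrying the label of that last box acquires a \emph{vertical right} cut, so $\Gamma_{\Cale}$ has two vertical cuts and $\theta_{\Calf}(\Cale)=0$. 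Conversely, a submodule whose substair starts at the first box of $\Gamma'$ with only a horizontal right cut (your third type) \emph{does} transport to a genuine submodule of $\Calf$, since its first box acquires a vertical \emph{left} cut; it does not acquire two horizontal cuts. Concretely, for $k=3$ with $\Gamma$ the row $\rho_0,\rho_1,\rho_2$ and $\Gamma'$ the stair with boxes $\rho_2,\rho_0,\rho_1$ cut from the increasing linking stair with offset $2$, the submodule $\rho_1\subset\Calf'$ is of your second type, yet $\rho_1$ sits at the middle box of the row, is not a submodule of $\Calf$, and has $\theta_{\Calf}(\rho_1)=0$.

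The final inequality $\theta(\Cale')>0$ happens to survive in both misassigned cases (one of $\theta_{\Calf}(\Cale)$, $\theta_{\Calf'}(\Cale')$ vanishes while the other is positive), so the proposition is not endangered, but as written your proof asserts false intermediate statements. The correct dictionary is the exact mirror of the one in \Cref{propcoppia}: a submodule of $\Calf'$ with a horizontal right cut transports to a submodule of $\Calf$; one with only the vertical left cut acquires two vertical cuts; and symmetrically for submodules of $\Calf$. You should also note that the transported substair can be disconnected when its label interval wraps around the end of $\Gamma$, so \Cref{ororverver} has to be applied component by component; the two components then contribute $+1$ and $-1$ to the favorite condition and still sum to zero, which is what rescues the estimate.
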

\subsection{Counting the chambers}
	\begin{remark}\label{minoreouguale} Propositions \ref{propcoppia} and \ref{propcoppia1} provide a way to build 1-dimensional families of nilpotent $G$-constellations. In particular, each of this families corresponds to some exceptional line in some $\Calm_C$. Moreover, the two gluings described in the definition of linking stair are  nothing but the two possible ways of deforming a toric $G$-constellation keeping the property of being nilpotent described in \Cref{defomations}. This implies that the families coming from \Cref{propcoppia} and \Cref{propcoppia1} are exactly the 1-dimensional families of nilpotent $G$-constellations appearing in the moduli spaces $\Calm_C$.
		
		An easy combinatorial computation tells us that the maximum number of chambers is $k!$. Indeed, if we start by a $G$-constellation $\Calf_1$ of maximum height $\mathfrak{h}(\Calf)=k$, i.e. $\Calf_1$ has one of the $k$ abstract $G$-stairs shown in \Cref{maxheight},
		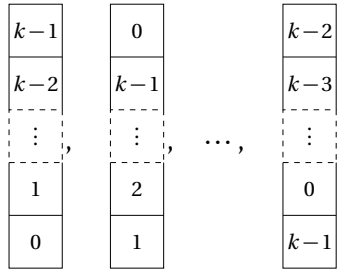
\begin{figure}[ht]$\begin{matrix}\scalebox{0.7}{
					\begin{tikzpicture}
						\node at (0.5,0.5) {\large
							0};
						\node at (0.5,1.5) {\large 1};
						\node at (0.5,2.6) {\large $ \vdots $};
						\node at (0.5,3.5) {\large $k-2$};
						\node at (0.5,4.5) {\large $k-1$};
						\draw (0,3)--(0,5)--(1,5)--(1,3);
						\draw (0,4)--(1,4);
						\draw (0,2)--(0,0)--(1,0)--(1,2);
						\draw (0,1)--(1,1);
						\draw[dashed] (0,2)--(1,2)--(1,3)--(0,3)--(0,2);
				\end{tikzpicture}}
			\end{matrix},\quad \begin{matrix}\scalebox{0.7}{
					\begin{tikzpicture}
						\node at (0.5,0.5) {\large 1};
						\node at (0.5,1.5) {\large 2};
						\node at (0.5,2.6) {\large $ \vdots $};
						\node at (0.5,3.5) {\large $k-1$};
						\node at (0.5,4.5) {\large $0$};
						\draw (0,3)--(0,5)--(1,5)--(1,3);
						\draw (0,4)--(1,4);
						\draw (0,2)--(0,0)--(1,0)--(1,2);
						\draw (0,1)--(1,1);
						\draw[dashed] (0,2)--(1,2)--(1,3)--(0,3)--(0,2);
				\end{tikzpicture}}
			\end{matrix},\quad \cdots,\quad \begin{matrix}\scalebox{0.7}{
					\begin{tikzpicture}
						\node at (0.5,0.5) {\large $k-1$};
						\node at (0.5,1.5) {\large 0};
						\node at (0.5,2.6) {\large $ \vdots $};
						\node at (0.5,3.5) {\large $k-3$};
						\node at (0.5,4.5) {\large $k-2$};
						\draw (0,3)--(0,5)--(1,5)--(1,3);
						\draw (0,4)--(1,4);
						\draw (0,2)--(0,0)--(1,0)--(1,2);
						\draw (0,1)--(1,1);
						\draw[dashed] (0,2)--(1,2)--(1,3)--(0,3)--(0,2);
				\end{tikzpicture}}
			\end{matrix}$
			\caption{The abstract $G$-stairs of maximum height.}
			\label{maxheight}
		\end{figure}
		we can construct toric $G$-constellations $\Calf_2,\ldots,\Calf_k$ with respective abstract $G$-stairs $\Gamma_{j}$ for $j=2,\ldots,k$ by recursively applying the prescriptions in \Cref{propcoppia}. Precisely, for any $j>1$, each $\Gamma_j$ is a connected substair, with horizontal left cut, of the decreasing linking stair of $\Gamma_{j-1}$.
		
		To conclude that the maximum number of chambers is $k!$, we notice that the $j$-th time that we apply \Cref{propcoppia} there are $k-j$ possible $G$-stairs with horizontal left cut in the decreasing linking stair of the abstract $G$-stair of $\Calf_j$. 
	\end{remark}
	\begin{theorem}\label{TEO1}If $G\subset \SL(2,\C)$ is a finite abelian subgroup of cardinality $k=|G|$, then the space of generic stability conditions $\Theta^{\gen}$ is the disjoint union of $k!$ chambers.
	\end{theorem}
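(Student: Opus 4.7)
The plan is to bound the number of chambers both above and below by $k!$.

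\emph{Upper bound.} By \Cref{hwchart}, each chamber $C$ determines $k$ toric $G$-constellations $\Calf_1^C,\ldots,\Calf_k^C$ of strictly decreasing heights $k,k-1,\ldots,1$, totally ordered by the chain of exceptional curves of $\Calm_C\to A_{k-1}$ as in \Cref{orderconst}. By \Cref{propcoppia}, each consecutive pair forces the abstract $G$-stair of $\Calf_{j+1}^C$ to be a substair of the decreasing linking stair of the $G$-stair of $\Calf_j^C$ with a horizontal left cut. Since distinct chambers give distinct sequences of toric points, the combinatorial count in the Remark preceding the statement bounds the number of chambers by $k\cdot(k-1)\cdots 1 = k!$.

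\emph{Lower bound.} For each admissible sequence $(\Calf_1,\ldots,\Calf_k)$ I will exhibit a generic $\theta\in\Theta^{\gen}$ stabilizing all $\Calf_j$ simultaneously; since the toric points of $\Calm_\theta$ are determined by the chamber containing $\theta$, this realizes every admissible sequence as the ordered list of toric points of a genuine chamber. Take
\[
\theta = \sum_{j=1}^{k} \bigl(\theta_{\Calf_j}+\varepsilon_j\bigr),
\]
with $\theta_{\Calf_j}$ the favorite condition of $\Calf_j$ and $\varepsilon_j$ a perturbation of the kind described in \Cref{propfavorite}. Given any proper indecomposable $G$-equivariant submodule $\Cale<\Calf_i$, the identity $\theta_{\Calf_j}(\Cale)=\theta_{\Calf_j}(\Cale_j)$, where $\Cale_j<\Calf_j$ shares the same irreducible summands as in the proof of \Cref{propcoppia}, reduces the stability check to controlling each term. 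Iterating \Cref{propcoppia} and \Cref{propcoppia1} along the sequence shows that, for every $j$, the substair $\Gamma_{\Cale_j}$ either defines a genuine $G$-equivariant submodule of $\Calf_j$ (so $\theta_{\Calf_j}(\Cale_j)>0$) or has two cuts of the same orientation (so $\theta_{\Calf_j}(\Cale_j)=0$ by \Cref{ororverver}); the case $j=i$ is always of the first kind. Summing gives $\sum_j \theta_{\Calf_j}(\Cale)>0$, and choosing the $\varepsilon_j$'s small enough preserves strict positivity while making $\theta$ generic.

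\emph{Main obstacle.} The heart of the argument is the combinatorial bookkeeping showing that, as $j$ ranges along the sequence, the cuts of $\Gamma_{\Cale_j}\subset\Gamma_{\Calf_j}$ evolve only in the two admissible ways above; in particular one must rule out the appearance of an isolated negative contribution $\theta_{\Calf_j}(\Cale_j)<0$ that no other term could compensate. The remark following \Cref{propfavorite}, alluding to a flow-on-quiver interpretation of favorite conditions, and the footnote after the proof of \Cref{propcoppia} hint at a slicker argument: constructing $\theta$ directly from the ordered sequence of $G$-stairs rather than by perturbing a sum of favorite conditions. I would look for an explicit formula expressing $\theta$ as a weighted sum of characters attached to the $k-1$ exceptional curves of $\Calm_C$, thus bypassing the $\varepsilon$-machinery and yielding both strict positivity and genericity at once.
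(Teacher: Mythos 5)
Your proposal is correct and follows essentially the same route as the paper: the upper bound via the combinatorial count of \Cref{minoreouguale}, and the lower bound by showing that the sum of favorite conditions $\sum_{j}\theta_{\Calf_j}$ stabilizes all $k$ toric constellations simultaneously, using exactly the monotone evolution of the cuts of $\Gamma_{\Cale_j}$ along the sequence to rule out a negative contribution. The only (cosmetic) difference is that the paper dispenses with your $\varepsilon_j$-perturbations: once every $\Calf_j$ is $\theta$-stable, $\Calm_\theta$ has $k$ torus fixed points, which already forces $\theta$ to be generic.
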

\begin{proof} 
		 It is enough to show that, if $\Calf_1,\ldots,\Calf_k$ are as in \Cref{minoreouguale}, then there exists a chamber 
	 $$C=\Theta_{\Calf_1}\cap\Theta_{\Calf_2}\cap\cdots\cap\Theta_{\Calf_k}\not=\emptyset,$$
	 such that $\Calf_j$ is $C$-stable for all $j=1,\ldots,k$. We claim that, if, for all $j=1,\ldots,k$, the favorite condition of $\Calf_j$ is $\theta_{\Calf_j}$, then
	 $$\theta=\ssum{j=1}{k}\theta_{\Calf_j}\in C.$$A priori, in order to prove the claim, we need to show both that $\theta$ is generic and that every $\Calf_j$ is $\theta$-stable. In fact, it is enough to show just that every $\Calf_j$ is $\theta$-stable, because this implies that $\Calm_\theta$ has $k$ torus fixed-points and, as a consequence, that $\theta $ is generic.
	 
	 Let $\Cale_j<\Calf_j$ be a proper $G$-equivariant indecomposable $\C[x,y]$-submodule of $\Calf_j$ with substair $\Gamma_{\Cale_j}\subset\Gamma_{\Calf_j}$. Suppose also that $\Cale_j=\underset{s=m}{\overset{n}{\bigoplus}}\rho_s$, where $0\le m\le n\le k-1$. We denote by $\Cale_i$, for $i=1,\ldots,j-1,j+1,\ldots,k$, the subrepresentation of $\Calf_i$ corresponding to $\Cale_j$, i.e. $$\Cale_i=\underset{s=m}{\overset{n}{\bigoplus}}\rho_s,\ \ \forall i=1,\ldots,j-1,j+1,\ldots,k.$$
	 Notice that
	 \begin{itemize}
	 	\item if $\Gamma_{\Cale_{j+1}}$ has two vertical cuts, then $\Gamma_{\Cale_{i}}$ has two vertical cuts for every $i>j+1$;  
	 	\item if $\Gamma_{\Cale_{j-1}}$ has two horizontal cuts, then $\Gamma_{\Cale_{i}}$ has two horizontal cuts for every $i<j-1$.
	 \end{itemize}
	 This is true because every time we increase (resp. decrease) the index $i$, we perform a horizontal left (resp. vertical right) cut in the decreasing (resp. increasing) linking stair which does not affect the vertical left (resp. horizontal right) cut of $\Gamma_{\Cale_{j+1}}$ (resp. $\Gamma_{\Cale_{j-1}}$).
	 
	 Hence, for all $i =1,\ldots,j-1,j+1,\ldots,k$, we have $\theta_{\Calf_i}(\Cale_j)\ge0 $ and, as a consequence
	 $$\theta(\Cale_j)=\left(\theta_{\Calf_j}+\ssum{i\not=j}{ }\theta_{\Calf_i}\right)(\Cale_j)>0.$$
	 
\end{proof}

\begin{remark} The proof of \Cref{TEO1} provides an alternative way to prove that
	$$\Theta_{\Calf}\not=\emptyset$$
	in \Cref{propfavorite} and, that
	$$\Theta_{\Calf}\cap\Theta_{\Calf'}\not=\emptyset$$
	 in the last part of the third point of the proof of \Cref{propcoppia}.
	 
	 For example, let $\Calf$ be a toric $G$-constellation with abstract $G$-stair of height $\mathfrak{h}(\Calf)=j$. We construct $\Calf_1,\ldots,\Calf_{j-1},\Calf_{j+1},\ldots,\Calf_k$ by recursively applying Propositions \ref{propcoppia} and \ref{propcoppia1}, i.e. 
	 \begin{itemize}
	 	\item if $i>j$, then $\Calf_i$ has, as $G$-stair, a $G$-substair, with a horizontal left cut, of the decreasing linking stair of $\Calf_{i-1}$,
	 	\item if $i<j$, then $\Calf_i$ has, as $G$-stair, a $G$-substair, with a vertical right cut, of the increasing linking stair of $\Calf_{i+1}$.
	 \end{itemize}
 Then, if $\theta = \theta_\Calf+\ssum{ }{ }\theta_{\Calf_i}$ is the sum of all favorite conditions, we have $\theta\in\Theta_\Calf.$
\end{remark}
	\section{Simple chambers}
	In this section we firstly introduce the notion of chamber stair. Roughly speaking, it is a stair that encodes all the data needed to reconstruct a chamber. Then, we define simple chambers, which are a particular kind of chambers with the property that any toric $G$-constellation belongs to at least one of them. Finally, we prove that there are exactly $k\cdot 2^{k-2}$ simple chambers.
	\begin{remark}Given a chamber $C\subset \Theta^{\gen}$ we can make a stair $\Gamma_C$ out of it and  and we say that $\Gamma_C$ is the chamber stair of $C$. 
		
		Let $\Calf_1,\ldots,\Calf_k$ be the toric $G$-constellations in $\Calm_C$. As explained in \Cref{propcoppia} (resp. \Cref{propcoppia1}), the abstract $G$-stairs $\Gamma_j,\Gamma_{j+1}$ of two consecutive $G$-constellations $\Calf_j,\Calf_{j+1}$ are substairs of the same stair $L$, namely the decreasing linking stair of $\Gamma_j$ (resp. the increasing linking stair of $\Gamma_{j+1}$). Moreover they have non-empty intersection in $L$.
		
		Now, if $\Gamma_1,\ldots,\Gamma_k$ are the respective abstract $G$-stairs of $\Calf_1,\ldots,\Calf_k$, we can construct a new abstract stair $\Gamma_C$ by gluing consecutive abstract $G$-stairs along their common parts.
	\end{remark}
	
	\begin{definition}\label{chamberstair}
		The \textit{abstract chamber stair of $C$} or the \textit{abstract $C$-stair} is the abstract stair $\Gamma_C$ obtained as described above.
	\end{definition}
\begin{example} Consider the case $G\cong \Z/5\Z$. \Cref{CHAMBER} explains how to build an abstract $C$-stair starting from the abstract $G$-stairs of the $G$-constellations in some chamber $C$.
	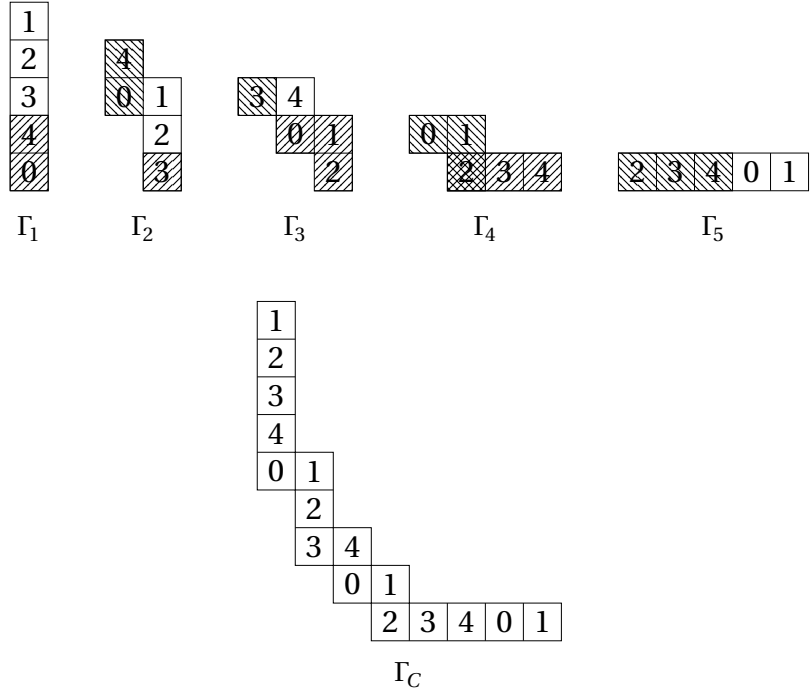
\begin{figure}[ht]
\scalebox{1}{\begin{tikzpicture}
		\node at (0,0) {
			\scalebox{0.5}{\begin{tikzpicture}
					\draw (0,0)--(0,5)--(1,5)--(1,0)--(0,0);
					\draw (0,1)--(1,1);
					\draw (0,2)--(1,2);
					\draw (0,3)--(1,3);
					\draw (0,4)--(1,4);
					\draw[pattern=north east lines] (0,1)--(0,0)--(1,0)--(1,2)--(0,2)--(0,1);
					
					\node at (0.5,0.5) {\Huge 0};
					\node at (0.5,1.5) {\Huge 4};
					\node at (0.5,2.5) {\Huge 3};
					\node at (0.5,3.5) {\Huge 2};
					\node at (0.5,4.5) {\Huge 1};
		\end{tikzpicture}}};
	\node at (1.5,-0.25) {
		\scalebox{0.5}{\begin{tikzpicture}
				\draw (2,0)--(0,0)--(0,2)--(1,2)--(1,1)--(2,1)--(2,-2)--(1,-2)--(1,0)--(0,0);
				\draw (0,1)--(1,1);
				\draw (1,-1)--(2,-1);
				\draw[pattern=north west lines] (0,0)--(0,2)--(1,2)--(1,0)--(0,0);
				\draw[pattern=north east lines] (1,-2)--(2,-2)--(2,-1)--(1,-1)--(1,-2);
				
				\node at (0.5,0.5) {\Huge 0};
				\node at (0.5,1.5) {\Huge 4};
				\node at (1.5,-1.5) {\Huge 3};
				\node at (1.5,-0.5) {\Huge 2};
				\node at (1.5,0.5) {\Huge 1};
	\end{tikzpicture}}};

\node at (3.5,-0.5) {
	\scalebox{0.5}{\begin{tikzpicture}
			\draw (0,0)--(1,0)--(1,-1)--(2,-1)--(2,-2)--(3,-2)--(3,0)--(2,0)--(2,1)--(0,1)--(0,0);
			\draw (1,1)--(1,0)--(2,0);
			\draw (2,0)--(2,-1)--(3,-1);
			\draw[pattern=north west lines] (0,0)--(0,1)--(1,1)--(1,0)--(0,0)--(0,1);
			\draw[pattern=north east lines] (2,-2)--(3,-2)--(3,0)--(1,0)--(1,-1)--(2,-1);
			
			\node at (0.5,0.5) {\Huge 3};
			\node at (2.5,-1.5) {\Huge 2};
			\node at (2.5,-0.5) {\Huge 1};
			\node at (1.5,-0.5) {\Huge 0};
			\node at (1.5,0.5) {\Huge 4};
\end{tikzpicture}}};
\node at (6,-0.75) {
	\scalebox{0.5}{\begin{tikzpicture}
			\draw (1,0)--(1,-1);
			\draw (3,-2)--(3,-1);
			\draw (1,-1)--(2,-1)--(2,-2);
			\draw (1,0)--(2,0)--(2,-1)--(4,-1)--(4,-2)--(1,-2)--(1,-1)--(0,-1)--(0,0)--(1,0);
			\draw[pattern=north west lines] (0,0)--(2,0)--(2,-2)--(1,-2)--(1,-1)--(0,-1)--(0,0);
			\draw[pattern=north east lines] (1,-1)--(4,-1)--(4,-2)--(1,-2);
			
			\node at (2.5,-1.5) {\Huge 3};
			\node at (1.5,-1.5) {\Huge 2};
			\node at (1.5,-0.5) {\Huge 1};
			\node at (0.5,-0.5) {\Huge 0};
			\node at (3.5,-1.5) {\Huge 4};
\end{tikzpicture}}};
\node at ( 9 ,-1) {
	\scalebox{0.5}{\begin{tikzpicture}
			\draw (1,0)--(5,0)--(5,1)--(0,1)--(0,0)--(1,0);
			\draw (1,0)--(1,1);
			\draw (2,0)--(2,1);
			\draw (3,0)--(3,1);
			\draw (4,0)--(4,1);
			\draw[pattern=north west lines] (0,0)--(3,0)--(3,1)--(0,1)--(0,0);
			
			\node at (0.5,0.5) {\Huge 2};
			\node at (1.5,0.5) {\Huge 3};
			\node at (2.5,0.5) {\Huge 4};
			\node at (3.5,0.5) {\Huge 0};
			\node at (4.5,0.5) {\Huge 1};
\end{tikzpicture}}};
\node at (0,-1.75) {$\Gamma_1$};
\node at (1.5,-1.75) {$\Gamma_2$};
\node at (3.5,-1.75) {$\Gamma_3$};
\node at (6,-1.75) {$\Gamma_4$};
\node at (9,-1.75) {$\Gamma_5$};
\end{tikzpicture}}\\$\ $\\
\begin{tikzpicture}
	\node at (0,0) {
		\scalebox{0.5}{\begin{tikzpicture}
				\draw (0,-4)--(0,-5)--(1,-5)--(1,-7)--(2,-7)--(2,-8)--(3,-8)--(3,-9)--(8,-9)--(8,-8)--(4,-8)--(4,-7)--(3,-7)--(3,-6)--(2,-6)--(2,-4)--(1,-4)--(1,0)--(0,0)--(0,-4);
				\draw (0,-1)--(1,-1);
				\draw (0,-2)--(1,-2);
				\draw (0,-3)--(1,-3);
				\draw (0,-4)--(1,-4)--(1,-5)--(2,-5);
				\draw (1,-6)--(2,-6)--(2,-7)--(3,-7)--(3,-8)--(4,-8)--(4,-9);
				\draw (5,-8)--(5,-9);
				\draw (6,-8)--(6,-9);
				\draw (7,-8)--(7,-9);
				\node at (0.5,-0.5) {\Huge 1};
				\node at (0.5,-1.5) {\Huge 2};
				\node at (0.5,-2.5) {\Huge 3};
				\node at (0.5,-3.5) {\Huge 4};
				\node at (0.5,-4.5) {\Huge 0};
				\node at (1.5,-4.5) {\Huge 1};
				\node at (1.5,-5.5) {\Huge 2};
				\node at (1.5,-6.5) {\Huge 3};
				\node at (2.5,-6.5) {\Huge 4};
				\node at (2.5,-7.5) {\Huge 0};
				\node at (3.5,-7.5) {\Huge 1};
				\node at (3.5,-8.5) {\Huge 2};
				\node at (4.5,-8.5) {\Huge 3};
				\node at (5.5,-8.5) {\Huge 4};
				\node at (6.5,-8.5) {\Huge 0};
				\node at (7.5,-8.5) {\Huge 1};
	\end{tikzpicture}}};
\node at (0,-2.7) {$\Gamma_C$};
\end{tikzpicture}
\caption{The abstract $C$-stair $\Gamma_C$ is obtained by gluing, along their common part, the abstract $\Z/5\Z$-stairs $\Gamma_i$ and $\Gamma_{i+1}$ for $i=1,\ldots,4$.}
\label{CHAMBER}
	\end{figure}
In particular, we have glued the boxes \scalebox{0.4}{\begin{tikzpicture}
		\draw[pattern=north east lines] (0,0)--(1,0)--(1,1)--(0,1)--(0,0)--(1,0);
\end{tikzpicture}} of an abstract $G$-stair with the boxes \scalebox{0.4}{\begin{tikzpicture}
\draw[pattern=north west lines] (0,0)--(1,0)--(1,1)--(0,1)--(0,0)--(1,0);
\end{tikzpicture}} of the next abstract $G$-stair.
\end{example}
	\begin{definition} A \textit{chamber stair associated to $C$} or a \textit{$C$-stair} is any realization $\widetilde{\Gamma}_C$ of the abstract chamber stair $\Gamma_C$ associated to $C$ as a subset of the representation tableau.
	\end{definition}
	\begin{remark}\label{CONCLUSION}
	Let $C\subset \Theta^{\gen}$ be a chamber and let $\Gamma_C\subset \Calt_G$ be a $C$-stair. Consider a $G$-stair $\Gamma\subset \Gamma_C$ of width $\mathfrak{w}(\Gamma)=j$ and the associated $G$-constellation $\Calf_\Gamma$. Let us also denote by $b,b'\in\Gamma$ the first and the last box of $\Gamma$. Suppose that $\Calf_\Gamma$ is not $C$-stable. Then, there are two consecutive $C$-stable $G$-constellations $\Calf$ and $\Calf'$ with associated respective $G$-stairs $\Gamma_{\Calf},\Gamma_{\Calf'}\subset\Gamma_C$ such that $b\in\Gamma_{\Calf}$ and $b'\in\Gamma_{\Calf'}$.
	
	Therefore, $\Gamma$ is a substair of both the decreasing linking stair $L$ of $\Gamma_\Calf$ and the increasing linking stair $L'$ of $\Gamma_{\Calf'}$. In particular, as a consequence of Propositions \ref{propcoppia} and \Cref{propcoppia1}, one and only one between the following two possibilities must occur, namely:
	\begin{equation}
	\label{conditions}
	    \begin{array}{l}
	       \mathfrak{w}(\Calf)=j-1,  \mathfrak{w}(\Calf')=j,   \mbox{ and $b$ (resp. $b'$) is a left (resp. right) horizontal cut of $\Gamma$ in $L$},  \\
	       \mathfrak{w}(\Calf)=j ,\mathfrak{w}(\Calf')=j+1,  \mbox{ and $b$ (resp. $b'$) is a right (resp. left) vertical cut of $\Gamma$ in $L$}.
	    \end{array}
	\end{equation}
	On the other hand, again as a consequence of \Cref{propcoppia} and \Cref{propcoppia1}, if $\Calf_\Gamma$ is $C$-stable, none of the conditions in \eqref{conditions} can hold true, and in this case $\Gamma$ has horizontal left cut and vertical right cut in $\Gamma_C$.
	
	Summing up, if $\Gamma\subset\Gamma_C$ is a connected $G$-substair associated to a toric $G$-constellation $\Calf_\Gamma$ then only the following two cases can occur:
	\begin{itemize}
	    \item the $G $-constellation $\Calf_\Gamma$ is $C$-stable and $\Gamma$ has a horizontal left cut and a vertical right cut, or
	    \item the $G $-constellation $\Calf_\Gamma$ is not $C$-stable and $\Gamma$ has two horizontal cuts or two vertical cuts.
	\end{itemize}
	\end{remark}
\begin{remark}\label{unicCstair} Different chambers have different abstract chamber stairs. 
	
	First, recall from \Cref{CONCLUSION} that, as per \Cref{propcoppia}, the $G$-stair of any toric $C$-stable $ G$-constellation has a vertical right cut in the $C$-stair and a horizontal right cut in the decreasing linking stair of the previous $G$-constellation.
	
	Suppose that two chambers $C$ and $C'$ have the same abstract chamber stair $\Gamma$. In particular, from the construction of abstract chamber stairs, it follows that $C$ and $C'$ have the same first (in the sense of \Cref{orderconst}) toric $G$-constellation. Suppose that $C$ and $C'$ differ for the $j$-th toric $G$-constellation. This translates into the fact that, if $\Calf_j$ and $\Calf_j'$ are the respective $j$-th $G$-constellations of $C$ and $C'$ and $\Gamma_j,\Gamma_j'$ are their abstract $G$-stairs, then
	$$\Gamma_j\not=\Gamma_j'.$$
	
	Let us denote by $\Calf_{j-1}$ the $(j-1)$-th toric $G$-constellation of $C$ (and $C'$) and by $\Gamma_{j-1}$ its abstract $G$-stair. Then, both $\Gamma_j$ and $\Gamma_j'$ are substairs of the decreasing linking stair $L_{j-1}$ of $\Gamma_{j-1}$ and they have horizontal right cut in $L_{j-1}$ as noticed above. Since, $\Gamma_{j-1},\Gamma_j$ and $\Gamma_j'$ are connected and $\Gamma_{j-1}\cap \Gamma_j,\Gamma_{j-1}\cap \Gamma_j'\not=\emptyset$ in $L_{j-1}$, it follows that:
	$$\Gamma_{j-1}\cup \Gamma_j\subsetneq \Gamma_{j-1}\cup \Gamma_j'\mbox{ or }\Gamma_{j-1}\cup \Gamma_j\supsetneq \Gamma_{j-1}\cup \Gamma_j'.$$
	
	Finally, if  without loss of generality  we suppose
	$$\Gamma_{j-1}\cup \Gamma_j\subsetneq \Gamma_{j-1}\cup \Gamma_j'\subset \Gamma,$$
	then we get a contradiction. Indeed, as noticed at the beginning, $\Gamma_{j}$ has a vertical right cut in $\Gamma$, but it has to have a horizontal right cut in  $\Gamma_{j-1}\cup \Gamma_j'$ because it is a connected substair of $L_{j-1}$ which strictly contains $\Gamma_{j}$.
	
\end{remark}
	\begin{remark}Since the abstract chamber stair $\Gamma_C$ of a chamber $C$ contains a copy of the abstract $G$-stairs of the toric $C$-stable $G$-constellations, we will think of such abstract $G$-stairs as substairs of $\Gamma_C$.
		
		Similarly, given a $C$-stair $\widetilde{\Gamma}_C\subset\Calt_G$ which realize $\Gamma_C$, we will realize the abstract $G$-stairs associated to the $G$-constellations in $C$ as substairs of $\widetilde{\Gamma}_C$.
	\end{remark}
	\begin{definition}\label{simplechamber}
	Given a chamber $C$, we  say that a toric $C$-stable $G$-constellation is \textit{$C$-characteristic} if its abstract $G$-stair has the same generators as the abstract $C$-stair, see \Cref{stair}.
	
		We say that a chamber $C$ is \textit{simple} if there is a toric $C$-stable $G$-constellation whose abstract $G$-stair has the same generators of the abstract $C$-stair, i.e. if there exists at least one $C$-characteristic $G$-constellation. 
	\end{definition}
	\begin{example}
		An example of a simple chamber is given by the chamber $C_G$ in \Cref{CRAWthm}, i.e. the chamber whose associated moduli space is $G$-$ \Hilb(\A^2) $. In particular, the abstract $C_G$-stair has only one generator, namely $\rho_0$.
	\end{example}
\begin{definition} Let $\Gamma$ be a $G$-stair and let $\rho_i$ and $\rho_j$ be its first and its last generators.
	\begin{itemize}
		\item 
		The \textit{left tail} of $\Gamma$ is the substair of $\Gamma$ given by
		\[\mathfrak{lt}(\Gamma)=\Set{y^s\cdot\rho_i|s>0}.\]
		\item 
		The \textit{right tail} of $\Gamma$ is the substair of $\Gamma$ given by
		\[\mathfrak{rt}(\Gamma)= \Set{x^s\cdot\rho_j|s>0}.\]
		\item The \textit{tail} of $\Gamma$ is the substair of $\Gamma$ given by
		$$\mathfrak{t}(\Gamma)=\mathfrak{lt}(\Gamma)\cup \mathfrak{rt}(\Gamma).$$
	\end{itemize}
Similarly one can define left/right tails for abstract $G$-stairs.
\end{definition}
\begin{remark}\label{stessigen} If two $G$-stairs $\Gamma$ and $\Gamma'$ have the same generators, then they differ by their tails, i.e. the following equality of subsets of the representation tableau holds true:
	$$\Gamma\smallsetminus\mathfrak{t}(\Gamma)=\Gamma'\smallsetminus\mathfrak{t}(\Gamma')$$
	In particular, if a $G$-stair $\Gamma$ has a tail of cardinality $m$, then there are $m+1$ $G$-stairs with the same generators as $\Gamma$.
	
	In simple words, the other $G$-stairs are obtained by moving some boxes from the left tail to the right tail (and viceversa) of $\Gamma$.
\end{remark}
	\begin{prop}\label{propsimple}The following properties are true.
		\begin{enumerate}
			\item Any toric $G$-constellation is $C$-stable for some simple chamber $C$. 
			\item   Given  a simple chamber $C$, and a $C$-characteristic $G$-constellation $\Calf$, there is an algorithm to produce all the toric $C$-stable constellations. 
			\item If $C$ is a simple chamber, all the toric $G$-constellations that admit a $G$-stair with the same generators as the $C$-stair belong to $C$, i.e. they are $C$-stable. In particular, they are $C$-characteristic.
		\end{enumerate} 
	\end{prop}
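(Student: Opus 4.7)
My plan is to establish the three assertions in the order $(3), (1), (2)$, since $(3)$ is the most structural statement and it feeds naturally into the others.

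For part $(3)$, I would fix a simple chamber $C$ with abstract chamber stair $\Gamma_C$, and let $\Gamma$ be the abstract $G$-stair of a $C$-characteristic constellation, so that $\Gamma$ has the same generators as $\Gamma_C$ by hypothesis. Given any abstract $G$-stair $\Gamma'$ with the same generator set as $\Gamma_C$, \Cref{stessigen} ensures that $\Gamma$ and $\Gamma'$ share a common skeleton and differ only in how the common tail is split between the left and right tails. Because $\Gamma_C$ is assembled in \Cref{chamberstair} by gluing together the $\Gamma_j$'s of the toric $C$-stable constellations, its left and right tails are long enough to accommodate every such redistribution, so $\Gamma'$ embeds as a connected $G$-substair of $\Gamma_C$ containing all of its generators. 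The leftmost box of $\Gamma'$ is then a generator of $\Gamma_C$ and hence has no box of $\Gamma_C$ directly above it, forcing any left internal endpoint to be horizontal; a symmetric argument at the right end produces a vertical right cut. \Cref{CONCLUSION} then identifies $\Gamma'$ with one of the $\Gamma_i$'s, so $\Calf_{\Gamma'}$ is $C$-stable, and hence $C$-characteristic.

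For part $(1)$, given a toric $G$-constellation $\Calf$ with abstract $G$-stair $\Gamma$ and $j = \mathfrak{w}(\Calf)$, I would construct a simple chamber $C$ in which $\Calf = \Calf_j$ is $C$-characteristic. Starting from $\Calf$, I apply \Cref{propcoppia} iteratively to produce $\Calf_{j+1},\ldots,\Calf_k$ and \Cref{propcoppia1} iteratively to produce $\Calf_{j-1},\ldots,\Calf_1$. At each step the decreasing (respectively increasing) linking stair of the current $G$-stair admits several substairs satisfying the relevant cut condition, and I would select the one that transfers only a tail box of the cumulative candidate chamber stair, so that no new generator is introduced outside the generator set of $\Gamma$. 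By \Cref{TEO1} the resulting family $\Calf_1,\ldots,\Calf_k$ is the set of toric points of a chamber $C$; inspection of the chamber stair built as in \Cref{chamberstair} then shows that its generators coincide with those of $\Gamma$, so $\Calf$ is $C$-characteristic and $C$ is simple.

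Part $(2)$ is the inverse of the construction just described. Starting from a $C$-characteristic $\Calf$ with $G$-stair $\Gamma$, the algorithm applies \Cref{propcoppia} and \Cref{propcoppia1} alternately; at each step, by $(3)$ together with the uniqueness of chamber stairs (\Cref{unicCstair}), exactly one substair of the relevant linking stair is compatible with the generator set of $\Gamma_C$ and yields the next toric $C$-stable constellation. After $k-1$ iterations in the two directions the algorithm outputs the full list of toric $C$-stable constellations.

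The main obstacle will be justifying the feasibility of the "tail-only" shift in part $(1)$: I need to check that at every inductive step there is indeed a substair of the appropriate linking stair whose associated move transfers only tail boxes and introduces no new generator beyond the generators of $\Gamma$. This comes down to a combinatorial case analysis based on whether the current left or right tail is already exhausted and how the skeleton of the current $G$-stair sits inside the linking stair of \Cref{deflink}; the analysis ultimately reduces to the explicit geometry of $G$-stairs and their tails recalled in \Cref{stair}.
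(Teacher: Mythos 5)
Your strategy for parts (1) and (2) is essentially the paper's: it too builds, from a given toric $G$-constellation, the simple chamber in which that constellation is characteristic by iterating \Cref{propcoppia} and \Cref{propcoppia1} and then invoking \Cref{TEO1}, with the prescription that one must always perform the \emph{first} possible horizontal cut (resp.\ the \emph{last} possible vertical cut) in the linking stair; your ``transfer only a tail box'' rule is that prescription in different words, and the feasibility issue you flag at the end is treated just as briskly in the paper. Where you genuinely diverge is part (3): the paper proves it by running the same construction starting from the same-generator stair with empty right tail and observing that the first $m$ first-cut moves enumerate exactly the $m+1$ stairs of \Cref{stessigen}, all $C$-stable by \Cref{TEO1}, whereas you embed an arbitrary same-generator stair $\Gamma'$ directly into $\Gamma_C$ and read off stability from the dichotomy of \Cref{CONCLUSION}. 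Your route is viable and arguably more transparent, since it bypasses the enumeration.

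Two local points in your part (3) need repair, though both are fixable. First, the ``tails of $\Gamma_C$ are long enough'' claim is asserted, not proved; it holds because $\Gamma_1$ (resp.\ $\Gamma_k$) is a width-one column (resp.\ height-one row) of $k$ boxes whose unique generator is the first (resp.\ last) generator of $\Gamma_C$, so $\Gamma_C$ extends $k-1\ge m$ boxes above $v_1$ and to the right of $v_s$, where $m$ is the common tail length. Second, your cut-type argument has the directions backwards: the first box of $\Gamma'$ is a generator of $\Gamma_C$ only when the left tail of $\Gamma'$ is empty, and what forces the left internal endpoint to be a \emph{horizontal} cut is the absence of a box of $\Gamma_C$ to its \emph{left}; the box \emph{above} it generally does lie in $\Gamma_C$, and that is precisely what makes the cut horizontal ($y\cdot v\in\Gamma_C\smallsetminus\Gamma'$ in the paper's definition). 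The correct statement is that the first box of $\Gamma'$ always sits in the width-one column of $\Gamma_C$ above (or at) $v_1$, hence has no left neighbour in $\Gamma_C$, and dually at the right end. With these corrections your argument for (3) goes through.
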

	\begin{proof} Let $\Gamma_C$ be the abstract $C$-stair. We  prove the first two points in a constructive way. In order to do so, we  show that, given a toric $G$-constellation $\Calf$, there is a unique simple chamber $C$ such that $\Calf$ is $C$-characteristic.
		
		Let $\Calf$ be a toric $G$-constellation with associated abstract $G$-stair $\Gamma_\Calf$ of height $\mathfrak{h}(\Calf)=j$. In order to build a chamber starting from $\Calf$, we have to first recursively apply Propositions \ref{propcoppia} and \ref{propcoppia1} $j-1$ times and $k-j$ times respectively, to obtain $k$ toric constellations
		$$\Calf_1,\ldots,\Calf_{j-1},\Calf,\Calf_{j+1},\ldots,\Calf_k$$
		and, finally, apply \Cref{TEO1} to conclude that there exists a chamber $C$ such that the constellations $\Calf_1,\ldots,\Calf_{j-1},\Calf,\Calf_{j+1},\ldots,\Calf_k$ correspond to the toric points of $\Calm_C$.
		
		The condition that the chamber must be simple translates into the fact that, at every step, no new generators appear. This may be only achieved by performing, every time that we apply \Cref{propcoppia} (resp. \Cref{propcoppia1}), the first (resp. last) possible horizontal (resp. vertical) cut in the decreasing (resp. increasing) linking stair.
		
		In order to prove the last point, we start by considering a $G$-constellation $\Calf$ whose abstract $G$-stair $\Gamma_\Calf$ has the same generators as the $C$-stair and such that it has empty right tail, i.e. $\mathfrak{t}(\Gamma_\Calf)=\mathfrak{lt}(\Gamma_\Calf)$.
		
		Let $m=\#\mathfrak{lt}(\Gamma_\Calf)$ be the cardinality of the left tail of $\Gamma_\Calf$. The first $m$ times we apply \Cref{propcoppia} by performing the first possible horizontal cut we increase the cardinality of $\mathfrak{rt}(\Gamma_\Calf)$ by 1 and, consequently, we decrease the cardinality of $\mathfrak{lt}(\Gamma_\Calf)$ by 1. In this way we find, as explained in \Cref{stessigen}, all the toric $G$-constellations which admit a $G$-stair with the same generators as the $C$-stair and all of them are $C$-stable by \Cref{TEO1}. 
	\end{proof}

\begin{lemma}\label{genconst} Let $\Gamma$ be a $G$-stair. Then $\Gamma$ has at most 
	$$\left\lfloor \frac{k+1}{2} \right\rfloor$$
	generators.
\end{lemma}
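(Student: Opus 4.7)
The plan is to exploit the rigid shape forced on a $G$-stair by the combination of connectedness and the diagonal-freeness condition, reduce the count of generators to a count of certain columns, and then conclude by a simple counting inequality.

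First I would decompose $\Gamma$ (more precisely, $\pi_{\N^2}(\Gamma)$) into its columns. Let $n$ be the number of nonempty columns, index them $i = 1, \dots, n$ from left to right, and let column $i$ occupy rows $[r_i^{\mathrm{low}}, r_i^{\mathrm{high}}]$ with $c_i = r_i^{\mathrm{high}} - r_i^{\mathrm{low}} + 1$ boxes, so that $\sum_{i=1}^n c_i = k$. Edge-connectedness of $\Gamma$ forces $[r_i^{\mathrm{low}}, r_i^{\mathrm{high}}] \cap [r_{i+1}^{\mathrm{low}}, r_{i+1}^{\mathrm{high}}] \neq \emptyset$ for all $i$, while the stair condition applied to pairs of boxes in columns $i$ and $i+1$ says that no $j$ satisfies $j \in [r_i^{\mathrm{low}}, r_i^{\mathrm{high}}]$ together with $j+1 \in [r_{i+1}^{\mathrm{low}}, r_{i+1}^{\mathrm{high}}]$. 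Combining these two conditions yields the rigid identity
\[
r_{i+1}^{\mathrm{high}} = r_i^{\mathrm{low}}, \qquad i = 1, \dots, n-1,
\]
so $\Gamma$ is a strictly descending staircase in which the top of each column lies at the same row as the bottom of the previous one.

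Next I would identify the generators in terms of this column decomposition. A box $(i,r) \in \Gamma$ is a generator exactly when neither $(i-1,r)$ nor $(i,r-1)$ belongs to $\Gamma$. Any non-bottom box in a column has the box directly below it still in $\Gamma$, so it is never a generator. The bottom box of column $1$ has nothing to its left and nothing below it, hence is a generator. For $i \geq 2$, the bottom box of column $i$ sits at row $r_i^{\mathrm{low}}$; using $r_i^{\mathrm{high}} = r_{i-1}^{\mathrm{low}}$, one sees $(i-1, r_i^{\mathrm{low}}) \in \Gamma$ if and only if $r_i^{\mathrm{low}} = r_i^{\mathrm{high}}$, i.e. $c_i = 1$. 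Hence, for $i \geq 2$, column $i$ contributes exactly one generator when $c_i \geq 2$ and none when $c_i = 1$.

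Setting $m = \#\{\, i \geq 2 \mid c_i \geq 2 \,\}$, the total number of generators of $\Gamma$ is $g = 1 + m$. Estimating $k$ column by column, with column $1$ contributing at least $1$, the $m$ counted columns at least $2$, and the remaining $n-1-m$ columns (with $i \geq 2$) at least $1$, one obtains
\[
k \;=\; \sum_{i=1}^n c_i \;\geq\; 1 + 2m + (n - 1 - m) \;=\; n + m.
\]
Since $m \leq n-1$ trivially, we have $m+1 \leq n \leq k - m$, so $2m \leq k-1$ and therefore $m \leq \lfloor (k-1)/2 \rfloor$. Combining,
\[
g \;=\; 1 + m \;\leq\; 1 + \left\lfloor \frac{k-1}{2} \right\rfloor \;=\; \left\lfloor \frac{k+1}{2} \right\rfloor.
\]
The main obstacle is the rigid structural identity $r_{i+1}^{\mathrm{high}} = r_i^{\mathrm{low}}$: everything else is bookkeeping, but this step is the one place where connectedness (as defined in the paper, via shared edges, not just shared corners) and the diagonal-freeness really interact, and it must be argued carefully from the two inequalities they impose on the pair of intervals $[r_i^{\mathrm{low}}, r_i^{\mathrm{high}}]$ and $[r_{i+1}^{\mathrm{low}}, r_{i+1}^{\mathrm{high}}]$.
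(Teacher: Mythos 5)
Your proof is correct and rests on the same idea as the paper's: the paper simply observes (via a figure) that a stair with $r$ generators must contain at least $2r-1$ boxes, hence $2r-1\le k$, which is exactly your final inequality $k\ge n+m\ge 2m+1=2g-1$. Your column decomposition and the identity $r_{i+1}^{\mathrm{high}}=r_i^{\mathrm{low}}$ just make rigorous the staircase shape that the paper takes for granted from its picture, so this is a more detailed write-up of the same counting argument rather than a different route.
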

\begin{proof}The statement follows from the following observation. If a stair has $r$ generators, then it has at least $2r-1$ boxes, as shown in \Cref{numgens}.
	\begin{figure}[H]\scalebox{0.6}{
			\begin{tikzpicture}[scale=0.7]
				\draw (0,0)--(0,1)--(1,1)--(1,0)--(0,0)--(0,1);
				\draw (2,0)--(2,1)--(3,1)--(3,0)--(2,0)--(2,1);
				\draw (2,-2)--(2,-1)--(3,-1)--(3,-2)--(2,-2)--(2,-1);
				\draw (4,-2)--(4,-1)--(5,-1)--(5,-2)--(4,-2)--(4,-1);
				\draw (5,-3)--(5,-2)--(6,-2)--(6,-3)--(5,-3)--(5,-2);
				\draw (5,-5)--(5,-4)--(6,-4)--(6,-5)--(5,-5)--(5,-4);
				\node at (0.5,1.6) {$\vdots$};
				\node at (1.5,0.5) {$\cdots$};
				\node at (2.5,-0.4) {$\vdots$};
				\node at (3.5,-1.5) {$\cdots$};
				\node at (5.5,-1.5) {$\ddots$};
				\node at (5.5,-3.4) {$\vdots$};
				\node at (6.5,-4.5) {$\cdots$};
				\node at (2,-3) {\rotatebox{-45}{$\underbrace{\hspace{5cm}}$}};
				\node at (5,-0.5) {\rotatebox{-45}{$\overbrace{\hspace{3.5cm}}$}};
				\node[left] at (2,-3.5) {$r$};
				\node[right] at (5,0) {$r-1$};
		\end{tikzpicture}}
		\caption{\ }
		\label{numgens}
	\end{figure}
	Now, a $G$-stair has exactly $k$ boxes. Hence,
	$$r\le \left\lfloor \frac{k+1}{2} \right\rfloor.$$
\end{proof}

	\begin{example}\label{hilbop}
		Non-simple chambers exist.
		
		As already mentioned in \Cref{CRAWthm}, there is a chamber $C_G$ such that $G$-$\Hilb(\A^2)\cong\Calm_{C_G}$ as moduli spaces. In particular,
		\[C_G\subset\Set{\theta\in\Theta | \theta_0<0,\ \theta_i>0\ \forall i=1,\ldots,k-1 },\]
		and the abstract $G$-stairs of its toric constellations are shown in \Cref{torichilb}.
		\begin{figure}[H]
			\scalebox{1}{
			\begin{tikzpicture}
				\node at (0,-2.5) {$\Gamma_{\Calf_1}$};
					\node at (0,0) {$\begin{matrix}
						\begin{tikzpicture}[scale=0.7]
						\draw[dashed](0,2)--(0,3);
						\draw[dashed](1,2)--(1,3);
							\draw (1,1)--(1,2)--(0,2)--(0,0)--(1,0)--(1,1)--(0,1);
							\draw (1,4)--(1,6)--(0,6)--(0,3)--(1,3)--(1,4)--(0,4);
							\draw (1,5)--(0,5);
							\node at (0.5,2.6) {$\vdots$};
							\node at (0.5,0.5) {\small$0$};
							\node at (0.5,1.5) {\small$k$-$1$};
							\node at (0.5,3.5) {\small$3$};
							\node at (0.5,4.5) {\small$2$};
							\node at (0.5,5.5) {\small$1$};
						\end{tikzpicture}
					\end{matrix}$};
				\node at (1.5,-2.5) {$\Gamma_{\Calf_2}$};
				\node at (1.5,-0.35) {$\begin{matrix}
						\begin{tikzpicture}[scale=0.7]
						\draw[dashed](0,2)--(0,3);
						\draw[dashed](1,2)--(1,3);
							\draw (1,0)--(2,0)--(2,1)--(1,1);
							\draw (1,1)--(1,2)--(0,2)--(0,0)--(1,0)--(1,1)--(0,1);
							\draw (1,4)--(1,5)--(0,5)--(0,3)--(1,3)--(1,4)--(0,4);
							\node at (0.5,2.6) {$\vdots$};
							\node at (0.5,0.5) {\small$0$};
							\node at (0.5,1.5) {\small$k$-$1$};
							\node at (0.5,3.5) {\small$3$};
							\node at (0.5,4.5) {\small$2$};
							\node at (1.5,0.5) {\small$1$};
						\end{tikzpicture}
					\end{matrix}$};
				\node at (3.7,-2.5) {$\Gamma_{\Calf_3}$};
			\node at (3.7,-0.7) {$\begin{matrix}
					\begin{tikzpicture}[scale=0.7]
						\draw[dashed](0,2)--(0,3);
						\draw[dashed](1,2)--(1,3);
						\draw (2,0)--(2,1);
						\draw (1,0)--(3,0)--(3,1)--(1,1);
						\draw (1,1)--(1,2)--(0,2)--(0,0)--(1,0)--(1,1)--(0,1);
						\draw (1,3)--(1,4)--(0,4)--(0,3)--(1,3)--(1,4);
						\node at (0.5,2.6) {$\vdots$};
						\node at (0.5,0.5) {\small$0$};
						\node at (0.5,1.5) {\small$k$-$1$};
						\node at (0.5,3.5) {\small$3$};
						\node at (2.5,0.5) {\small$2$};
						\node at (1.5,0.5) {\small$1$};
					\end{tikzpicture}
				\end{matrix}$};
			\node at (5.3,-1.4) {$\cdots$};
			\node at (5.3,-2.5) {$\cdots$};
			\node at (7.7,-2.5) {$\Gamma_{\Calf_{k-1}}$};
			\node at (7.7,-1.4) {$\begin{matrix}
					\begin{tikzpicture}[scale=0.7]
						\draw[dashed](2,0)--(3,0);
						\draw[dashed](2,1)--(3,1);
						\draw (1,1)--(1,2)--(0,2)--(0,0)--(1,0)--(1,1)--(0,1);
						\draw (1,0)--(2,0)--(2,1)--(1,1);
						\node at (2.5,0.5) {$\cdots$};
						\draw (4,1)--(3,1)--(3,0)--(5,0)--(5,1)--(4,1)--(4,0);
						\node at (0.5,0.5) {\small$0$};
						\node at (0.5,1.5) {\small$k$-$1$};
						\node at (4.5,0.5) {\small$k$-$2$};
						\node at (3.5,0.5) {\small$k$-$3$};
						\node at (1.5,0.5) {\small$1$};
					\end{tikzpicture}
				\end{matrix}$};
			\node at (12,-2.5) {$\Gamma_{\Calf_k}$.};
		\node at (12,-1.75) {$\begin{matrix}
				\begin{tikzpicture}[scale=0.7]
						\draw[dashed](2,0)--(3,0);
						\draw[dashed](2,1)--(3,1);
					\draw (1,1)--(1,0)--(0,0)--(0,1)--(1,1);
					\draw (1,0)--(2,0)--(2,1)--(1,1);
					\node at (2.5,0.5) {$\cdots$};
					\draw (4,1)--(3,1)--(3,0)--(5,0)--(5,1)--(4,1)--(4,0);
					\draw (5,0)--(6,0)--(6,1)--(5,1);
					\node at (0.5,0.5) {\small$0$};
					\node at (5.5,0.5) {\small$k$-$1$};
					\node at (4.5,0.5) {\small$k$-$2$};
					\node at (3.5,0.5) {\small$k$-$3$};
					\node at (1.5,0.5) {\small$1$};
				\end{tikzpicture}
			\end{matrix}$};
		\end{tikzpicture}}
			\caption{The abstract $G$-stairs of the $C_G$-stable toric $G$-constellations.}
			\label{torichilb}
		\end{figure} 
	Notice that, for $i=1,\ldots,k$ and $j=0,\ldots,k-1$, the favorite conditions $\theta_{\Calf_i}$ are defined by
	$$(\theta_{\Calf_i})_j=\begin{cases}
		-2&\mbox{if } j=0 \ \&\ i\not=1,k,\\
		-1&\mbox{if } j=0 \ \&\ (i=1 \mbox{ or } i=k),\\
		1&\mbox{if } j=i-1\not=0,\\
		1&\mbox{if } j=i ,\\
		0& \mbox{otherwise.}
	\end{cases}$$
	and that the condition $$\theta=\ssum{i=1}{k}\theta_{\Calf_i}=(-2k+2,\underbrace{2,\ldots,2}_{k-1})$$ belongs to $C_G$. More precisely, the moduli space $G$-$\Hilb(\A^2)$ parametrises all the toric $G$-constellations generated by the trivial representation. As a consequence, the abstract $G$-stairs $\Gamma_{\Calf_i}$, for $i=1,\ldots,k$, have  as only generator  the trivial representation.
	
		Let us reverse this property by asking the presence of just one antigenerator, for example, the trivial representation. It is easy to see that there is a chamber $C_G^{\OP}$ whose toric $G$-constellations, as requested, have the abstract $G$-stairs  in \Cref{torichilbop}.
		\begin{figure}[ht]
			\scalebox{1}{
				\begin{tikzpicture}
					\node at (0,-2.5) {$\Gamma_{\Calf_1'}$};
					\node at (0,0) {$\begin{matrix}
							\begin{tikzpicture}[scale=0.7]
						\draw[dashed](0,2)--(0,3);
						\draw[dashed](1,2)--(1,3);
								\draw (1,1)--(1,2)--(0,2)--(0,0)--(1,0)--(1,1)--(0,1);
								\draw (1,4)--(1,6)--(0,6)--(0,3)--(1,3)--(1,4)--(0,4);
								\draw (1,5)--(0,5);
								\node at (0.5,2.6) {$\vdots$};
								\node at (0.5,0.5) {\small$k$-1};
								\node at (0.5,1.5) {\small$k$-2};
								\node at (0.5,3.5) {\small$2$};
								\node at (0.5,4.5) {\small$1$};
								\node at (0.5,5.5) {\small$0$};
							\end{tikzpicture}
						\end{matrix}$};
					\node at (1.5,-2.5) {$\Gamma_{\Calf_2'}$};
					\node at (1.5,-0.35) {$\begin{matrix}
							\begin{tikzpicture}[scale=0.7]
						\draw[dashed](0,2)--(0,3);
						\draw[dashed](1,2)--(1,3);
								\draw (0,5)--(-1,5)--(-1,4)--(0,4);
								\draw (1,1)--(1,2)--(0,2)--(0,0)--(1,0)--(1,1)--(0,1);
								\draw (1,4)--(1,5)--(0,5)--(0,3)--(1,3)--(1,4)--(0,4);
								\node at (0.5,2.6) {$\vdots$};
								\node at (0.5,0.5) {\small$k$-2};
								\node at (0.5,1.5) {\small$k$-3};
								\node at (0.5,3.5) {\small$1$};
								\node at (0.5,4.5) {\small$0$};
								\node at (-0.5,4.5) {\small$k$-1};
							\end{tikzpicture}
						\end{matrix}$};
					\node at (3.7,-2.5) {$\Gamma_{\Calf_3'}$};
					\node at (3.7,-0.7) {$\begin{matrix}
							\begin{tikzpicture}[scale=0.7]
						\draw[dashed](0,2)--(0,1);
						\draw[dashed](1,2)--(1,1);
								\draw (-1,3)--(-1,4);
								\draw (0,3)--(-2,3)--(-2,4)--(0,4);
								\draw (0,3)--(0,2)--(1,2)--(1,3);
								\draw (0,0)--(1,0)--(1,1)--(0,1)--(0,0)--(1,0);
								\draw (1,3)--(1,4)--(0,4)--(0,3)--(1,3);
								\node at (0.5,1.6) {$\vdots$};
								\node at (0.5,0.5) {\small$k$-3};
								\node at (0.5,2.5) {\small1};
								\node at (0.5,3.5) {\small$0$};
								\node at (-0.5,3.5) {\small$k$-1};
								\node at (-1.5,3.5) {\small$k$-2};
							\end{tikzpicture}
						\end{matrix}$};
					\node at (5.3,-1.4) {$\cdots$};
					\node at (5.3,-2.5) {$\cdots$};
					\node at (7.7,-2.5) {$\Gamma_{\Calf_{k-1}'}$};
					\node at (7.7,-1.4) {$\begin{matrix}
							\begin{tikzpicture}[scale=0.7]
						\draw[dashed](2,0)--(3,0);
						\draw[dashed](2,1)--(3,1);
								\draw (4,0)--(4,-1)--(5,-1)--(5,0);
								\draw (0,0)--(1,0)--(1,1)--(0,1)--(0,0)--(1,0);
								\draw (1,0)--(2,0)--(2,1)--(1,1);
								\node at (2.5,0.5) {$\cdots$};
								\draw (4,1)--(3,1)--(3,0)--(5,0)--(5,1)--(4,1)--(4,0);
								\node at (0.5,0.5) {\small2};
								\node at (4.5,-0.5) {\small1};
								\node at (4.5,0.5) {\small0};
								\node at (3.5,0.5) {\small$k$-1};
								\node at (1.5,0.5) {\small3};
							\end{tikzpicture}
						\end{matrix}$};
					\node at (12,-2.5) {$\Gamma_{\Calf_k'}$};
					\node at (12,-1.75) {$\begin{matrix}
							\begin{tikzpicture}[scale=0.7]
						\draw[dashed](2,0)--(3,0);
						\draw[dashed](2,1)--(3,1);
								\draw (1,1)--(1,0)--(0,0)--(0,1)--(1,1);
								\draw (1,0)--(2,0)--(2,1)--(1,1);
								\node at (2.5,0.5) {$\cdots$};
								\draw (4,1)--(3,1)--(3,0)--(5,0)--(5,1)--(4,1)--(4,0);
								\draw (5,0)--(6,0)--(6,1)--(5,1);
								\node at (0.5,0.5) {\small$1$};
								\node at (5.5,0.5) {\small$0$};
								\node at (4.5,0.5) {\small$k$-1};
								\node at (3.5,0.5) {\small$k$-$2$};
								\node at (1.5,0.5) {\small$2$};
							\end{tikzpicture}
						\end{matrix}$};
			\end{tikzpicture}}
			\caption{The abstract $G$-stairs of the $C_G^{\OP}$-stable toric $G$-constellations.}
			\label{torichilbop}
		\end{figure}
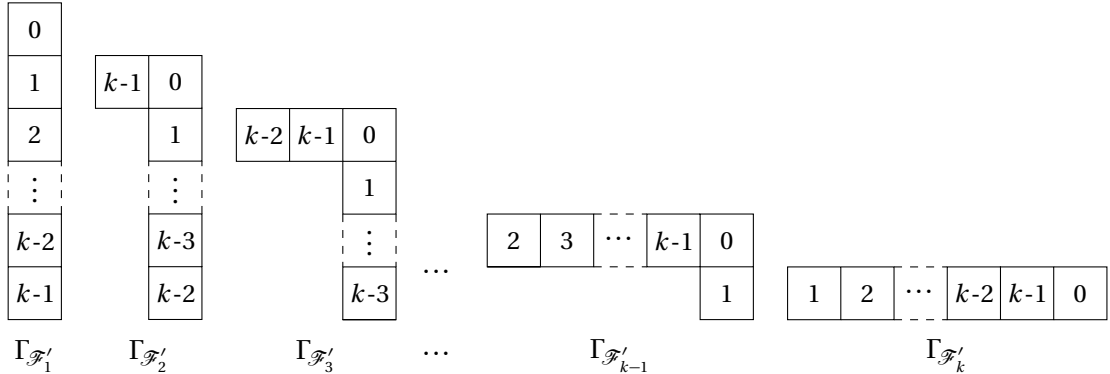 
		In particular,
		\[C_G^{\OP}\subset\Set{\theta\in\Theta|\theta_0>0,\ \theta_i<0\ \forall i=1,\ldots,k-1}.\]
		We denote the associated moduli space by
		$$G\mbox{-}\Hilb^{\OP}(\A^2):= \Calm_{C_G^{\OP}}.$$
		Notice that, while $C_{\Z/3\Z}^{\OP}$ is simple, $C_{\Z/k\Z}^{\OP}$ is not simple for $k\ge 4$ because the number of generators of the $C_{\Z/k\Z}^{\OP}$-stair is
		$$k-1>\left\lfloor\frac{k+1}{2}\right\rfloor \ \ \forall k\ge 4.$$
		Therefore, as a consequence of \Cref{genconst}, there is no $ C_{\Z/k\Z}^{\OP}$-characteristic $G$-constellation.

		We show, as an example, the abstract chamber stairs of $C_G$ and $C_G^{\OP}$ in the case $k=5$.
		\begin{figure}[H]
			\begin{tikzpicture}
				\node at (0,0) { 	$\begin{matrix}
						\scalebox{0.5}{
							\begin{tikzpicture}
								\draw (0,-4)--(0,-5)--(5,-5)--(5,-4)--(1,-4)--(1,0)--(0,0)--(0,-4);
								\draw (0,-1)--(1,-1);
								\draw (0,-2)--(1,-2);
								\draw (0,-3)--(1,-3);
								\draw (0,-4)--(1,-4)--(1,-5);
								\draw (2,-4)--(2,-5);
								\draw (3,-4)--(3,-5);
								\draw (4,-4)--(4,-5);
								
								\node at (0.5,-0.5) {\Large1};
								\node at (0.5,-1.5) {\Large2};
								\node at (0.5,-2.5) {\Large3};
								\node at (0.5,-3.5) {\Large4};
								\node at (0.5,-4.5) {\Large0};
								\node at (1.5,-4.5) {\Large1};
								\node at (2.5,-4.5) {\Large2};
								\node at (3.5,-4.5) {\Large3};
								\node at (4.5,-4.5) {\Large4};
						\end{tikzpicture}}
					\end{matrix}$};
				\node at (7,1.5 ) {	$\begin{matrix}\scalebox{0.5}{
							\begin{tikzpicture}
								\draw (0,-4)--(0,-5)--(1,-5)--(1,-8)--(3,-8)--(3,-10)--(6,-10)--(6,-11)--(11,-11)--(11,-10)--(7,-10)--(7,-9)--(4,-9)--(4,-7)--(2,-7)--(2,-4)--(1,-4)--(1,0)--(0,0)--(0,-4);
								\draw (0,-1)--(1,-1);
								\draw (0,-2)--(1,-2);
								\draw (0,-3)--(1,-3);
								\draw (0,-4)--(1,-4)--(1,-5)--(2,-5);
								\draw (1,-6)--(2,-6);
								\draw (1,-7)--(2,-7)--(2,-8);
								\draw (3,-7)--(3,-8)--(4,-8);
								\draw (3,-9)--(4,-9)--(4,-10);
								\draw (5,-9)--(5,-10);
								\draw (6,-9)--(6,-10)--(7,-10)--(7,-11);
								\draw (8,-10)--(8,-11);
								\draw (9,-10)--(9,-11);
								\draw (10,-10)--(10,-11);
								
								\node at (0.5,-0.5) {\Large0};
								\node at (0.5,-1.5) {\Large1};
								\node at (0.5,-2.5) {\Large2};
								\node at (0.5,-3.5) {\Large3};
								\node at (0.5,-4.5) {\Large4};
								\node at (1.5,-4.5) {\Large0};
								\node at (1.5,-5.5) {\Large1};
								\node at (1.5,-6.5) {\Large2};
								\node at (1.5,-7.5) {\Large3};
								\node at (2.5,-7.5) {\Large4};
								\node at (3.5,-7.5) {\Large0};
								\node at (3.5,-8.5) {\Large1};
								\node at (3.5,-9.5) {\Large2};
								\node at (4.5,-9.5) {\Large3};
								\node at (5.5,-9.5) {\Large4};
								\node at (6.5,-9.5) {\Large0};
								\node at (6.5,-10.5) {\Large1};
								\node at (7.5,-10.5) {\Large2};
								\node at (8.5,-10.5) {\Large3};
								\node at (9.5,-10.5) {\Large4};
								\node at (10.5,-10.5) {\Large0};
						\end{tikzpicture}}
					\end{matrix}$};
			\end{tikzpicture}
			\caption{ The abstract {$C_{\Z/5\Z}$-stair} and the abstract {$C_{\Z/5\Z}^{\OP}$-stair.}}
		\end{figure}
	\end{example}

	\begin{theorem}\label{teosimple} If $G\subset \SL(2,\C)$ is a finite abelian subgroup of cardinality $k=|G|$, then the space of generic stability conditions $\Theta^{\gen}$ contains $k\cdot 2^{k-2}$ simple chambers.
	\end{theorem}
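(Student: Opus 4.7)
Proof proposal:  The strategy is to attach to each simple chamber a combinatorial ``core shape'' together with a representation $\sigma_1 \in \Irr(G)$, show that this assignment is a bijection onto an explicit set of admissible tuples, and then count the tuples directly.

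First I would associate to a simple chamber $C$ its invariant.  By Definition~\ref{simplechamber} pick any $C$-characteristic $\Calf$; its abstract $G$-stair $\Gamma_\Calf$ shares generators with $\Gamma_C$.  Strip the left and right tails $\mathfrak{lt}(\Gamma_\Calf)$, $\mathfrak{rt}(\Gamma_\Calf)$ to obtain the \emph{core}, the connected substair running from the first generator $w_1$ to the last generator $w_r$ along a zigzag of $r-1$ L-shaped pieces.  By Proposition~\ref{propsimple}(3) and Remark~\ref{stessigen}, this core is independent of the choice of $\Calf$.  Combinatorially it is encoded by the integer $r\ge 1$, the arm and stem lengths $a_j, b_{j+1}\ge 2$ for $j=1,\ldots,r-1$ (so that the zigzag from $w_j$ to $w_{j+1}$ consists of $a_j-1$ right-steps followed by $b_{j+1}-1$ down-steps), together with $\sigma_1\in\Irr(G)\cong\Z/k\Z$ labelling~$w_1$.

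Next I would show the map $C\mapsto(r,(a_j,b_{j+1})_{j=1}^{r-1},\sigma_1)$ is a bijection onto admissible tuples.  For injectivity: two chambers $C,C'$ with the same invariant have $C$-stair and $C'$-stair with the same generators as decorated boxes in $\Calt_G$, so any $G$-constellation built from the common core by choosing a tail splitting is simultaneously $C$- and $C'$-characteristic by Proposition~\ref{propsimple}(3); then Proposition~\ref{propsimple}(1) forces $C=C'$.  For surjectivity: an admissible tuple together with any splitting $b_1+a_r=k-(\text{core size})+2$ (with $b_1,a_r\ge 1$) assembles into a concrete abstract $G$-stair; feeding the corresponding $\Calf$ into the algorithm of Proposition~\ref{propsimple} produces a simple chamber whose invariant is the prescribed tuple.

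The count is then a clean stars-and-bars computation.  The core contains $1+\sum_{j=1}^{r-1}\!\bigl((a_j-1)+(b_{j+1}-1)\bigr)$ boxes, and this must be at most $k$; the substitution $a_j'=a_j-2,\ b_{j+1}'=b_{j+1}-2\ge 0$ turns the constraint into
\[
a_1'+b_2'+a_2'+b_3'+\cdots+a_{r-1}'+b_r' \;\le\; k-2r+1
\]
in $2(r-1)$ non-negative unknowns, yielding $\binom{k-1}{2r-2}$ admissible cores with $r$ generators (the bound $r\le\lfloor(k+1)/2\rfloor$ from Lemma~\ref{genconst} is automatic, since $\binom{k-1}{2r-2}=0$ for larger~$r$).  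Applying the classical identity $\sum_{s\ge 0}\binom{n}{2s}=2^{n-1}$ (for $n\ge 1$) with $n=k-1$ gives
\[
\sum_{r\ge 1}\binom{k-1}{2r-2}\;=\;\sum_{s\ge 0}\binom{k-1}{2s}\;=\;2^{k-2},
\]
and multiplying by the $k$ choices for $\sigma_1$ produces the announced total $k\cdot 2^{k-2}$.

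The step I expect to be the main obstacle is injectivity, where one must really interpret ``the same generators'' in Proposition~\ref{propsimple}(3) and Remark~\ref{stessigen} as a statement about decorated boxes in the representation tableau with their relative positions remembered, not merely as a statement about the underlying set of representations: two cores with the same $(r,\sigma_1)$ but different arm/stem profiles $(a_j,b_{j+1})$ carry genuinely different generator configurations, and it is precisely this finer invariant that distinguishes the $k\cdot 2^{k-2}$ simple chambers.
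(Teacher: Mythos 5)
Your proposal is correct and takes essentially the same route as the paper: both reduce the count to classifying the generator configurations of chamber stairs up to translation (your ``core'' carries exactly the data of the paper's class in $\overline{\Calb}$, with uniqueness of the chamber coming from \Cref{propsimple}), and then apply stars and bars together with $\sum_{s\ge 0}\binom{k-1}{2s}=2^{k-2}$. The only difference is organizational: the paper fixes the tail cardinality $j$ and sums $\binom{k-2-j}{2r-3}$ over $j$, which collapses (by the hockey-stick identity) to your single binomial $\binom{k-1}{2r-2}$ obtained from the inequality constraint.
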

\begin{proof}Let $\Calb$ be the set of of possible sets of generators for a $G$-stair, i.e.
	\[\Calb=\Set{A\subset \Calt_G|\mbox{there exists a $G$-stair whose generators are the elements in $A$}},\]
	and let $\Calg$ be the set of all $G$-stairs
	\[\Calg=\Set{\Gamma\subset \Calt_G|\mbox{$\Gamma$ is a $G$-stair}}.\]
	Consider the subsemigroup $Z$ of $\Calt_G$
	\[Z=\Set{(\alpha k +\gamma , \beta k+\gamma,\rho_0)\in\Calt_G|\alpha,\beta,\gamma\ge 0  }.\]
	We denote by $\overline{\Calb}$ and $\overline{\Calg}$ the set of equivalence classes
	$$\overline{\Calb}=\Calb/\sim_Z,\ \mbox{and} \  \overline{\Calg}=\Calg/\sim_Z$$
	where, if $A_1,A_2\in \Calb$ (resp. $\Gamma_1,\Gamma_2\in\Calg$), then $A_1\sim_Z A_2$ (resp. $\Gamma_1\sim_Z \Gamma_2$) if there exist $z\in Z$ such that
	$$A_1=A_2+z\mbox{ or }A_2= A_1+z \quad (\mbox{resp. }\Gamma_1=\Gamma_2+z\mbox{ or }\Gamma_2= \Gamma_1+z).$$
	Notice that, if two $G$-stairs are $\sim_Z$-equivalent also their sets of generators are $\sim_Z$-equivalent. However, the contrary is not true.
	
	 Now, the number of simple chambers equals the cardinality of $\overline{\Calb}$. Indeed, \Cref{propsimple} implies that the chamber $C$ is uniquely determined by a constellation $\Calf$ whose $G$-stair is $C$-characteristic. More precisely, $C$ is uniquely determined by the generators of any characteristic $C$-stair $\Gamma_\Calf$. Although there are infinitely many $G$-stairs corresponding to $\Calf$, \Cref{periodn} tells us that two $G$-stairs correspond to the same $G$-constellation if and only if they differ by an element in $Z$, i.e. they are $\sim_Z$-equivalent.
	 
	 Let $\Calg_r$ be the set of $G$-stairs with $r$ generators and let $\overline{\Calg}_r=\Calg_r/\sim_Z$ be the induced quotient. We have a surjective map
	 $$\Psi:\Calg\rightarrow\Calb$$
	 which associates to each $G$-stair its set of generators, and this map descends to the sets of equivalence classes
	 $$\overline{\Psi}:\overline{\Calg}\rightarrow\overline{\Calb},$$
	 because $\sim_Z$-equivalent $G$-stairs correspond to $\sim _Z$-equivalent sets of generators.
	 
	 Now, $\overline{\Calb}$ decomposes as a disjoint union (see \Cref{genconst}) as follows:
	 $$\overline{\Calb}=\underset{r=1}{\overset{\left\lfloor \frac{k+1}{2}\right\rfloor}{\bigsqcup}}\overline{\Psi}(\overline{\Calg}_r).$$
	 
	 Our strategy is to compute $\overline{\Psi}(\overline{\Calg}_r)$ for every $1\le r \le\left\lfloor \frac{k+1}{2}\right\rfloor$ and then sum over all $r$.
	 For $r=1$ we have $|\overline{\Psi}(\overline{\Calg}_1)|=k$. If we impose the presence of $r\ge2$ generators and of a tail of cardinality $j$ then there are
	 $$k\cdot\binom{k-2-j}{2r-3}$$
	 elements in $\overline{\Psi}(\overline{\Calg}_r)$ which comes from $G$-stairs with a tail of cardinality $j$. Indeed, as shown in \Cref{arrangement}, we have $2r-1$ fixed boxes (generators and anti-generators), $j$ boxes contained in the tails (dashed areas) and $k-2r+1-j$ boxes left to arrange in $2r-2$ places (dotted areas).	 
	 \begin{figure}\scalebox{0.8}{
	 		\begin{tikzpicture}[scale=0.6]
				\draw (0,0)--(0,1)--(1,1)--(1,0)--(0,0)--(0,1);
				\draw (2,0)--(2,1)--(3,1)--(3,0)--(2,0)--(2,1);
				\draw (2,-2)--(2,-1)--(3,-1)--(3,-2)--(2,-2)--(2,-1);
				\draw (4,-2)--(4,-1)--(5,-1)--(5,-2)--(4,-2)--(4,-1);
				\draw (5,-3)--(5,-2)--(6,-2)--(6,-3)--(5,-3)--(5,-2);
				\draw (5,-5)--(5,-4)--(6,-4)--(6,-5)--(5,-5)--(5,-4);
	 			\node at (1.5,0.5) {$\cdots$};
	 			\node at (2.5,-0.35) {$\vdots$};
	 			\node at (3.5,-1.5) {$\cdots$};
	 			\node at (5.5,-1.5) {$\ddots$};
	 			\node at (5.5,-3.35) {$\vdots$};
	 			\node at (2.2,-3) {\rotatebox{-45}{$\underbrace{\hspace{4.7cm}}$}};
	 			\node at (4.8,-0.2) {\rotatebox{-45}{$\overbrace{\hspace{3cm}}$}};
	 			\node[left] at (2.2,-3.5) {$r$};
	 			\node[right] at (4.8,0.2) {$r-1$};
	 			\draw[dashed] (0,1)--(0,2);
	 			\draw[dashed] (1,1)--(1,2);
	 			\draw[dashed] (6,-5)--(7,-5);
	 			\draw[dashed] (6,-4)--(7,-4);
	 	\end{tikzpicture}}
	 	\caption{\ }
	 	\label{arrangement}
	 \end{figure}
 The number of possible ways to arrange the boxes is computed via the stars and bars method\footnote{In a more suggestive way, one can say \virg combinations with repetition of $2r-2$ elements of class $k-2r+1-j$".}. In particular, there are
 $$\binom{(2r-2)+(k-2r+1-j)-1}{k-2r+1-j}=\binom{k-2-j}{2r-3}$$
 of them.
 
 Finally, if we sum over all possible $r$ and $j$, we get
 $$k\cdot \left[1+\ssum{r=2}{\left\lfloor\frac{k+1}{2}\right\rfloor}\ssum{j=0}{k-2r+1}\binom{k-2-j}{2r-3}\right]=k\cdot {2^{k-2}}.$$
\end{proof}

\begin{remark}
An easy combinatorial computation shows that the set $\overline{\Calg}$ in the proof of \Cref{teosimple} has cardinality $k\cdot 2^{k-1}$, i.e. that there there are exactly $k\cdot 2^{k-1}$ isomorphisms classes of toric $G$-constellations.
\end{remark}

We conclude this section with two examples which help to understand the notions just introduced.
\begin{example} In this example we treat the case $G\cong \Z/5\Z$.
	
	 The following picture contains a list of the possible shapes of the abstract chamber stairs of simple chambers and, in each case, the shapes of the $G$-stairs associated to the toric $G$-constellations belonging to the respective simple chamber.
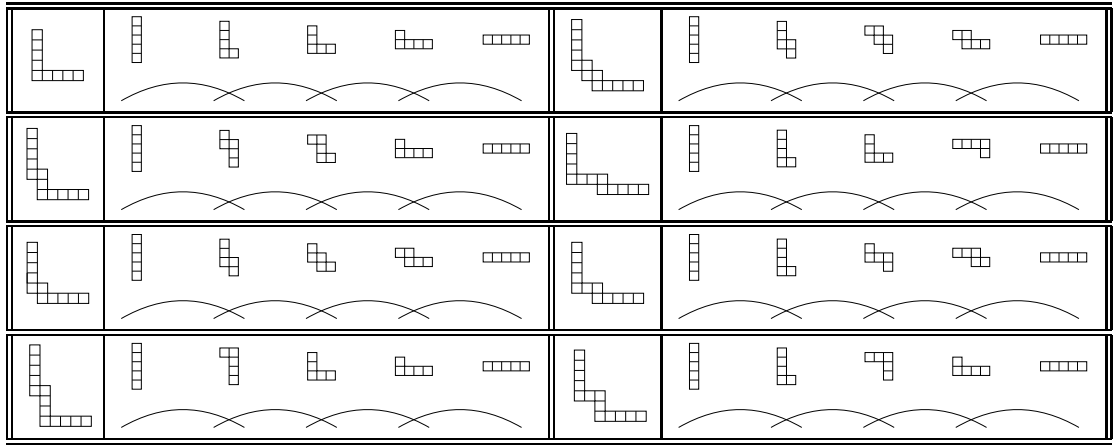
\begin{figure}[H]
	\scalebox{0.9}{
	\begin{tabular}{||c|c||c|c||}
		\hline\hline
		$\begin{matrix}
			{\begin{tikzpicture}[scale=0.15]
			\draw (0,1)--(0,0)--(5,0)--(5,1)--(1,1)--(1,5)--(0,5)--(0,1);
			\draw (0,1)--(1,1)--(1,0);
			\draw (2,0)--(2,1);
			\draw (3,0)--(3,1);
			\draw (4,0)--(4,1);
			\draw (0,4)--(1,4);
			\draw (0,3)--(1,3);
			\draw (0,2)--(1,2);
			\end{tikzpicture}}
		\end{matrix}$&
		$\begin{matrix}\scalebox{0.9}{
			\begin{tikzpicture}
				\draw (0,0) to [out=30,in=150] (2,0);
				\draw (1.5,0) to [out=30,in=150] (3.5,0);
				\draw (3,0) to [out=30,in=150] (5,0);
				\draw (4.5,0) to [out=30,in=150] (6.5,0);
				\node at (0.25,1) {\begin{tikzpicture}[scale=0.15]
			\draw (0,1)--(0,0)--(1,0)--(1,5) --(0,5)--(0,1);
			\draw (0,1)--(1,1);
			\draw (0,4)--(1,4);
			\draw (0,3)--(1,3);
			\draw (0,2)--(1,2);
					\end{tikzpicture}
					
				};
				\node at (1.75,1) {\begin{tikzpicture}[scale=0.15]
			\draw (0,1)--(0,0)--(2,0)--(2,1)--(1,1)--(1,4) --(0,4)--(0,1);
			\draw (0,1)--(1,1)--(1,0);
			\draw (0,3)--(1,3);
			\draw (0,2)--(1,2);
					\end{tikzpicture}
					
				};
				\node at (3.25,1) {\begin{tikzpicture}[scale=0.15]
			\draw (0,1)--(0,0)--(3,0)--(3,1)--(1,1)--(1,3) --(0,3)--(0,1);
			\draw (0,1)--(1,1)--(1,0);
			\draw (0,2)--(1,2);
			\draw (2,0)--(2,1);
					\end{tikzpicture}
					
				};
				\node at (4.75,1) {\begin{tikzpicture}[scale=0.15]
			\draw (0,1)--(0,0)--(4,0)--(4,1)--(1,1)--(1,2) --(0,2)--(0,1);
			\draw (0,1)--(1,1)--(1,0);
			\draw (2,0)--(2,1);
			\draw (3,0)--(3,1);
					\end{tikzpicture}
					
				};
				\node at (6.25,1) {\begin{tikzpicture}[scale=0.15]
			\draw (1,0)--(0,0)--(0,1)--(5,1) --(5,0)--(1,0);
			\draw (1,0)--(1,1);
			\draw (4,0)--(4,1);
			\draw (3,0)--(3,1);
			\draw (2,0)--(2,1);
					\end{tikzpicture}
					
				};
		\end{tikzpicture}}
	\end{matrix}$&
$\begin{matrix}
\scalebox{1}{\begin{tikzpicture}[scale=0.15]
			\draw (0,1)--(0,0)--(1,0)--(1,-1)--(6,-1)--(6,0)--(2,0)--(2,1)--(1,1)--(1,2)--(0,2)--(0,6) --(-1,6)--(-1,1)--(0,1);
			\draw (0,2)--(0,1)--(1,1)--(1,0)--(2,0);
			\draw (0,5)--(-1,5);
			\draw (0,4)--(-1,4);
			\draw (0,3)--(-1,3);
			\draw (0,2)--(-1,2);
			\draw (2,0)--(2,-1);
			\draw (3,0)--(3,-1);
			\draw (4,0)--(4,-1);
			\draw (5,0)--(5,-1);
\end{tikzpicture}}
\end{matrix}$&
$\begin{matrix}\scalebox{0.9}{
	\begin{tikzpicture}
		\draw (0,0) to [out=30,in=150] (2,0);
		\draw (1.5,0) to [out=30,in=150] (3.5,0);
		\draw (3,0) to [out=30,in=150] (5,0);
		\draw (4.5,0) to [out=30,in=150] (6.5,0);
		\node at (0.25,1) {\begin{tikzpicture}[scale=0.15]
			\draw (0,1)--(0,0)--(1,0)--(1,5) --(0,5)--(0,1);
			\draw (0,1)--(1,1);
			\draw (0,4)--(1,4);
			\draw (0,3)--(1,3);
			\draw (0,2)--(1,2);
			\end{tikzpicture}
			
		};
		\node at (4.75,1) {\begin{tikzpicture}[scale=0.15]
		\draw (2,0)--(1,0)--(1,1)--(0,1)--(0,2)--(2,2)--(2,1)--(4,1)--(4,0)--(2,0);
		\draw (1,2)--(1,1)--(2,1)--(2,0);
		\draw (3,1)--(3,0);
			\end{tikzpicture}
			
		};
		\node at (3.25,1) {\begin{tikzpicture}[scale=0.15]
			\draw (0,1)--(0,0)--(1,0)--(1,-1)--(2,-1)--(2,1)--(1,1)--(1,2) --(-1,2)--(-1,1)--(0,1);
			\draw (0,2)--(0,1)--(1,1)--(1,0)--(2,0);
			\end{tikzpicture}
			
		};
		\node at (1.75,1) {\begin{tikzpicture}[scale=0.15]
			\draw (0,1)--(0,0)--(1,0)--(1,-1)--(2,-1)--(2,1)--(1,1)--(1,3) --(0,3)--(0,1);
			\draw (0,1)--(1,1)--(1,0)--(2,0);
			\draw (0,2)--(1,2);
			\end{tikzpicture}
			
		};
		\node at (6.25,1) {\begin{tikzpicture}[scale=0.15]
			\draw (4,0)--(0,0)--(0,1)--(5,1) --(5,0)--(4,0);
			\draw (1,0)--(1,1);
			\draw (4,0)--(4,1);
			\draw (3,0)--(3,1);
			\draw (2,0)--(2,1);
			\end{tikzpicture}
			
		};
\end{tikzpicture}}
\end{matrix}$\\\hline\hline
$\begin{matrix}
\scalebox{1}{\begin{tikzpicture}[scale=0.15]
			\draw (0,1)--(0,0)--(0,-1)--(5,-1)--(5,0)--(1,0)-- (1,2)--(0,2)--(0,6) --(-1,6)--(-1,1)--(0,1);
			\draw (0,2)--(0,1)--(1,1)--(1,0)--(2,0);
			\draw (0,0)--(1,0)--(1,-1);
			\draw (0,5)--(-1,5);
			\draw (0,4)--(-1,4);
			\draw (0,3)--(-1,3);
			\draw (0,2)--(-1,2);
			\draw (2,0)--(2,-1);
			\draw (3,0)--(3,-1);
			\draw (4,0)--(4,-1);
\end{tikzpicture}}
\end{matrix}$&
		$\begin{matrix}\scalebox{0.9}{
			\begin{tikzpicture}
				\draw (0,0) to [out=30,in=150] (2,0);
				\draw (1.5,0) to [out=30,in=150] (3.5,0);
				\draw (3,0) to [out=30,in=150] (5,0);
				\draw (4.5,0) to [out=30,in=150] (6.5,0);
				\node at (0.25,1) {\begin{tikzpicture}[scale=0.15]
			\draw (0,4)--(0,0)--(1,0)--(1,5) --(0,5)--(0,4);
			\draw (0,1)--(1,1);
			\draw (0,4)--(1,4);
			\draw (0,3)--(1,3);
			\draw (0,2)--(1,2);
					\end{tikzpicture}
					
				};
				\node at (4.75,1) {\begin{tikzpicture}[scale=0.15]
			\draw (0,1)--(0,0)--(4,0)--(4,1)--(1,1)--(1,2) --(0,2)--(0,1);
			\draw (0,1)--(1,1)--(1,0);
			\draw (2,0)--(2,1);
			\draw (3,0)--(3,1);
					\end{tikzpicture}
					
				};
				\node at (3.25,1) {\begin{tikzpicture}[scale=0.15]
			\draw (1,1)--(0,1)--(0,0)--(1,0)--(1,-2)--(3,-2)--(3,-1)--(2,-1)--(2,1)--(1,1);
			\draw (1,1)--(1,0)--(2,0);
			\draw (1,-1)--(2,-1)--(2,-2);
					\end{tikzpicture}
					
				};
				\node at (1.75,1) {\begin{tikzpicture}[scale=0.15]
			\draw (0,1)--(0,0)--(1,0)--(1,-2)--(2,-2)--(2,1)--(1,1)--(1,2) --(0,2)--(0,1);
			\draw (0,1)--(1,1)--(1,0)--(2,0);
			\draw (1,-1)--(2,-1);
					\end{tikzpicture}
					
				};
				\node at (6.25,1) {\begin{tikzpicture}[scale=0.15]
			\draw (4,0)--(0,0)--(0,1)--(5,1) --(5,0)--(4,0);
			\draw (1,0)--(1,1);
			\draw (4,0)--(4,1);
			\draw (3,0)--(3,1);
			\draw (2,0)--(2,1);
					\end{tikzpicture}
					
				};
		\end{tikzpicture}}
	\end{matrix}$&
$\begin{matrix}
\scalebox{1}{\begin{tikzpicture}[scale=0.15]
\draw (0,1)--(0,5)--(1,5)--(1,1)--(4,1)--(4,0)--(8,0)--(8,-1)--(3,-1)--(3,0)--(0,0)--(0,1);
\draw (0,1)--(1,1)--(1,0);
\draw (0,2)--(1,2);
\draw (0,3)--(1,3);
\draw (0,4)--(1,4);
\draw (2,1)--(2,0);
\draw (3,1)--(3,0)--(4,0)--(4,-1);
\draw  (5,0)--(5,-1);
\draw  (6,0)--(6,-1);
\draw  (7,0)--(7,-1);
\end{tikzpicture}}
\end{matrix}$&
		$\begin{matrix}\scalebox{0.9}{
			\begin{tikzpicture}
				\draw (0,0) to [out=30,in=150] (2,0);
				\draw (1.5,0) to [out=30,in=150] (3.5,0);
				\draw (3,0) to [out=30,in=150] (5,0);
				\draw (4.5,0) to [out=30,in=150] (6.5,0);
				\node at (0.25,1) {\begin{tikzpicture}[scale=0.15]
			\draw (0,4)--(0,0)--(1,0)--(1,5) --(0,5)--(0,4);
			\draw (0,1)--(1,1);
			\draw (0,4)--(1,4);
			\draw (0,3)--(1,3);
			\draw (0,2)--(1,2);
					\end{tikzpicture}
					
				};
				\node at (4.75,1) {\begin{tikzpicture}[scale=0.15]
				\draw (1,2)--(4,2)--(4,0)--(3,0)--(3,1)--(0,1)--(0,2)--(1,2);
				\draw (4,1)--(3,1)--(3,2);
				\draw (2,1)--(2,2);
				\draw (1,1)--(1,2);
					\end{tikzpicture}
					
				};
				\node at (3.25,1) {\begin{tikzpicture}[scale=0.15]
			\draw (0,1)--(0,0)--(3,0)--(3,1)--(1,1)--(1,3) --(0,3)--(0,1);
			\draw (0,1)--(1,1)--(1,0);
			\draw (0,2)--(1,2);
			\draw (2,0)--(2,1);
					\end{tikzpicture}
					
				};
				\node at (1.75,1) {\begin{tikzpicture}[scale=0.15]
			\draw (0,3)--(0,0)--(2,0)--(2,1)--(1,1)--(1,4) --(0,4)--(0,3);
			\draw (0,1)--(1,1)--(1,0);
			\draw (0,3)--(1,3);
			\draw (0,2)--(1,2);
					\end{tikzpicture}
					
				};
				\node at (6.25,1) {\begin{tikzpicture}[scale=0.15]
			\draw (4,0)--(0,0)--(0,1)--(5,1) --(5,0)--(4,0);
			\draw (1,0)--(1,1);
			\draw (4,0)--(4,1);
			\draw (3,0)--(3,1);
			\draw (2,0)--(2,1);
					\end{tikzpicture}
					
				};
		\end{tikzpicture}}
	\end{matrix}$\\\hline\hline
$\begin{matrix}
{\begin{tikzpicture}[scale=0.15]
\draw (0,1)--(0,5)--(1,5)--(1,1)--(2,1)--(2,0)--(6,0)--(6,-1)--(1,-1)--(1,0)--(0,0)--(0,1);
			\draw (0,2)--(0,1)--(1,1)--(1,0)--(2,0);
			\draw (0,4)--(1,4);
			\draw (0,3)--(1,3);
			\draw (0,2)--(1,2);
			\draw (2,0)--(2,-1);
			\draw (3,0)--(3,-1);
			\draw (4,0)--(4,-1);
			\draw (5,0)--(5,-1);
\end{tikzpicture}}
\end{matrix}$&		
$\begin{matrix}\scalebox{0.9}{
	\begin{tikzpicture}
		\draw (0,0) to [out=30,in=150] (2,0);
		\draw (1.5,0) to [out=30,in=150] (3.5,0);
		\draw (3,0) to [out=30,in=150] (5,0);
		\draw (4.5,0) to [out=30,in=150] (6.5,0);
		\node at (0.25,1) {\begin{tikzpicture}[scale=0.15]
			\draw (0,4)--(0,0)--(1,0)--(1,5) --(0,5)--(0,4);
			\draw (0,1)--(1,1);
			\draw (0,4)--(1,4);
			\draw (0,3)--(1,3);
			\draw (0,2)--(1,2);
			\end{tikzpicture}
			
		};
		\node at (4.75,1) {\begin{tikzpicture}[scale=0.15]
		\draw (2,0)--(1,0)--(1,1)--(0,1)--(0,2)--(2,2)--(2,1)--(4,1)--(4,0)--(2,0);
		\draw (1,2)--(1,1)--(2,1)--(2,0);
		\draw (3,1)--(3,0);
			\end{tikzpicture}
			
		};
		\node at (3.25,1) {\begin{tikzpicture}[scale=0.15]
			\draw (0,1)--(0,0)--(1,0)--(1,-1)--(3,-1)--(3,0)--(2,0)--(2,1)--(1,1)--(1,2) --(0,2)--(0,1);
			\draw (0,1)--(1,1)--(1,0)--(2,0)--(2,-1);
			\end{tikzpicture}
			
		};
		\node at (1.75,1) {\begin{tikzpicture}[scale=0.15]
			\draw (0,2)--(0,0)--(1,0)--(1,-1)--(2,-1)--(2,1)--(1,1)--(1,3) --(0,3)--(0,2);
			\draw (0,1)--(1,1)--(1,0)--(2,0);
			\draw (0,2)--(1,2);
			\end{tikzpicture}
			
		};
		\node at (6.25,1) {\begin{tikzpicture}[scale=0.15]
			\draw (4,0)--(0,0)--(0,1)--(5,1) --(5,0)--(4,0);
			\draw (1,0)--(1,1);
			\draw (4,0)--(4,1);
			\draw (3,0)--(3,1);
			\draw (2,0)--(2,1);
			\end{tikzpicture}
			
		};
\end{tikzpicture}}
\end{matrix}$&
$\begin{matrix}
\scalebox{1}{\begin{tikzpicture}[scale=0.15]
\draw (0,1)--(0,5)--(1,5)--(1,1)--(3,1)--(3,0)--(7,0)--(7,-1)--(2,-1)--(2,0)--(0,0)--(0,1);
\draw (0,2)--(1,2);
\draw (0,3)--(1,3);
\draw (0,4)--(1,4);
\draw (0,1)--(1,1)--(1,0);
\draw (2,1)--(2,0)--(3,0)--(3,-1);
\draw (4,0)--(4,-1);
\draw (5,0)--(5,-1);
\draw (6,0)--(6,-1);
\end{tikzpicture}}
\end{matrix}$&
$\begin{matrix}\scalebox{0.9}{
			\begin{tikzpicture}
				\draw (0,0) to [out=30,in=150] (2,0);
				\draw (1.5,0) to [out=30,in=150] (3.5,0);
				\draw (3,0) to [out=30,in=150] (5,0);
				\draw (4.5,0) to [out=30,in=150] (6.5,0);
				\node at (0.25,1) {\begin{tikzpicture}[scale=0.15]
			\draw (0,4)--(0,0)--(1,0)--(1,5) --(0,5)--(0,4);
			\draw (0,1)--(1,1);
			\draw (0,4)--(1,4);
			\draw (0,3)--(1,3);
			\draw (0,2)--(1,2);
					\end{tikzpicture}
					
				};[scale=0.15]
				\node at (4.75,1) {\begin{tikzpicture}[scale=0.15]
		\draw (3,0)--(2,0)--(2,1)--(0,1)--(0,2)--(3,2)--(3,1)--(4,1)--(4,0)--(3,0);
		\draw (2,2)--(2,1)--(3,1)--(3,0);
		\draw (1,1)--(1,2);
					\end{tikzpicture}
					
				};
				\node at (3.25,1) {\begin{tikzpicture}[scale=0.15]
			\draw (0,1)-- (0,0)--(2,0)--(2,-1)--(3,-1)--(3,1)--(1,1)--(1,2) --(0,2)--(0,1);
			\draw (0,1)--(1,1)--(1,0);
			\draw (2,1)--(2,0)--(3,0);
					\end{tikzpicture}
					
				};
				\node at (1.75,1) {\begin{tikzpicture}[scale=0.15]
			\draw (0,3)-- (0,0)--(2,0)--(2,1)--(1,1)--(1,4) --(0,4)--(0,3);
			\draw (0,1)--(1,1)--(1,0);
			\draw (0,3)--(1,3);
			\draw (0,2)--(1,2);
					\end{tikzpicture}
					
				};
				\node at (6.25,1) {\begin{tikzpicture}[scale=0.15]
			\draw (4,0)--(0,0)--(0,1)--(5,1) --(5,0)--(4,0);
			\draw (1,0)--(1,1);
			\draw (4,0)--(4,1);
			\draw (3,0)--(3,1);
			\draw (2,0)--(2,1);
					\end{tikzpicture}
					
				};
		\end{tikzpicture}}
	\end{matrix}$\\\hline\hline
$\begin{matrix}
\scalebox{1}{\begin{tikzpicture}[scale=0.15]
\draw (0,1)--(0,5)--(1,5)--(1,1)--(2,1)--(2,-2)--(6,-2)--(6,-3)--(1,-3)--(1,0)--(0,0)--(0,1);
\node at (0.5,5) {$\ $}; 
\draw (0,4) -- (1,4); 
\draw (0,3) -- (1,3); 
\draw (0,2) -- (1,2); 
\draw (0,1) -- (1,1)-- (1,0)-- (2,0);
\draw  (1,-1)-- (2,-1);
\draw  (1,-2)-- (2,-2)-- (2,-3);
\draw  (3,-2)-- (3,-3);
\draw  (4,-2)-- (4,-3);
\draw  (5,-2)-- (5,-3);
\end{tikzpicture}}
\end{matrix}$&
$\begin{matrix}\scalebox{0.9}{
			\begin{tikzpicture}
				\draw (0,0) to [out=30,in=150] (2,0);
				\draw (1.5,0) to [out=30,in=150] (3.5,0);
				\draw (3,0) to [out=30,in=150] (5,0);
				\draw (4.5,0) to [out=30,in=150] (6.5,0);
				\node at (0.25,1) {\begin{tikzpicture}[scale=0.15]
			\draw (0,4)--(0,0)--(1,0)--(1,5) --(0,5)--(0,4);
			\draw (0,1)--(1,1);
			\draw (0,4)--(1,4);
			\draw (0,3)--(1,3);
			\draw (0,2)--(1,2);
					\end{tikzpicture}
					
				};
				\node at (4.75,1) {\begin{tikzpicture}[scale=0.15]
			\draw (0,1)--(0,0)--(4,0)--(4,1)--(1,1)--(1,2) --(0,2)--(0,1);
			\draw (0,1)--(1,1)--(1,0);
			\draw (2,0)--(2,1);
			\draw (3,0)--(3,1);
					\end{tikzpicture}
					
				};
				\node at (3.25,1) {\begin{tikzpicture}[scale=0.15]
			\draw (0,1)--(0,0)--(3,0)--(3,1)--(1,1)--(1,3) --(0,3)--(0,1);
			\draw (0,1)--(1,1)--(1,0);
			\draw (0,2)--(1,2);
			\draw (2,0)--(2,1);
					\end{tikzpicture}
					
				};
				\node at (1.75,1) {\begin{tikzpicture}[scale=0.15]
				\draw (2,1)--(2,4)--(0,4)--(0,3)--(1,3)--(1,0)--(2,0)--(2,1);
				\draw (1,1)--(2,1);
				\draw (1,2)--(2,2);
				\draw (1,4)--(1,3)--(2,3);
					\end{tikzpicture}
					
				};
				\node at (6.25,1) {\begin{tikzpicture}[scale=0.15]
			\draw (4,0)--(0,0)--(0,1)--(5,1) --(5,0)--(4,0);
			\draw (1,0)--(1,1);
			\draw (4,0)--(4,1);
			\draw (3,0)--(3,1);
			\draw (2,0)--(2,1);
					\end{tikzpicture}
					
				};
		\end{tikzpicture}}
	\end{matrix}$&
$\begin{matrix}
	\scalebox{1}{\begin{tikzpicture}[scale=0.15]
	\draw (0,1)--(0,5)--(1,5)--(1,1)--(3,1)--(3,-1)--(7,-1)--(7,-2)--(2,-2)--(2,0)--(0,0)--(0,1);
\node at (0.5,5) {$\ $}; 
\draw (0,4) -- (1,4); 
\draw (0,3) -- (1,3); 
\draw (0,2) -- (1,2); 
\draw (0,1) -- (1,1)-- (1,0); 
\draw (2,1) -- (2,0)-- (3,0); 
\draw (2,-1) -- (3,-1)-- (3,-2); 
\draw  (4,-1)-- (4,-2);  
\draw  (5,-1)-- (5,-2);  
\draw  (6,-1)-- (6,-2); 
	\end{tikzpicture}}
\end{matrix}$&
$\begin{matrix}\scalebox{0.9}{
			\begin{tikzpicture}
				\draw (0,0) to [out=30,in=150] (2,0);
				\draw (1.5,0) to [out=30,in=150] (3.5,0);
				\draw (3,0) to [out=30,in=150] (5,0);
				\draw (4.5,0) to [out=30,in=150] (6.5,0);
				\node at (0.25,1) {\begin{tikzpicture}[scale=0.15]
			\draw (0,4)--(0,0)--(1,0)--(1,5) --(0,5)--(0,4);
			\draw (0,1)--(1,1);
			\draw (0,4)--(1,4);
			\draw (0,3)--(1,3);
			\draw (0,2)--(1,2);
					\end{tikzpicture}
					
				};
				\node at (4.75,1) {\begin{tikzpicture}[scale=0.15]
			\draw (0,1)--(0,0)--(4,0)--(4,1)--(1,1)--(1,2) --(0,2)--(0,1);
			\draw (0,1)--(1,1)--(1,0);
			\draw (2,0)--(2,1);
			\draw (3,0)--(3,1);
					\end{tikzpicture}
					
				};
				\node at (3.25,1) {\begin{tikzpicture}[scale=0.15]
				\draw (1,3)--(3,3)--(3,0)--(2,0)--(2,2)--(0,2)--(0,3)--(1,3);
				\draw (1,3)--(1,2);
				\draw (2,3)--(2,2)--(3,2);
				\draw (3,1)--(2,1);
					\end{tikzpicture}
					
				};
				\node at (1.75,1) {\begin{tikzpicture}[scale=0.15]
			\draw (0,3)--(0,0)--(2,0)--(2,1)--(1,1)--(1,4) --(0,4)--(0,3);
			\draw (0,1)--(1,1)--(1,0);
			\draw (0,3)--(1,3);
			\draw (0,2)--(1,2);
					\end{tikzpicture}
					
				};
				\node at (6.25,1) {\begin{tikzpicture}[scale=0.15]
			\draw (4,0)--(0,0)--(0,1)--(5,1) --(5,0)--(4,0);
			\draw (1,0)--(1,1);
			\draw (4,0)--(4,1);
			\draw (3,0)--(3,1);
			\draw (2,0)--(2,1);
					\end{tikzpicture}
					
				};
		\end{tikzpicture}}
	\end{matrix}$\\\hline\hline
	\end{tabular}}
	\caption{Description of the simple chambers for the action of $\Z/5\Z$.}
\end{figure}

As predicted by \Cref{teosimple}, the possible shapes for the chamber stairs of simple chambers are $ 8 = 2 ^ {5-2} $, and there are 5 different ways to label each chamber stair.
\end{example}
\begin{example}In this example we treat the case $G\cong \Z/4\Z$.
	
	The following picture contains a list of the possible shapes of the abstract chamber stairs and, in each case, the shapes of the $G$-stairs associated to the toric $G$-constellations belonging to the respective chamber.
	\begin{figure}[H]
		\begin{tabular}{||c|c||c|c||}
			\hline\hline
			$\begin{matrix}
				\scalebox{1}{\begin{tikzpicture}[scale=0.15]
			\draw (0,1)--(0,0)--(4,0)--(4,1)--(1,1)--(1,4)--(0,4)--(0,1);
			\draw (0,1)--(1,1)--(1,0);
			\draw (2,0)--(2,1);
			\draw (3,0)--(3,1);
			\draw (0,3)--(1,3);
			\draw (0,2)--(1,2);
				\end{tikzpicture}}
			\end{matrix}$&
			$\begin{matrix}\scalebox{0.9}{
					\begin{tikzpicture}
						\draw (0,0) to [out=30,in=150] (2,0);
						\draw (1.5,0) to [out=30,in=150] (3.5,0);
						\draw (3,0) to [out=30,in=150] (5,0);
						\node at (0.25,1) {\begin{tikzpicture}[scale=0.15]
			\draw (0,3)--(0,0)--(1,0)--(1,4) --(0,4)--(0,3);
			\draw (0,1)--(1,1);
			\draw (0,3)--(1,3);
			\draw (0,2)--(1,2);
							\end{tikzpicture}
							
						};
						\node at (1.75,1) {\begin{tikzpicture}[scale=0.15]
			\draw (0,2)--(0,0)--(2,0)--(2,1)--(1,1)--(1,3) --(0,3)--(0,2);
			\draw (0,1)--(1,1)--(1,0);
			\draw (0,2)--(1,2);
							\end{tikzpicture}
							
						};
						\node at (3.25,1) {\begin{tikzpicture}[scale=0.15]
			\draw (0,1)--(0,0)--(3,0)--(3,1)--(1,1)--(1,2) --(0,2)--(0,1);
			\draw (0,1)--(1,1)--(1,0);
			\draw (2,0)--(2,1);
							\end{tikzpicture}
							
						};
						\node at (4.75,1) {\begin{tikzpicture}[scale=0.15]
			\draw (3,0)--(0,0)--(0,1)--(4,1) --(4,0)--(3,0);
			\draw (1,0)--(1,1);
			\draw (3,0)--(3,1);
			\draw (2,0)--(2,1);
							\end{tikzpicture}
							
						};
				\end{tikzpicture}}
			\end{matrix}$&
		$\begin{matrix}
		\scalebox{1}{\begin{tikzpicture}[scale=0.15]
			\draw (0,1)--(0,0)--(0,-1)--(4,-1)--(4,0)--(1,0)-- (1,2)--(0,2)--(0,5) --(-1,5)--(-1,1)--(0,1);
			\draw (0,2)--(0,1)--(1,1)--(1,0)--(2,0);
			\draw (0,0)--(1,0)--(1,-1);
			\draw (0,4)--(-1,4);
			\draw (0,3)--(-1,3);
			\draw (0,2)--(-1,2);
			\draw (2,0)--(2,-1);
			\draw (3,0)--(3,-1);
		\end{tikzpicture}}
	\end{matrix}$&
$\begin{matrix}\scalebox{0.9}{
	\begin{tikzpicture}
		\draw (0,0) to [out=30,in=150] (2,0);
		\draw (1.5,0) to [out=30,in=150] (3.5,0);
		\draw (3,0) to [out=30,in=150] (5,0);
		\node at (0.25,1) {\begin{tikzpicture}[scale=0.15]
			\draw (0,3)--(0,0)--(1,0)--(1,4) --(0,4)--(0,3);
			\draw (0,1)--(1,1);
			\draw (0,3)--(1,3);
			\draw (0,2)--(1,2);
			\end{tikzpicture}
			
		};
		\node at (1.75,1) {\begin{tikzpicture}[scale=0.15]
				\draw (2,1)--(2,4)--(0,4)--(0,3)--(1,3)--(1,1)--(2,1)--(2,1);
				\draw (1,2)--(2,2);
				\draw (1,4)--(1,3)--(2,3);
			\end{tikzpicture}
			
		};
		\node at (3.25,1) {\begin{tikzpicture}[scale=0.15]
			\draw (0,1)--(0,0)--(3,0)--(3,1)--(1,1)--(1,2) --(0,2)--(0,1);
			\draw (0,1)--(1,1)--(1,0);
			\draw (2,0)--(2,1);
			\end{tikzpicture}
			
		};
		\node at (4.75,1) {\begin{tikzpicture}[scale=0.15]
			\draw (3,0)--(0,0)--(0,1)--(4,1) --(4,0)--(3,0);
			\draw (1,0)--(1,1);
			\draw (3,0)--(3,1);
			\draw (2,0)--(2,1);
			\end{tikzpicture}
			
		};
\end{tikzpicture}}
\end{matrix}$\\\hline\hline
$\begin{matrix}
\scalebox{1}{\begin{tikzpicture}[scale=0.15]
\draw (0,1)--(0,4)--(1,4)--(1,1)--(3,1)--(3,0)--(6,0)--(6,-1)--(2,-1)--(2,0)--(0,0)--(0,1);
\draw (0,2)--(1,2);
\draw (0,3)--(1,3);
\draw (0,1)--(1,1)--(1,0);
\draw (2,1)--(2,0)--(3,0)--(3,-1);
\draw (4,0)--(4,-1);
\draw (5,0)--(5,-1);
\end{tikzpicture}}
\end{matrix}$&
$\begin{matrix}\scalebox{0.9}{
	\begin{tikzpicture}
		\draw (0,0) to [out=30,in=150] (2,0);
		\draw (1.5,0) to [out=30,in=150] (3.5,0);
		\draw (3,0) to [out=30,in=150] (5,0);
		\node at (0.25,1) {\begin{tikzpicture}[scale=0.15]
			\draw (0,3)--(0,0)--(1,0)--(1,4) --(0,4)--(0,3);
			\draw (0,1)--(1,1);
			\draw (0,3)--(1,3);
			\draw (0,2)--(1,2);
			\end{tikzpicture}
			
		};
		\node at (1.75,1) {\begin{tikzpicture}[scale=0.15]
			\draw (0,2)--(0,0)--(2,0)--(2,1)--(1,1)--(1,3) --(0,3)--(0,2);
			\draw (0,1)--(1,1)--(1,0);
			\draw (0,2)--(1,2);
			\end{tikzpicture}
			
		};
		\node at (3.25,1) {\begin{tikzpicture}[scale=0.15]
				\draw (1,3)--(3,3)--(3,1)--(2,1)--(2,2)--(0,2)--(0,3)--(1,3);
				\draw (1,3)--(1,2);
				\draw (2,3)--(2,2)--(3,2);
			\end{tikzpicture}
			
		};
		\node at (4.75,1) {\begin{tikzpicture}[scale=0.15]
			\draw (3,0)--(0,0)--(0,1)--(4,1) --(4,0)--(3,0);
			\draw (1,0)--(1,1);
			\draw (3,0)--(3,1);
			\draw (2,0)--(2,1);
			\end{tikzpicture}
			
		};
\end{tikzpicture}}
\end{matrix}$&
$\begin{matrix}
\scalebox{1}{\begin{tikzpicture}[scale=0.15]
\draw (0,1)--(0,4)--(1,4)--(1,1)--(2,1)--(2,0)--(5,0)--(5,-1)--(1,-1)--(1,0)--(0,0)--(0,1);
			\draw (0,2)--(0,1)--(1,1)--(1,0)--(2,0);
			\draw (0,3)--(1,3);
			\draw (0,2)--(1,2);
			\draw (2,0)--(2,-1);
			\draw (3,0)--(3,-1);
			\draw (4,0)--(4,-1);
\end{tikzpicture}}
\end{matrix}$&
$\begin{matrix}\scalebox{0.9}{
	\begin{tikzpicture}
		\draw (0,0) to [out=30,in=150] (2,0);
		\draw (1.5,0) to [out=30,in=150] (3.5,0);
		\draw (3,0) to [out=30,in=150] (5,0);
		\node at (0.25,1) {\begin{tikzpicture}[scale=0.15]
			\draw (0,3)--(0,0)--(1,0)--(1,4) --(0,4)--(0,3);
			\draw (0,1)--(1,1);
			\draw (0,3)--(1,3);
			\draw (0,2)--(1,2);
			\end{tikzpicture}
			
		};
		\node at (1.75,1) {\begin{tikzpicture}[scale=0.15]
		\draw (2,1)--(2,2)--(1,2)--(1,3)--(0,3)--(0,1)--(1,1)--(1,0)--(2,0)--(2,1);
		\draw (2,1)--(1,1)--(1,2)--(0,2);
			\end{tikzpicture}
			
		};
		\node at (3.25,1) {\begin{tikzpicture}[scale=0.15]
		\draw (1,2)--(2,2)--(2,1)--(3,1)--(3,0)--(1,0)--(1,1)--(0,1)--(0,2)--(1,2);
		\draw (1,2)--(1,1)--(2,1)--(2,0);
			\end{tikzpicture}
			
		};
		\node at (4.75,1) {\begin{tikzpicture}[scale=0.15]
			\draw (3,0)--(0,0)--(0,1)--(4,1) --(4,0)--(3,0);
			\draw (1,0)--(1,1);
			\draw (3,0)--(3,1);
			\draw (2,0)--(2,1);
			\end{tikzpicture}
			
		};
\end{tikzpicture}}
\end{matrix}$\\\hline\hline
$\begin{matrix}
\scalebox{1}{\begin{tikzpicture}[scale=0.15]
			\draw (0,1)--(0,0)--(1,0)--(1,-1)--(5,-1)--(5,0)--(2,0)--(2,1)--(1,1)--(1,2)--(0,2)--(0,5) --(-1,5)--(-1,1)--(0,1);
			\draw (0,2)--(0,1)--(1,1)--(1,0)--(2,0);
			\draw (0,4)--(-1,4);
			\draw (0,3)--(-1,3);
			\draw (0,2)--(-1,2);
			\draw (2,0)--(2,-1);
			\draw (3,0)--(3,-1);
			\draw (4,0)--(4,-1);
\end{tikzpicture}}
\end{matrix}$&
$\begin{matrix}\scalebox{0.9}{
	\begin{tikzpicture}
		\draw (0,0) to [out=30,in=150] (2,0);
		\draw (1.5,0) to [out=30,in=150] (3.5,0);
		\draw (3,0) to [out=30,in=150] (5,0);
		\node at (0.25,1) {\begin{tikzpicture}[scale=0.15]
			\draw (0,3)--(0,0)--(1,0)--(1,4) --(0,4)--(0,3);
			\draw (0,1)--(1,1);
			\draw (0,3)--(1,3);
			\draw (0,2)--(1,2);
			\end{tikzpicture}
			
		};
		\node at (1.75,1) {\begin{tikzpicture}[scale=0.15]
		\draw (2,1)--(2,2)--(1,2)--(1,3)--(0,3)--(0,1)--(1,1)--(1,0)--(2,0)--(2,1);
		\draw (2,1)--(1,1)--(1,2)--(0,2);
			\end{tikzpicture}
			
		};
		\node at (3.25,1) {\begin{tikzpicture}[scale=0.15]
		\draw (1,2)--(2,2)--(2,1)--(3,1)--(3,0)--(1,0)--(1,1)--(0,1)--(0,2)--(1,2);
		\draw (1,2)--(1,1)--(2,1)--(2,0);
			\end{tikzpicture}
			
		};
		\node at (4.75,1) {\begin{tikzpicture}[scale=0.15]
			\draw (3,0)--(0,0)--(0,1)--(4,1) --(4,0)--(3,0);
			\draw (1,0)--(1,1);
			\draw (3,0)--(3,1);
			\draw (2,0)--(2,1);
			\end{tikzpicture}
			
		};
\end{tikzpicture}}
\end{matrix}$&
$\begin{matrix}
\scalebox{1}{\begin{tikzpicture}[scale=0.15]
\draw (0,1)--(0,4)--(1,4)--(1,1)--(2,1)--(2,-1)--(4,-1)--(4,-2)--(7,-2)--(7,-3)--(3,-3)--(3,-2)--(1,-2)--(1,0)--(0,0)--(0,1);
\node at (0.5,4) {$\ $}; 
\draw (0,3)--(1,3);
\draw (0,2)--(1,2);
\draw (0,1)--(1,1)--(1,0)--(2,0);
\draw (1,-1)--(2,-1)--(2,-2);
\draw (3,-1)--(3,-2)--(4,-2)--(4,-3);
\draw (5,-2)--(5,-3);
\draw (6,-2)--(6,-3);
\end{tikzpicture}}
\end{matrix}$&
$\begin{matrix}\scalebox{0.9}{
	\begin{tikzpicture}
		\draw (0,0) to [out=30,in=150] (2,0);
		\draw (1.5,0) to [out=30,in=150] (3.5,0);
		\draw (3,0) to [out=30,in=150] (5,0);
		\node at (0.25,1) {\begin{tikzpicture}[scale=0.15]
			\draw (0,3)--(0,0)--(1,0)--(1,4) --(0,4)--(0,3);
			\draw (0,1)--(1,1);
			\draw (0,3)--(1,3);
			\draw (0,2)--(1,2);
			\end{tikzpicture}
			
		};
		\node at (1.75,1) {\begin{tikzpicture}[scale=0.15]
				\draw (2,1)--(2,4)--(0,4)--(0,3)--(1,3)--(1,1)--(2,1)--(2,1);
				\draw (1,2)--(2,2);
				\draw (1,4)--(1,3)--(2,3);
			\end{tikzpicture}
			
		};
		\node at (3.25,1) {\begin{tikzpicture}[scale=0.15]
				\draw (1,3)--(3,3)--(3,1)--(2,1)--(2,2)--(0,2)--(0,3)--(1,3);
				\draw (1,3)--(1,2);
				\draw (2,3)--(2,2)--(3,2);
			\end{tikzpicture}
			
		};
		\node at (4.75,1) {\begin{tikzpicture}[scale=0.15]
			\draw (3,0)--(0,0)--(0,1)--(4,1) --(4,0)--(3,0);
			\draw (1,0)--(1,1);
			\draw (3,0)--(3,1);
			\draw (2,0)--(2,1);
			\end{tikzpicture}
			
		};
\end{tikzpicture}}
\end{matrix}$\\\hline\hline
		\end{tabular}
		\caption{Description of the chambers for the action of $\Z/4\Z$.}
\label{camerez4}
	\end{figure}
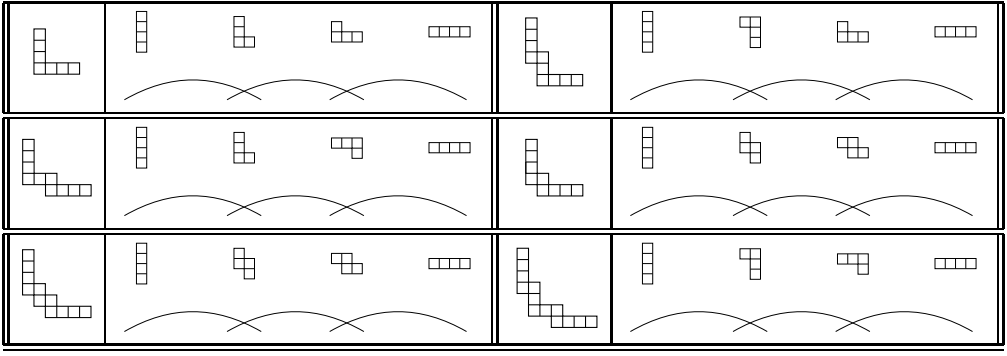

Notice that the first $4=2^{4-2}$ pictures correspond to simple chambers. Moreover, as predicted by \Cref{TEO1}, the possible shapes for the chamber stairs are $ 6 = (4-1) ! $, and there are 4 different ways to label each chamber stair.

Note also that, after having labeled each box appropriately, the first and last chambers in \Cref{camerez4} correspond to $C_G$ and $C_G^{\OP}$ respectively (see \Cref{hilbop}).
\end{example}
\section{The costruction of the tautological bundles \texorpdfstring{$\Calr_C$}{}}
The quasi projective variety $\Calm_C$ is a fine moduli space obtained by GIT as described in \cite{KING} by King. In particular, there exists a universal family $\Calu_C\in\Ob\Coh(\Calm_C\times\A^2)$. The tautological bundle is the pushforward 
$$\Calr_C=(\pi_{\Calm_C})_*(\Calu_C).$$
It is a vector bundle of rank $k=|G|$ whose fibers are $G$-constellations and, more precisely, over each point $[\Calf]\in\Calm_C$ the fiber $(\Calr_C)_{[\Calf]}$ is canonically isomorphic to the space of global sections $H^0(\A^2,\Calf)$.

In this section we give an explicit construction of the tautological bundles $\Calr_C$ for all chambers $C\subset\Theta^{\gen}$ in terms of their chamber stairs. We will adopt the same notation as in \Cref{toric resolution}.

\begin{notation}
From now on, given a coherent monomial ideal sheaf $\Calk\subset\Calo_{\A^2}$, we denote by $\widetilde{\Calk}$ the $\Calo_Y$-module defined by
\[
\widetilde{\Calk}= \varepsilon^*\pi_*\Calk/\tor_{\Calo_Y}\varepsilon^*\pi_*\Calk.
\]
\end{notation}
\begin{lemma}\label{GSVsullecarte} Suppose that $\Calk$ is generated by the monomials $x^{\alpha_1}y^{\beta_1},\ldots,x^{\alpha_s}y^{\beta_s}$. Then, on each toric chart $U_j\subset Y$ with coordinates $(a_j,c_j)$, the sheaf $\widetilde{\Calk}$ agrees with the sheaf $\Calh_j$ associated to the $\C[a_j,c_j]$-module:
	$$H_j= \frac{K_j}{K_j\cap I_j}\subset \frac{\C[a_j,c_j,x,y]}{K_j\cap I_j},$$
	where $K_j$ and $I_j$ are the ideals of $\C[a_j,c_j,x,y]$ given by
	$$K_j=(x^{\alpha_1}y^{\beta_1},\ldots,x^{\alpha_s}y^{\beta_s})$$
	and
	$$I_j=(a_jy^{k-j}-x^{j},c_jx^{j-1}-y^{k-j+1},a_jc_j-xy),$$
	and the gluings on the intersections $U_i\cap U_j$, for $ 1\le i ,j \le k $, are given by:
\[
\begin{tikzcd}[row sep=tiny]
\Gamma(U_i\cap U_j,\Calh_i) \arrow{r}{\varphi_{ij}} &  \Gamma(U_i\cap U_j,\Calh_j) \\
x\arrow[mapsto]{r} & x,\\
y\arrow[mapsto]{r} & y,\\
a_i\arrow[mapsto]{r} & a_j^{i-j+1}c_j^{i-j},\\
c_i\arrow[mapsto]{r} & a_j^{j-i}c_j^{j-i+1}.
\end{tikzcd}
\]
\end{lemma}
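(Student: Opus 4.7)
The plan is to identify $\widetilde{\Calk}|_{U_j}$ explicitly, chart by chart, and then observe that the gluings are forced by the toric transitions of $Y$. Since $\pi:\A^2\to\A^2/G$ is affine and finite and $U_j=\Spec\C[a_j,c_j]$ is affine, the pullback $\varepsilon^*\pi_*\Calk$ restricts on $U_j$ to the sheaf associated to
\[
M_j \;:=\; \C[a_j,c_j]\otimes_{\C[x,y]^G}K,
\]
where $K=(x^{\alpha_1}y^{\beta_1},\dots,x^{\alpha_s}y^{\beta_s})\subset\C[x,y]$ is viewed as a $\C[x,y]^G$-module via the presentation $\C[x,y]^G=\C[x^k,y^k,xy]/(x^ky^k-(xy)^k)$.

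Next I would introduce the $\C[a_j,c_j]$-linear map
\[
\Psi_j\colon M_j \longrightarrow \C[a_j,c_j,x,y]/I_j,\qquad p\otimes f\longmapsto p\cdot f.
\]
Well-definedness reduces to checking the three congruences $\varepsilon_j^*(x^k)\equiv x^k$, $\varepsilon_j^*(y^k)\equiv y^k$ and $\varepsilon_j^*(xy)\equiv xy$ modulo $I_j$: the last is a generator of $I_j$, and the other two follow from short derivations such as $x^k=x^{k-j}\cdot x^j=x^{k-j}\cdot a_jy^{k-j}=a_j(xy)^{k-j}=a_j^{k-j+1}c_j^{k-j}$. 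By construction the image of $\Psi_j$ is the ideal $(K_j+I_j)/I_j\subset \C[a_j,c_j,x,y]/I_j$, which via the second isomorphism theorem equals $H_j=K_j/(K_j\cap I_j)$.

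The key technical step is the claim that \emph{$\C[a_j,c_j,x,y]/I_j$ is a free $\C[a_j,c_j]$-module of rank $k$, with basis $\mathcal{B}=\{1,x,\dots,x^{j-1},y,\dots,y^{k-j}\}$.} The spanning part is a direct monomial reduction: the relation $xy=a_jc_j$ turns every mixed monomial into a pure power of $x$ or $y$ times a coefficient in $\C[a_j,c_j]$, and then $x^j=a_jy^{k-j}$, $y^{k-j+1}=c_jx^{j-1}$ bring those pure powers inside $\mathcal{B}$. For linear independence, localize at $a_jc_j$: then $y=a_jc_j\cdot x^{-1}$ and the ring becomes $\C[a_j,c_j,(a_jc_j)^{-1}][x]/(x^k-a_j^{k-j+1}c_j^{k-j})$, visibly free of rank $k$; since $a_jc_j$ is a nonzerodivisor in the domain $\C[a_j,c_j]$, independence descends to $\C[a_j,c_j,x,y]/I_j$ itself. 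Consequently $H_j$, sitting as an ideal inside a free module, is torsion-free. A generic-rank count then concludes: $M_j$ has generic rank $k$ because $K$ agrees with $\C[x,y]$ generically and $\C[x,y]$ is free of rank $k$ over $\C[x,y]^G$, while $H_j$ has generic rank $k$ as a nonzero ideal of the rank-$k$ ring. Therefore $\Psi_j$ is generically an isomorphism, so $\ker\Psi_j$ is a torsion submodule of $M_j$; combined with the torsion-freeness of $H_j$, this forces $\ker\Psi_j=\tor_{\C[a_j,c_j]}M_j$ and yields $\widetilde{\Calk}|_{U_j}\cong\Calh_j$.

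Finally, the gluing maps $\varphi_{ij}$ are essentially automatic: the variables $x,y$ are global coordinates on $\A^2$ and so survive the chart changes unchanged, while $a_i,c_i$ restrict to $a_j^{i-j+1}c_j^{i-j}$ and $a_j^{j-i}c_j^{j-i+1}$ via the standard toric transition formulas recalled in \Cref{toric resolution}; one then checks directly that these substitutions send $I_i$ into $I_j$ and match $K_i$ to $K_j$. The main obstacle in the proof is the freeness of $\C[a_j,c_j,x,y]/I_j$ as a $\C[a_j,c_j]$-module: the spanning half is routine bookkeeping, but the linear independence relies on the localization trick above to reduce to a visibly cyclic algebra.
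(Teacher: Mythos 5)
Your argument is correct, and it is precisely the ``direct computation'' that the paper's own proof (a single sentence deferring everything to the toric relations \eqref{toricrelations}) leaves to the reader: identifying $\varepsilon^*\pi_*\Calk|_{U_j}$ with $\C[a_j,c_j]\otimes_{\C[x,y]^G}K$, mapping it onto $H_j$, and checking that the kernel is exactly the torsion. The key structural input you supply — that $\C[a_j,c_j,x,y]/I_j$ is free of rank $k$ over $\C[a_j,c_j]$ with basis $\{1,\dots,x^{j-1},y,\dots,y^{k-j}\}$ — is sound, since $\C[a_j,c_j,(a_jc_j)^{-1}]=\C[a_j^{\pm1},c_j^{\pm1}]$ already makes $a_j^{k-j+1}c_j^{k-j}$ a unit, so the localized algebra $\C[a_j^{\pm1},c_j^{\pm1}][x]/(x^k-a_j^{k-j+1}c_j^{k-j})$ is visibly free and linear independence descends.
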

\begin{proof} The proof is achieved by direct computation, after noticing that the gluings on the intersections are deduced from the toric description of the toric quasiprojective variety $\Calm_C$ given at the beginning of \Cref{toric resolution} and, in particular, from Equations \eqref{toricrelations}.
\end{proof}
\begin{remark}
     Using the the relations given in \eqref{toricrelations}, the modules $H_j$, for $j=1,\ldots,k$, can be regarded as $\C[a_j , c_j ]$-submodules of the rational function field $\C(x, y)$.
\end{remark}
\begin{remark}\label{GSFDGIUSTO}
 If $x^{\alpha_1}y^{\beta_1},\ldots,x^{\alpha_s}y^{\beta_s}$ are the generators of some $C$-stair $\Gamma_C$ and $\Calk$ is defined as in \Cref{GSVsullecarte}, all the $G$-sFd associated to the toric fibers of $\widetilde{\Calk}$ are substairs of $\Gamma_C$. This is a consequence of Nakayama's Lemma together with the following three facts:
		\begin{equation}\label{app}
		\begin{array}{cc}
		    \forall j=1,\ldots,k,\forall i=1,\ldots,s& x^{\alpha_i+1}y^{\beta_i+1}\in(K_j\cap I_j)+(a_j,c_j),\\
		    \forall j=1,\ldots,k,& x^{\alpha_1}y^{\beta_1+k}\in(K_j\cap I_j)+(a_j,c_j),\\
		\forall j=1,\ldots,k,& x^{\alpha_s+k}y^{\beta_s}\in(K_j\cap I_j)+(a_j,c_j).
		\end{array}
		\end{equation}
		The relations \eqref{app} follow from the easy observations that
		$$x^{\alpha_i}y^{\beta_i}\cdot(a_jc_j-xy)=a_jc_jx^{\alpha_i}y^{\beta_i}-x^{\alpha_i+1}y^{\beta_i+1}\in K_j\cap I_j,$$
		$$y^{j-1}\cdot x^{\alpha_1}y^{\beta_1}\cdot(c_jx^{j-1}-y^{k-j+1})=c_jx^{\alpha_1+j-1}y^{\beta_1+j-1}-x^{\alpha_1}y^{\beta_1+k}\in K_j\cap I_j,$$
		$$x^{k-j}\cdot x^{\alpha_s}y^{\beta_s}\cdot(a_jy^{k-j}-x^{j})=a_jx^{\alpha_s+k-j }y^{\beta_s+k-j}-x^{\alpha_s+k}y^{\beta_s}\in K_j\cap I_j.$$
\end{remark}
In this last part of the paper we state and prove the last main theorem. Before to give the proof, we   also state and prove some corollaries and results needed in the proof.
\begin{theorem}\label{costruzione}
	Let $C\subset\Theta^{\gen}$ be a chamber and let $\Gamma_C\subset\Calt_G$ be a $C$-stair. Suppose that $\Gamma_C$ has $s\ge1$ ordered (see \Cref{orderbox}) generators $v_1,\ldots,v_s$ with associated monomials 
	$$x^{\alpha_1}y^{\beta_1},\ldots,x^{\alpha_s}y^{\beta_s}\in\C[x,y].$$ 
	Consider the ideal sheaf $\Calk= (x^{\alpha_1}y^{\beta_1},\ldots,x^{\alpha_s}y^{\beta_s})\Calo_{\A^2}$, then  
	$$\Calr_C\cong\varepsilon^*\pi_*\Calk /\tor_{\Calo_{\Calm_C}}(\varepsilon^*\pi_*\Calk).$$
\end{theorem}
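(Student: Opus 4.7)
The plan is to prove the isomorphism locally on each toric chart of $\Calm_C$ and then conclude via the universal property of the fine moduli space. Write $\widetilde{\Calk} = \varepsilon^*\pi_*\Calk/\tor$. By \Cref{GSVsullecarte}, the restriction $\widetilde{\Calk}|_{U_j}$ is the sheaf associated to the $\C[a_j,c_j]$-module $H_j = K_j/(K_j\cap I_j)$, with explicit transition maps on overlaps given by the toric substitutions $a_i = a_j^{i-j+1}c_j^{i-j}$, $c_i = a_j^{j-i}c_j^{j-i+1}$.

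The core of the proof is to establish that, for every $j=1,\ldots,k$, the module $H_j$ is free of rank $k$ as a $\C[a_j,c_j]$-module, with a canonical basis indexed by the boxes of the abstract $G$-stair $\Gamma_j \subset \Gamma_C$ of the $j$-th toric $C$-stable $G$-constellation $\Calf_j$ (in the sense of \Cref{orderconst}). The construction is as follows: each box of $\Gamma_j$ represents some monomial $x^\alpha y^\beta \in K_j$, which descends to a distinguished element of $H_j$. Using the three relations defining $I_j$, namely $a_jy^{k-j}\equiv x^j$, $c_jx^{j-1}\equiv y^{k-j+1}$ and $a_jc_j\equiv xy$, together with the generators of $K_j$, one rewrites any monomial in $K_j$ modulo $K_j\cap I_j$ as a $\C[a_j,c_j]$-linear combination of these $k$ distinguished elements. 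The three relations in $I_j$ correspond precisely to the combinatorial content of passing from $\Gamma_j$ to the linking stair (as in \Cref{propcoppia} and \Cref{propcoppia1}): they turn monomials lying outside $\Gamma_j$ into scalar multiples (by $a_j$ or $c_j$) of monomials lying inside $\Gamma_j$. To see this actually yields a basis, I would use \Cref{GSFDGIUSTO} together with \Cref{hwchart} to pin down the fiber at the origin of $U_j$: \Cref{GSFDGIUSTO} forces this fiber to be a substair of $\Gamma_C$, and the width/height constraint of \Cref{hwchart} forces it to be exactly $\Gamma_j$ on $k$ boxes.

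Next I would verify the gluings. On overlaps $U_i\cap U_j$, the substitutions from \Cref{GSVsullecarte} exchange the distinguished generators of $H_i$ and $H_j$ up to invertible monomials in $a_j,c_j$, which is a direct bookkeeping using the monomials in $\Gamma_i$ and $\Gamma_j$ regarded inside the representation tableau. Combining this with the local freeness of each $H_j$ shows that $\widetilde{\Calk}$ is a locally free, $G$-equivariant sheaf of rank $k$ on $\Calm_C$ whose fibers over the toric fixed points are the $C$-stable toric $G$-constellations $\Calf_1,\ldots,\Calf_k$. By $\T^2$-equivariance and \Cref{defomations}, the two toric coordinates $(a_j,c_j)$ on $U_j$ realize precisely the allowed nilpotent deformations of $\Calf_j$ inside the chamber $C$; hence every fiber of $\widetilde{\Calk}$ is a $C$-stable $G$-constellation.

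Finally, the family $\widetilde{\Calk}$ defines a classifying morphism $\Calm_C \to \Calm_C$ through the universal property of \Cref{CRAWthm}. On the open dense complement of the exceptional locus, where the free orbits are parametrized, this morphism is forced to be the identity by the uniqueness statement immediately following \Cref{CRAWthm}. Since $\Calm_C$ is separated and the morphism equals the identity on a dense subset, it is the identity everywhere, and pulling back $\Calu_C$ gives $\widetilde{\Calk}\cong \Calr_C$. The main obstacle is the freeness statement in the second step: one must verify that the reductions of monomials modulo $K_j\cap I_j$ produce a well-defined normal form, which is a combinatorial argument controlled by the shape of $\Gamma_C$ and the definition of the decreasing and increasing linking stairs used to build it.
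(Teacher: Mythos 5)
Your overall architecture matches the paper's: describe $\widetilde{\Calk}=\varepsilon^*\pi_*\Calk/\tor$ chart by chart via \Cref{GSVsullecarte}, identify the fibers over the torus-fixed points with the toric $C$-stable $G$-constellations, and conclude by a uniqueness argument (the paper uses \Cref{unicCstair} on chamber stairs rather than your classifying-morphism argument, but both endings are viable). The problem is that the central step --- pinning down the fiber $\widetilde{\Calk}_{0_j}$ --- is exactly where the real work lies, and your proposal does not supply it. You invoke \Cref{hwchart} to force the fiber to be $\Gamma_j$, but \Cref{hwchart} is a statement about the fibers of the universal family $\Calu_C$ over the origins of the charts; applying it to $\widetilde{\Calk}_{0_j}$ presupposes that $\widetilde{\Calk}$ is the universal family, which is the conclusion you are after. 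A priori, after \Cref{GSFDGIUSTO} you only know that $\widetilde{\Calk}_{0_j}$ corresponds to \emph{some} substair of $\Gamma_C$: you do not yet know that it is connected, that it has exactly $k$ boxes, or that its width is $j$.

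The paper closes this gap with two nontrivial arguments that have no counterpart in your proposal. First (STEP 2), connectedness of the diagram $\Gamma_j$ of $\widetilde{\Calk}_{0_j}$ is proved by introducing the properties \textbf{(A$_j$)} and \textbf{(C$_j$)} of \Cref{defajcj} --- a box of $\Gamma_C$ survives in the fiber iff one of these holds --- and then using \Cref{fine}, which shows these properties propagate monotonically along the chamber stair. Second (STEP 3), the dimension count $\dim_\C\widetilde{\Calk}_{0_j}=k$ is obtained by combining the semicontinuity lower bound $\dim_\C F_j\ge k$ (since $\widetilde{\Calk}$ has rank $k$ on the dense open locus) with a direct combinatorial upper bound $\mathfrak{w}\le j$, $\mathfrak{h}\le k-j+1$ extracted from explicit elements of $K_j\cap I_j+(a_j,c_j)$; only then does $\dim=\mathfrak{w}+\mathfrak{h}-1$ give equality, and the width statement follows. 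Your remark that ``one must verify that the reductions of monomials modulo $K_j\cap I_j$ produce a well-defined normal form'' correctly identifies the obstacle, but deferring it to ``a combinatorial argument controlled by the shape of $\Gamma_C$'' leaves the proof incomplete: without the analogue of \Cref{fine} and the two-sided dimension bound, the claimed freeness of $H_j$ with basis indexed by $\Gamma_j$ is unsupported.
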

The following corollaries are  direct consequences of \Cref{costruzione}.
\begin{corollary}\label{sullecarte} On each toric chart $U_j\subset\Calm_C$ with coordinates $(a_j,c_j)$, the tautological bundle ${\Calr_C}_{|_{U_j}}$ agrees with the sheaf $\Calh_j$ associated to the $\C[a_j,c_j]$-module $H_j$ in \Cref{GSVsullecarte}.
\end{corollary}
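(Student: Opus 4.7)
The plan is to prove the isomorphism by a chart-by-chart analysis on $\Calm_C$ combined with an application of the universal property of the moduli space.

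First, by \Cref{GSVsullecarte} the sheaf $\widetilde{\Calk}=\varepsilon^*\pi_*\Calk/\tor_{\Calo_{\Calm_C}}(\varepsilon^*\pi_*\Calk)$ is described, on each toric chart $U_j\subset\Calm_C$, by the $\C[a_j,c_j]$-module
\[
H_j=K_j/(K_j\cap I_j),
\]
where $K_j=(x^{\alpha_1}y^{\beta_1},\ldots,x^{\alpha_s}y^{\beta_s})$ and $I_j$ encodes the toric relations \eqref{toricrelations}. The key computation is to show that $H_j$ is a free $\C[a_j,c_j]$-module of rank $k$, with a distinguished monomial basis indexed by the boxes of the $j$-th $G$-stair $\Gamma_j\subset\Gamma_C$. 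I would first show that any monomial of $K_j$ reduces modulo $I_j$ to a $\C[a_j,c_j]$-linear combination of these basis monomials, by repeatedly using the three defining relations of $I_j$ to trade high powers of $x$ (resp. $y$) for the local coordinate $a_j$ (resp. $c_j$) together with the identification $a_jc_j=xy$. Combined with \Cref{GSFDGIUSTO}, which forces the fiber at the origin to be a substair of $\Gamma_C$, this simultaneously produces a spanning set of size $k$ and rules out any relations among them.

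Second, I would identify the fiber of $\widetilde{\Calk}|_{U_j}$ at the origin with the $j$-th toric $C$-stable $G$-constellation $\Calf_j$. This follows from the rank-$k$ freeness together with \Cref{hwchart}, which pins down the height and width of $\Calf_j$, and \Cref{GSFDGIUSTO}: the fiber is a $G$-sFd substair of $\Gamma_C$ of size $k$ compatible with the coordinate directions of $U_j$, and hence must equal $\Gamma_j$. Using $\T^2$-equivariance, the whole torus-fixed locus of $\Calm_C$ corresponds to the expected toric $C$-stable $G$-constellations, so by openness of stability and by a direct check on the stratum of free orbits, every fiber of $\widetilde{\Calk}$ is a $C$-stable $G$-constellation.

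Third, the family $\widetilde{\Calk}$ together with its inherited $G$- and $\C[x,y]$-actions defines a flat family of $C$-stable $G$-constellations parametrized by $\Calm_C$, so the universal property of the moduli space produces a morphism $\varphi\colon\Calm_C\to\Calm_C$ and an isomorphism $\widetilde{\Calk}\cong(\varphi\times\id_{\A^2})^*\Calu_C$. On the complement $U_C$ of the exceptional locus, both $\widetilde{\Calk}$ and $\Calu_C$ restrict canonically to the structure sheaf of the corresponding free orbit (since $\Calk$ is the unit ideal on any free orbit), so $\varphi|_{U_C}=\id$; by density and separatedness $\varphi=\id$ globally, and pushing forward along $\pi_{\Calm_C}$ gives $\Calr_C\cong\widetilde{\Calk}$.

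The main obstacle is the rank-$k$ freeness of $H_j$ in step one: producing an explicit monomial basis indexed by the $j$-th $G$-stair of $\Gamma_C$ requires careful bookkeeping of the positions of the generators $x^{\alpha_i}y^{\beta_i}$ relative to the chart's coordinate hyperplanes, and a verification that the relations imposed by $I_j$ match exactly the combinatorial overlap between consecutive $G$-stairs through the linking stairs of \Cref{CONCLUSION}. Once this is done the remaining steps are a clean glueing argument and a standard application of the universal property.
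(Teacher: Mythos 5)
Your overall architecture --- describe $\widetilde{\Calk}$ chartwise via \Cref{GSVsullecarte}, show its toric fibers are the expected toric $C$-stable $G$-constellations, then conclude by a gluing/family argument --- is the same as the paper's, which deduces \Cref{sullecarte} from \Cref{costruzione}. Your endgame (universal property, $\varphi|_{U_C}=\id$, density) is a legitimate variant of the paper's conclusion via uniqueness of chamber stairs (\Cref{unicCstair}), though you should also address the ambiguity of the universal family up to a line bundle pulled back from the base: triviality on the complement of the exceptional \emph{divisor} does not kill that ambiguity, so some normalization of $\Calr_C$ must be invoked.

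The genuine gap is in your second step, the identification of the fiber of $\widetilde{\Calk}$ at the origin $0_j\in U_j$ with $\Calf_j$. What your inputs actually give you (rank-$k$ freeness, \Cref{GSFDGIUSTO}, and the width computation underlying \Cref{hwchart}) is that this fiber is a connected $k$-box substair of $\Gamma_C$ of width $j$. That does \emph{not} force it to equal $\Gamma_j$: a chamber stair typically contains several connected $k$-box substairs of the same width, exactly one of which is $C$-stable, the others having two horizontal or two vertical cuts (\Cref{CONCLUSION}). For example, in the $k=5$ chamber stair of \Cref{CHAMBER} there are three width-$2$ windows and only one of them is $\Gamma_2$. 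If the fiber at $0_j$ were one of the unstable windows, your family would not consist of $C$-stable constellations and the classifying morphism in your third step would not exist, so the ``hence must equal $\Gamma_j$'' cannot be waved through. This is precisely where the paper does its work: the properties \textbf{(A$_j$)} and \textbf{(C$_j$)} of \Cref{defajcj} together with \Cref{fine} determine exactly which boxes of $\Gamma_C$ survive modulo $(a_j,c_j)$, the inequalities in STEP 3 of the proof of \Cref{costruzione} bound the resulting window, and the final identification is made globally by showing the toric fibers form the toric points of \emph{some} chamber with chamber stair $\Gamma_C$ and invoking \Cref{unicCstair} (the explicit fiber computation is only done for $j=1,k$ in \Cref{onefibreisok}). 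Your proposed first step --- an explicit monomial basis of $H_j$ indexed by $\Gamma_j$ --- would of course settle this, but asserting that the basis is indexed by $\Gamma_j$ rather than by one of the other width-$j$ windows presupposes the very identification at issue; as you yourself flag, this is the main obstacle, and it is exactly the content that is missing from the proposal.
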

 \begin{corollary}\label{nuovo GSV}
	    In the hypotheses of \Cref{costruzione}, the $\Calo_Y$-module $\widetilde{\Calk}$ is locally free of rank $|G|$.
	\end{corollary}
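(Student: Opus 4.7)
The plan is to deduce the statement directly from \Cref{costruzione}. That theorem supplies a canonical isomorphism $\widetilde{\Calk} \cong \Calr_C$, so the claim reduces to the assertion that the tautological bundle $\Calr_C$ is locally free of rank $|G|$. This is a standard property of the tautological bundle of a fine moduli space of $G$-constellations, recalled at the beginning of the present section: over every closed point $[\Calf] \in \Calm_C$ the fiber $(\Calr_C)_{[\Calf]}$ is canonically isomorphic to $H^0(\A^2,\Calf) \cong \C[G]$ as a $\C$-vector space. Hence the coherent sheaf $\Calr_C$ has constant fiber rank $|G|$ on the smooth (in fact toric) variety $\Calm_C$, and constant fiber rank on a reduced scheme forces local freeness.

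If instead one prefers a direct verification bypassing \Cref{costruzione}, the strategy is to argue chart by chart via \Cref{sullecarte}. On each toric chart $U_j \cong \Spec\C[a_j,c_j]$ of $\Calm_C$ the sheaf $\widetilde{\Calk}|_{U_j}$ is identified with the module $\Calh_j$ associated to $H_j = K_j/(K_j\cap I_j)$, so the task reduces to producing an explicit free $\C[a_j,c_j]$-basis of $H_j$ of cardinality $|G|$. A natural candidate is the set of $|G|$ monomials labeling the boxes of the $G$-stair of the toric $C$-stable $G$-constellation $\Calf_j$ sitting at the origin of $U_j$ (see \Cref{toric resolution} and \Cref{hwchart}). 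By \Cref{GSFDGIUSTO}, these $|G|$ elements span $H_j/(a_j,c_j)H_j$, and Nakayama's lemma, combined with a torus-equivariance argument to globalize from the closed fixed point to the whole of $U_j$, yields a surjection $\C[a_j,c_j]^{|G|} \onto H_j$.

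The remaining point — and the main obstacle in the second route — is to show that this surjection has no kernel, i.e. that the chosen monomials are $\C[a_j,c_j]$-linearly independent in $H_j$. The cleanest way to obtain this is to compute the fiber dimension at a generic (in particular, non-exceptional) point of $U_j$: there $\widetilde{\Calk}$ specializes to the structure sheaf of a free $G$-orbit, which is a $\C$-vector space of dimension exactly $|G|$. Constancy of the fiber dimension, applied to the surjection $\C[a_j,c_j]^{|G|}\onto H_j$ of sheaves between free modules of the same rank on a reduced and irreducible base, then forces the kernel to vanish and gives local freeness of rank $|G|$.
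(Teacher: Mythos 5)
Your first argument is exactly the paper's: the corollary is presented there as a direct consequence of \Cref{costruzione}, since that theorem identifies $\widetilde{\Calk}$ with the tautological bundle $\Calr_C$, which is locally free of rank $|G|$ because it is the pushforward of the universal family and its fibre over any point $[\Calf]\in\Calm_C$ is canonically $H^0(\A^2,\Calf)\cong\C[G]$. Your alternative chart-by-chart route is not needed and in any case largely re-derives STEP~3 of the proof of \Cref{costruzione} (the claim that the $|G|$ monomials of the $G$-stair of $\Calf_j$ span the fibre of $H_j$ at the origin of $U_j$ is precisely what that step establishes), so the first, shorter argument is the one to keep.
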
 
\begin{corollary}\label{quot}
Let $C$ and $\Calk$ be as in \Cref{costruzione}. Then, $\Calm_C$ can be identified with a closed $G$-invariant subvariety of $\Quot_{\Calk}^{|G|}(\A^2)$.
\end{corollary}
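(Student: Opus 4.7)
The plan is to exhibit a natural morphism $\Calm_C \to \Quot_{\Calk}^{|G|}(\A^2)$ via the universal property of the Quot scheme, and then argue that it is a closed immersion onto a $G$-invariant locus. The key ingredient provided by \Cref{costruzione} and \Cref{nuovo GSV} is that $\Calr_C = \widetilde{\Calk}$ is a quotient of $\varepsilon^*\pi_*\Calk$ that is locally free of rank $|G|$.

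First I would upgrade this to the level of families on $\Calm_C \times \A^2$. Writing $p\colon \Calm_C \times \A^2 \to \Calm_C$ and $q\colon \Calm_C \times \A^2 \to \A^2$ for the two projections, the universal family $\Calu_C$ satisfies $p_*\Calu_C = \Calr_C$. The natural evaluation map $p^*p_*\Calu_C \twoheadrightarrow \Calu_C$, combined with the surjection $\varepsilon^*\pi_*\Calk \twoheadrightarrow \Calr_C$ coming from \Cref{costruzione} and the canonical map $q^*\Calk \to p^*\varepsilon^*\pi_*\Calk$ (obtained by pulling back the adjunction unit $\Calk \to \pi^*\pi_*\Calk$ and using $\pi \circ q = \varepsilon \circ p$), one assembles a surjective morphism
\[
q^*\Calk \twoheadrightarrow \Calu_C
\]
of coherent sheaves on $\Calm_C \times \A^2$. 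Flatness of $\Calu_C$ over $\Calm_C$ (which holds because each fibre is a length-$|G|$ $G$-constellation and $\Calu_C$ is the universal family of a fine moduli problem) and the length condition on fibres then produce, by the universal property of the Quot functor, a canonical morphism $\Phi_C\colon \Calm_C \to \Quot_{\Calk}^{|G|}(\A^2)$.

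Next I would show $\Phi_C$ is a closed immersion. Injectivity on points: two $C$-stable $G$-constellations $\Calf, \Calf'$ having the same image under $\Phi_C$ means that the quotient maps $\Calk \twoheadrightarrow \Calf$ and $\Calk \twoheadrightarrow \Calf'$ coincide as quotients; but since by \Cref{costruzione} the generators $v_1,\ldots,v_s$ of any $C$-characteristic $G$-stair correspond precisely to images of the monomials $x^{\alpha_i}y^{\beta_i}$, this determines the $\C[x,y]$-module structure and the $G$-equivariant structure of $\Calf$ (cf.\ \Cref{GSFDGIUSTO}, whereby every toric fibre lands in the $C$-stair), hence $\Calf \cong \Calf'$ as $G$-constellations. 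Injectivity on tangent spaces follows by a standard comparison of deformation theories: an infinitesimal deformation of $[\Calf] \in \Calm_C$ is the same as an infinitesimal deformation of the quotient $\Calk \twoheadrightarrow \Calf$ that preserves the $G$-structure, because $\Calk$ itself is rigid and the kernel of $\Calk \twoheadrightarrow \Calf$ determines $\Calf$. Finally, $\Phi_C$ is proper: $\Calm_C$ is projective over $\A^2/G$ via $\varepsilon_C$, the composite $\Calm_C \to \Quot_{\Calk}^{|G|}(\A^2) \to \Sym^{|G|}(\A^2) \to \A^2/G$ factors the Hilbert--Chow morphism, and a proper monomorphism locally of finite presentation is a closed immersion.

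For $G$-invariance, the monomial ideal $\Calk$ is manifestly $G$-stable (each generator $x^{\alpha_i}y^{\beta_i}$ is a semi-invariant), so the $G$-action on $\A^2$ lifts to a $G$-action on $\Quot_{\Calk}^{|G|}(\A^2)$ sending $[\Calk \twoheadrightarrow \Calq]$ to $[\Calk \cong g^*\Calk \twoheadrightarrow g^*\Calq]$. The image of $\Phi_C$ consists of quotients whose target is a $G$-constellation, and if $\Calf$ is a $G$-constellation then so is $g^*\Calf$ (with the equivariant structure twisted as in the discussion preceding \Cref{toric torus}); moreover this twisted constellation is still $C$-stable since $C$-stability is invariant under the $\mathbb T^2$-action, of which $G\subset \mathbb T^2$ is a subgroup. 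Thus the image $\Phi_C(\Calm_C) \subset \Quot_{\Calk}^{|G|}(\A^2)$ is preserved by $G$, and the identification $\Calm_C \simeq \Phi_C(\Calm_C)$ realises $\Calm_C$ as a closed $G$-invariant subvariety, as required. The main technical obstacle I anticipate is checking carefully that $\Phi_C$ is a closed immersion (rather than merely a bijective morphism onto its image), especially the tangent-space injectivity: this is where one really uses that $\Calk$ is determined by the generators of the $C$-stair and that the equivariant Quot functor is represented by a $G$-invariant closed subscheme of the ordinary Quot scheme.
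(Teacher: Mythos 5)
The paper itself offers no written proof here: the corollary is listed among the ``direct consequences'' of \Cref{costruzione}, the intended content being that the local models of \Cref{GSVsullecarte} already exhibit the universal family as a flat family of length-$|G|$ quotients of $\Calk$, whence a classifying morphism to the Quot scheme. Your overall architecture --- build a surjection $q^*\Calk\twoheadrightarrow\Calu_C$, invoke the universal property of $\Quot$, then check that the resulting map is a proper injective unramified morphism and that the image is $G$-stable --- is the right way to flesh this out, and your properness and $G$-invariance arguments are essentially fine.

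There is, however, one concrete error in your construction of the surjection. You invoke ``the adjunction unit $\Calk\to\pi^*\pi_*\Calk$''; no such natural $\Calo_{\A^2}$-linear map exists. The unit of the adjunction between $\pi^*$ and $\pi_*$ lives on $\A^2/G$, and the natural map on $\A^2$ is the counit $\pi^*\pi_*\Calk\to\Calk$, which points the wrong way (concretely, $a\mapsto a\otimes 1$ from $K$ to $K\otimes_{\C[x,y]^G}\C[x,y]$ is only $\C[x,y]^G$-linear, not $\C[x,y]$-linear). The surjection $q^*\Calk\twoheadrightarrow\Calu_C$ does exist, but it should be read off from \Cref{GSVsullecarte}: on each chart $U_j\times\A^2$ the module $H_j=K_j/(K_j\cap I_j)$ is by definition a quotient of $K_j$, and $K_j$ is exactly the image of $q^*\Calk|_{U_j\times\A^2}$ in $\C[a_j,c_j,x,y]$ (equivalently, use $\Hom(q^*\Calk,\Calu_C)\cong\Hom(\Calk,q_*\Calu_C)$ and send each generating monomial $x^{\alpha_i}y^{\beta_i}$ to the corresponding global section of $\Calr_C\cong\varepsilon^*\pi_*\Calk/\tor_{\Calo_{\Calm_C}}(\varepsilon^*\pi_*\Calk)$). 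Once this is repaired, the rest goes through, but the tangent-space injectivity deserves more than the phrase ``standard comparison of deformation theories'': the point is that if the induced first-order deformation of the quotient $\Calk\twoheadrightarrow\Calf$ is trivial, then the deformed kernel is constant, so the deformed family is $\Calf\otimes_\C\C[\epsilon]$ equipped with the $G$-structure induced from the canonical (monomial) one on $\Calk$, and hence the corresponding tangent vector of the fine moduli space $\Calm_C$ vanishes.
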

\begin{remark}
    \Cref{nuovo GSV} is a generalisation of {\cite[Proposition 2.4.]{GSV}} in the abelian setting. 
\end{remark}
\begin{remark}\label{GSVhilb} For the trivial ideal $K=(1)=\C[x,y]$  \Cref{sullecarte} recovers Nakamura's description of the $G$-Hilbert scheme when $G$ is abelian (see \cite{NAKAMURA}).
\end{remark}
\begin{remark}\label{onefibreisok} Notice that, over the origin of the first and the last charts, the  $\Calo_{U_1}$-module $\Calh_1$ and the $\Calo_{U_k}$-module $\Calh_k$ have, as toric fibers, the expected $G$-constellations $\Calf_1$ and $\Calf_k$, i.e 	
$$\Calf_1\cong {\Calh_1}_{0_1}\cong\frac{(x^{\alpha_1}y^{\beta_1})}{(x^{\alpha_1}y^{\beta_1+k},x^{\alpha_1+1}y^{\beta_1})}\subset \frac{\C[x,y]}{(x^{\alpha_1}y^{\beta_1+k},x^{\alpha_1+1}y^{\beta_1})}$$
and
$$\Calf_k\cong {\Calh_k}_{0_k}\cong\frac{(x^{\alpha_s}y^{\beta_s})}{(x^{\alpha_s+k}y^{\beta_s},x^{\alpha_s}y^{\beta_s+1})}\subset \frac{\C[x,y]}{(x^{\alpha_s+k}y^{\beta_s},x^{\alpha_s}y^{\beta_s+1})},$$
where $0_i\in U_i$ is, for $i=1,k$, the origin.

We prove this only for the origin of $U_k$, the other proof is similar. We start by showing that 
$$x^{\alpha_i}y^{\beta_i}\in (K_k\cap I_k)+(a_k,c_k) \ \mbox{ for }i=1,\ldots,s-1. $$
Notice that, for all $ i=1,\ldots,s-1$, we have
$$\alpha_i\ge0,\quad\beta_i>\beta_{i+1}>\beta_{s}\ge 0,\quad\alpha_{i}+k-1\ge \alpha_{i+1} .$$
Therefore, we can write:
$$c_kx^{\alpha_i+k-1}y^{\beta_i-1}-x^{\beta_i}y^{\alpha_i}=
\begin{cases}
	c_kx^{\alpha_i+k-1-\alpha_{i+1}}y^{\beta_i-1-\beta_{i+1}}(x^{\alpha_{i+1}}y^{\beta_{i+1}})-x^{\alpha_i}y^{\beta_i}\\
	(x^{\alpha_i}y^{\beta_i-1})(c_kx^{k-1}-y),
\end{cases}$$
which implies
$$x^{\alpha_i}y^{\beta_i}\in(K_k\cap I_k )+(a_k,c_k)\ \forall i=1,\ldots,s-1.$$ 
Now, we have
$$K_k\cap I_k+(a_k,c_k)=(x^{\alpha_s}y^{\beta_s})\cap I_k+(a_k,c_k)=(x^{\alpha_s}y^{\beta_s})\cdot I_k+(a_k,c_k)=(x^{\alpha_s+k}y^{\beta_s},x^{\alpha_s}y^{\beta_s+1},a_k,c_k),$$
which gives
\[
 {\Calh_k}_{0_k}\cong \frac{(x^{\alpha_s}y^{\beta_s})}{(x^{\alpha_s+k}y^{\beta_s},x^{\alpha_s}y^{\beta_s+1},a_k,c_k)}\subset \frac{\C[x,y,a_k,c_k]}{(x^{\alpha_s+k}y^{\beta_s},x^{\alpha_s}y^{\beta_s+1},a_k,c_k)}.
\]
\end{remark}
\begin{definition}\label{defajcj}
Let $K\subset\C[x,y]$ be the ideal generated by the (ordered) set of monomials 
\[
\Set{x^{\alpha_i}y^{\beta_i}|i=1,\ldots,s}
\] associated to the generators of some chamber stair $\Gamma_C$ and let $\Gamma_K=\Set{(m,i)\in \Calt_G|m\in K}$ be the subset of the representation tableau corresopnding to $K$. Given a monomial $m_b\in K$ corresponding to a box $b\in \Gamma_C\subset \Gamma_K$, we   say that:
\begin{itemize}
    \item the property \textbf{(A$_j$)} holds for $m_b$ (or for $b$) if 
    \[x^{-j}y^{k-j}\cdot m_b \in\Gamma_K,\]
    \item the property \textbf{(C$_j$)} holds for $m_b$ (or for $b$) if 
    \[x^{j-1}y^{-k+j-1}\cdot m_b \in\Gamma_K.\]
\end{itemize}
\end{definition}
\begin{lemma}\label{fine}
If the property \textbf{(A$_j$)} (resp. \textbf{(C$_j$)}) holds for a box $b\in\Gamma_C$ then it holds also for the box after (resp. before) $b$. 
\end{lemma}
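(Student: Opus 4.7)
I will focus on the $(A_j)$ statement; the $(C_j)$ one follows from the diagonal symmetry $(x,y)\leftrightarrow(y,x)$, which exchanges $a_j$ with $c_j$, reverses the total order on $\Gamma_C$ (interchanging ``after'' and ``before''), and sends the property $(A_j)$ to $(C_{k-j+1})$. Let $b=(m,n)\in\Gamma_C$ satisfy $(A_j)$, so $(m-j,n+k-j)\in\Gamma_K$; by the stair axiom the box $b'$ immediately after $b$ is either $(m+1,n)$ or $(m,n-1)$. The rightward case is immediate, since $(m+1-j,n+k-j)$ is obtained from $(m-j,n+k-j)$ by the shift $(1,0)$ and $\Gamma_K=\bigcup_r\bigl((\alpha_r,\beta_r)+\N^2\bigr)$ is stable under right-translations.

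For the downward case $b'=(m,n-1)$ I argue by contradiction. If $(m-j,n+k-j-1)\notin\Gamma_K$ while $(m-j,n+k-j)\in\Gamma_K$, then there exists an index $r$ with $\alpha_r\leq m-j<\alpha_{r+1}$ and $\beta_r=n+k-j$, the strict inequality being needed to prevent a generator $(\alpha_{r+1},\beta_{r+1})$ with $\alpha_{r+1}\leq m-j$ and $\beta_{r+1}<\beta_r$ from placing $(m-j,n+k-j-1)$ inside $\Gamma_K$. The stair axiom together with the characterisation of the generators of $\Gamma_C$ as those column bottoms whose left neighbour sits strictly above then forces the columns of $\Gamma_C$ with $x$-coordinate strictly between $\alpha_r$ and $\alpha_{r+1}$ to each consist of a single box at row $\beta_r$, and column $\alpha_{r+1}$ to cover the rows $[\beta_{r+1},\beta_r]$. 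Since $b$ and $b'$ share the column $m$, which must therefore contain at least two boxes, all intermediate single-box columns are excluded and only $m=\alpha_{r+1}$ survives; plugging $n=\beta_r-(k-j)$ and the condition $b'\in\Gamma_C$ then yields $(\alpha_{r+1}-\alpha_r)+(\beta_r-\beta_{r+1})\geq k+1$.

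The final step, and the main combinatorial obstacle, is to contradict this via the bound $(\alpha_{r+1}-\alpha_r)+(\beta_r-\beta_{r+1})\leq k$ satisfied by any two consecutive generators of a chamber stair. Since the left-hand side equals the number of elementary (right or down) steps in the total order between $(\alpha_r,\beta_r)$ and $(\alpha_{r+1},\beta_{r+1})$, the bound reflects the rigidity of the iterative construction of $\Gamma_C$ out of its sub-$G$-stairs $\Gamma_1,\ldots,\Gamma_k$ given by Propositions \ref{propcoppia}--\ref{propcoppia1}: every transition $\Gamma_\ell\to\Gamma_{\ell+1}$ is a single horizontal-left or vertical-right cut, and a gap exceeding $k$ steps between two consecutive generators would require some $\Gamma_\ell$ to be supported entirely in the single-box corridor between them, incompatible with $\Gamma_\ell$ having width-plus-height equal to $k+1$. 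Granted this bound, the inequality $\geq k+1$ derived above is the required contradiction, closing the downward case of $(A_j)$; the symmetry then settles $(C_j)$.
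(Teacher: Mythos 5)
Your reduction of \textbf{(C$_j$)} to \textbf{(A$_j$)} by the coordinate swap and your treatment of the rightward case are fine, but the downward case contains two genuine gaps, the first of which is fatal as written. From $\alpha_r\le m-j<\alpha_{r+1}$ and $\beta_r=n+k-j$ you conclude that the column $m$ containing $b,b'$ must be one of the columns between $\alpha_r$ and $\alpha_{r+1}$, hence $m=\alpha_{r+1}$. But your inequalities only give $\alpha_r+j\le m<\alpha_{r+1}+j$, so for $j\ge 2$ the column $m$ may lie strictly to the right of $\alpha_{r+1}$: all one can say is that $m=\alpha_t$ for some $t\ge r+1$ (a column containing two boxes is a generator column), and when $t>r+1$ your final inequality becomes $(\alpha_t-\alpha_r)+(\beta_r-\beta_t)\ge k+1$ for \emph{non-consecutive} generators, which contradicts nothing. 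This is not a hypothetical loophole: for $k=6$, $j=4$, a stair with generators $(0,10)$, $(1,8)$, $(4,5)$ and $b=(4,8)$, $b'=(4,7)$ satisfies every constraint you actually use --- including your bound, since the consecutive gaps are $3$ and $6$ --- yet $(0,10)\in\Gamma_K$ while $(0,9)\notin\Gamma_K$, so \textbf{(A$_4$)} holds at $b$ and fails at $b'$, and here $m=4=\alpha_{r+2}$. Such a stair is of course not a chamber stair, but the only thing that rules it out is the gluing constraint of \Cref{propcoppia}, namely that each $\Gamma_{\ell+1}$ continues $\Gamma_\ell$ inside the decreasing linking stair by a translate of the initial segment of $\Gamma_\ell$; your argument never invokes this, so it cannot distinguish such a configuration from a genuine chamber stair.

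The second gap is the bound $(\alpha_{r+1}-\alpha_r)+(\beta_r-\beta_{r+1})\le k$ itself, which you identify as the main obstacle and then justify only heuristically; the heuristic does not hold up, because an L-shaped corridor of more than $k+1$ boxes between two consecutive generators can perfectly well contain a $G$-stair of width plus height equal to $k+1$, so no incompatibility arises in the way you suggest. The paper's proof sidesteps both issues: it takes the connected substair $\Gamma$ of $\Gamma_C$ running from the box of $x^{\alpha}y^{\beta}$ to the box $b_1$ of $m_1$ (exactly $k+1$ boxes), removes $b_1$ to obtain a $G$-substair $\Gamma'$ whose first box is a vertical left cut, and then invokes the dichotomy of \Cref{CONCLUSION} to force the last box of $\Gamma'$ to be a vertical right cut, which identifies it with $b_2$. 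That dichotomy is precisely the distilled form of the gluing structure your argument is missing; both the localisation of the column $m$ and any bound on gaps between generators would have to be derived from it before your approach could close.
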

\begin{proof}
Let $m_b=x^\alpha y^\beta$ be the monomial associated to the box $b$. From \Cref{defajcj}, it follows immediately that, if the property \textbf{(A$_j$)} (resp. \textbf{(C$_j$)}) holds for $b$, then it holds for all the monomials $x^\gamma y^\delta$ such that $\gamma \ge\alpha$ and $\delta\ge \beta$. This proves the Lemma in the case in which the box after (resp. before) $b$ is on the right (resp. above) $b$.

We prove the remaining case for the property \textbf{(C$_j$)} and we leave the similar proof for \textbf{(A$_j$)}. We have to prove that, if two monomials of the form $x^\alpha y^\beta,x^{\alpha-1} y^\beta$ correspond to some successive boxes in $\Gamma_C$ and the property \textbf{(C$_j$)} holds for $x^\alpha y^\beta$ then it holds also for $x^{\alpha-1} y^\beta$. In other words, we suppose that 
\[m_1=x^{\alpha +j-1} y^{\beta-k+j-1}\in K,\] 
and we want to prove that  
\[m_2=x^{\alpha +j-2} y^{\beta-k+j-1}\in K.\]
Let $b_1,b_2$ be the boxes corresponding to $m_1,m_2$ and let $b$ be the box corresponding to $x^{\alpha-1}y^\beta$. If $b_1\in\Gamma_K\smallsetminus\Gamma_C$ it follows easily that $b_2\in\Gamma_K$. Suppose  $b_1\in \Gamma_C$ and consider the connected substair $\Gamma\subset\Gamma_C$ whose first box is $b$ and whose last box is $b_1$. We have, by construction,
\[\mathfrak{w}(\Gamma)=j\mbox{ and }\mathfrak{h}(\Gamma)=k-j+2,\]
which imply that $\Gamma$ contains $k+1$ boxes.

Let $\Gamma'=\Gamma\smallsetminus \{b_1\}$ be the connected $G$-substair of $\Gamma_C$ obtained by removing the last box from $\Gamma$ and let $b'\in\Gamma_C$ be the last box of $\Gamma'$. Now, by construction, $b$ is a vertical left cut for $\Gamma'$ in $\Gamma_C$ and, as a consequence of \Cref{CONCLUSION} also $b'$ is a vertical cut. Therefore $b'$ must correspond to the monomial $m_2$ from which it directly follows
\[b'=b_2\in \Gamma_C. \]
Which implies the thesis.
\end{proof}
\begin{proof}( \textit{of \Cref{costruzione}} ). If we endow the product $\Calm_C\times \A^2$ with the $G$-action defined by
	\[
\begin{tikzcd}[row sep=tiny]
G\times\Calm_C\times \A^2 \arrow{r} &\Calm_C\times \A^2 \\
(g_k^i,p,(x,y))\arrow[mapsto]{r} & (p,(\xi_k^{-i}x,\xi_k^{i}y)).
\end{tikzcd}
\]
	where $g_k$ is the (fixed) generator of the cyclic group $G$ (see subsection \ref{section2.1}), it turns out that the $\Calo_{\Calm_C\times \A^2}$-module $\widetilde{\Calk}$ is $G$-equivariant with respect to this action.
	
	To prove the theorem, we   use the description of $\widetilde{\Calk}$ given in \Cref{sullecarte}.
	We know from \Cref{freunic} that the tautological bundles $\Calr_C$ and $\Calr_{C_G}$ agree on the complement $U_C$ of the exceptional locus of $\Calm_C$. Moreover, we have, as a consequence of the construction of $\widetilde{\Calk}$ and of \Cref{GSVhilb}, isomorphisms
	\[
	{\Calr_{C}}_{|_{U_C}}\cong{\Calr_{C_G}}_{|_{U_C}}\cong{\widetilde{\Calk}}_{|_{U_C}}\cong \Calo_{U_C}^{\oplus k}.
	\]
	
	Now we show that the fibers of $\Calr_C$ and $\widetilde{\Calk}$ over the toric points of $\Calm_C$ are the same $G$-constellations. This will be enough to prove the statement, because each chamber is uniquely identified by its toric $G$-constellations. We split this part in several steps:
	\begin{itemize}
		\item[STEP 0] Over each point of $p\in \Calm_C$ the fiber $\widetilde{\Calk}_p$ is a $G$-equivariant $\C[x,y]$-module and, over each origin $0_j\in U_j$ the fibre $\widetilde{\Calk}_{0_j}$ is also $\T^2$-equivariant. This follows from the fact that the ideal $K_j$ is generated by monomials and that the ideal $I_j$ is generated by $G$-eigenbinomials (recall that the group $G$ acts trivially on $U_j$) of positive degrees in the variables $a_j,c_j$.
		\item[STEP 1] All the $G$-sFd associated to the toric fibers of $\widetilde{\Calk}$ are substairs of the $C$-stair $\Gamma_C$. For this, see \Cref{GSFDGIUSTO}.
		\item[STEP 2] For all $j=1,\ldots,k$, the $j$-th torus equivariant $G$-module $\widetilde{\Calk}_{0_j}$ is indecomposable. Let $\Gamma_j\subset\Gamma_C$ be the $G$-sFd associated to $\widetilde{\Calk}_{0_j}$. Then, the $G$-constellation $\widetilde{\Calk}_{0_j}$ is indecomposable if and only if $\Gamma_j$ is connected. 
		
		First observe that, for a box $b\in\Gamma_C$ both the properties \textbf{(A$_j$)} and \textbf{(C$_j$)} implies that the corresponding monomial $m_b$ belongs to $(K _j\cap I_j)+(a_j,c_j)$. This is true because, if $m_b=x^\alpha y^\beta$, then
		\begin{equation}
		\label{motivi}
		\begin{array}{cc}
		     \mbox{\textbf{(A$_j$)}}\Rightarrow & a_jx^{\alpha-j}y^{\beta +k-j}-x^\alpha y^\beta\in K_j\cap I_j,  \\
		     \mbox{\textbf{(C$_j$)}}\Rightarrow & c_jx^{\alpha+j-1}y^{\beta -k+j-1}-x^\alpha y^\beta\in K_j\cap I_j.
		\end{array}
		\end{equation}
		On the other hand, $b\in\Gamma_C\smallsetminus \Gamma_j$ if and only if $m_b\in (K_j \cap I_j)+(a_j,c_j)$. In particular, by construction, at least one of the following relations is true.
		\begin{enumerate}
		    \item $a_jx^{\alpha-j}y^{\beta +k-j}-x^\alpha y^\beta\in K_j\cap I_j$,
		    \item $  c_jx^{\alpha+j-1}y^{\beta -k+j-1}-x^\alpha y^\beta\in K_j\cap I_j$,
		    \item\label{3} $ a_jc_jx^{\alpha-1}y^{\beta -1}-x^\alpha y^\beta\in K_j\cap I_j$.
		\end{enumerate}
		Notice that $b\in\Gamma_C$ implies (see STEP 1) that  \eqref{3}  can not hold true. Therefore, given $b\in\Gamma_C$, it belongs to $\Gamma_j$ if and only if one among the two properties $\mbox{\textbf{(A$_j$)}}$ and $\mbox{\textbf{(C$_j$)}}$ holds for $b$. Now, the connectedness of $\Gamma_j$ is a consequence of \Cref{fine}.
		\item[STEP 3] 
    Let, for all $j=1,\ldots,k$, $\mathfrak m_j \subset \Calo_{\Calm_C,0_j}$ be the maximal ideal, and let 
  \[
  F_j= \widetilde{\Calk}_{0_j}/\mathfrak m_j =\widetilde{\Calk}_{0_j}\underset{\Calo_{\Calm_C,0_j}}{\otimes}(\Calo_{\Calm_C,0_j} /\mathfrak m_j) 
  \]
  be the fibre of the sheaf $\widetilde{\Calk}$ over the point $0_j$.  We show now that
  \[
  \dim_\C F_j= k.
  \]
  This implies, together with the previous step that, for all $j=1,\ldots,k$, the $G$-module $ \widetilde{\Calk}_{0_j} $ is a toric $G$-constellation.
  First notice that, by semicontinuity of the dimension of the fibers of a coherent sheaf (cf. \cite[Example 12.7.2]{HARTSHORNE}), we have
  \begin{equation}
      \label{primainequ}
  \dim_\C F_j\ge  k.
  \end{equation}
Let us suppose $j\ge 2$, the case $j=1$ was shown in \Cref{onefibreisok}.
  Let $\Gamma_j\subset\Gamma_C$ be, as in the previous step, the $G$-sFd associated to $\widetilde{\Calk}_{0_j}$, and let $x^\alpha y^\beta$ be the monomial in $\C[x,y]\subset\C[a_j,c_j,x,y]$ corresponding to the first box of $\Gamma_j$. Then, if the monomial $x^{\alpha+a}y^{\beta+b}$ corresponds to a box of $\Gamma_C$ for some $a\ge j$ and $j-k\le b \le 0$, it has the property  \textbf{(A$_j$)}. As in STEP 2, \Cref{fine} implies that $x^{\alpha+a}y^{\beta+b}\notin\Gamma_i$. Suppose that, $x^{\gamma}y^{\beta-k+j-1}\in\Gamma_i$ for some $\alpha+1\le\gamma\le \alpha+j$. Then, if we have $x^{\gamma'}y^\beta\in\Gamma_i$ for some $\gamma-j+1\le\gamma'\le \gamma$, the following relation
			\[
		    \begin{array}{cc}
		           c_jx^{\gamma'+j-1} y^{\beta -k+j-1}-x^{\gamma'} y^{\beta} \in K_j\cap I_j,
		    \end{array}
		    \]
		implies that $ x^{\gamma'} y^{\beta} \in K_j\cap I_j+(a_j,c_j)$. Now, by construction we have $\alpha-j+2\le\gamma'\le\alpha+j$ and we have fixed $j\ge 2$. Thus, $\gamma'=\alpha$ gives the contraddiction $x^\alpha y^\beta\notin\Gamma_i$.
  
	As a consequence, $x^{\gamma}y^{\beta-k+j-1}\notin\Gamma_i$ for all $\alpha+1\le\gamma\le \alpha+j$. Now, thanks to the connectedness proven in STEP 2, we have
		$$ \mathfrak{w}(\widetilde{\Calk}_{0_j})\le j \quad \mbox{ and }\quad \mathfrak{h}(\widetilde{\Calk}_{0_j})\le k-j+1, $$ which together imply
  \[
\dim_\C F_j=  \mathfrak{w}(\widetilde{\Calk}_{0_j})+ \mathfrak{h}(\widetilde{\Calk}_{0_j})-1\le k.
  \]
		The equality $\dim_\C F_j= k$ follows now from \Cref{primainequ}.
  
		\item[STEP 4] As an immediate consequence of the previous step, for all $j=1,\ldots,k$, the $G$-constellation $\widetilde{\Calk}_{0_j}$ has width $\mathfrak{w}(\widetilde{\Calk}_{0_j})= j$, see \Cref{hwchart}. Hence, for $j=1,\ldots,k$, they are different to each other.  
	\end{itemize}
Now, the above listed properties imply that $\widetilde{\Calk}$ is the tautological bundle $\Calr_{C'}$ of some chamber $C'\subset\Theta^{\gen}$ which admits $\Gamma_C$ as $C'$-stair and this, by \Cref{unicCstair}, implies $C'=C$.
\end{proof}
\begin{remark}
As expected, in dimension 3  \Cref{costruzione} is  in general false. For instance, given the $(\Z/2\Z)^2$-action over $\A^3$ defined by the inclusion
\[
\begin{tikzcd}[row sep=tiny]
(\Z/2\Z)^2\arrow{r}{ } &  \SL(3,\C) \\
(1,0)\arrow[mapsto]{r} & \begin{pmatrix} -1&0&0\\0&1&0\\0&0&-1
\end{pmatrix},\\
(0,1)\arrow[mapsto]{r} & \begin{pmatrix} 1&0&0\\0&-1&0\\0&0&-1
\end{pmatrix},
\end{tikzcd}\]
the quotient singularity $X=\A^3/(\Z/2\Z)^2$ admits four different crepant resolutions $\varepsilon_i:Y_i\rightarrow X$, for $i=1,\ldots,4$. All of them are toric and they are described by the planar graphs in \Cref{Z2Z2}.
    	\begin{figure}[H]
    	\scalebox{1}{
			\begin{tikzpicture}[scale=0.65]
	\draw (-1,2)--(1,2)--(0,0)--(-1,2);
	\draw (-2,0)--(2,0)--(0,4)--(-2,0);
	\fill (-2,0) circle (1.2pt);
	\fill (2,0) circle (1.2pt);
	\fill (0,4) circle (1.2pt);
	\fill (-1,2) circle (1.2pt);
	\fill (0,0) circle (1.2pt);
	\fill (1,2) circle (1.2pt);
	\node at (-2.2,-0.2) {$e_1$};
	\node at (2.2,-0.2) {$e_2$};
	\node at (0,4.2) {$e_3$};
	\node at (1.2,2.2) {$v_1$};
	\node at (-1.3,2.2) {$v_2$};
	\node at (0,-0.3) {$v_3$};
	\node at (0,-1.5) {$Y_1$};
	\end{tikzpicture}
			}
			\scalebox{1}{
			\begin{tikzpicture}[scale=0.65]
		\draw (-1,2)--(1,2)--(0,0);
		\draw (1,2)--(-2,0);
		\draw (-2,0)--(2,0)--(0,4)--(-2,0);
		\fill (-2,0) circle (1.2pt);
		\fill (2,0) circle (1.2pt);
		\fill (0,4) circle (1.2pt);
		\fill (-1,2) circle (1.2pt);
		\fill (0,0) circle (1.2pt);
		\fill (1,2) circle (1.2pt);
		\node at (-2.2,-0.2) {$e_1$};
		\node at (2.2,-0.2) {$e_2$};
		\node at (0,4.2) {$e_3$};
		\node at (1.2,2.2) {$v_1$};
	\node at (-1.3,2.2) {$v_2$};
	\node at (0,-0.3) {$v_3$};
	\node at (0,-1.5) {$Y_2$};
		\end{tikzpicture}
			}
			\scalebox{1}{
			\begin{tikzpicture}[scale=0.65]
		\draw (0,0)--(-1,2)--(1,2);
		\draw (-1,2)--(2,0);
		\draw (-2,0)--(2,0)--(0,4)--(-2,0);
		\fill (-2,0) circle (1.2pt);
		\fill (2,0) circle (1.2pt);
		\fill (0,4) circle (1.2pt);
		\fill (-1,2) circle (1.2pt);
		\fill (0,0) circle (1.2pt);
		\fill (1,2) circle (1.2pt);
		\node at (-2.2,-0.2) {$e_1$};
		\node at (2.2,-0.2) {$e_2$};
		\node at (0,4.2) {$e_3$};
		\node at (1.2,2.2) {$v_1$};
	\node at (-1.3,2.2) {$v_2$};
	\node at (0,-0.3) {$v_3$};
	\node at (0,-1.5) {$Y_3$};
		\end{tikzpicture}
			}
			\scalebox{1}{
			\begin{tikzpicture}[scale=0.65]
		\draw (-1,2)--(0,0)--(1,2);
		\draw (0,0)--(0,4);
		\draw (-2,0)--(2,0)--(0,4)--(-2,0);
		\fill (-2,0) circle (1.2pt);
		\fill (2,0) circle (1.2pt);
		\fill (0,4) circle (1.2pt);
		\fill (-1,2) circle (1.2pt);
		\fill (0,0) circle (1.2pt);
		\fill (1,2) circle (1.2pt);
		\node at (-2.2,-0.2) {$e_1$};
		\node at (2.2,-0.2) {$e_2$};
		\node at (0,4.2) {$e_3$};
		\node at (1.2,2.2) {$v_1$};
	\node at (-1.3,2.2) {$v_2$};
	\node at (0,-0.3) {$v_3$};
	\node at (0,-1.5) {$Y_4$};
		\end{tikzpicture}
			}
			\caption{Toric description of the crepant resolutions of $\A^3/(\Z/2\Z)^2$.}
			\label{Z2Z2}
		\end{figure}
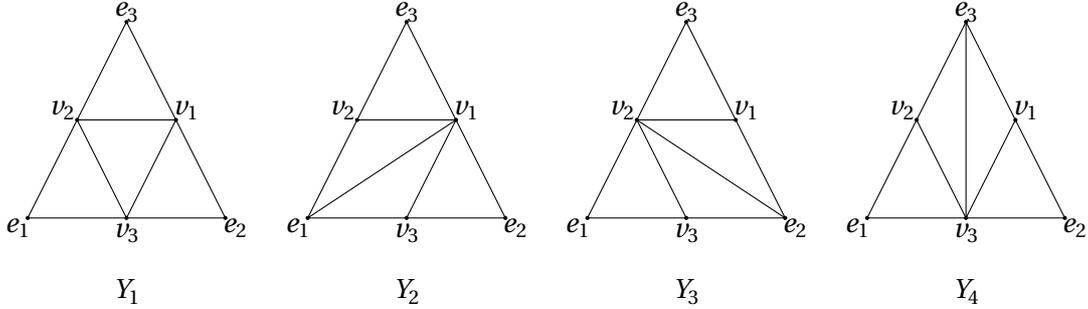 
These diagrams are obtained by considering a fan $\Sigma_i$ for each resolution $Y_i$ then, each simplex in the planar graph is the intersection of a cone in $\Sigma_i$, with the plane containing the heads of the rays that generate $\Sigma_i$. Notice that $Y_1$ differs from the other resolutions by just one flop 
$\begin{tikzcd}[row sep=tiny]
{Y_1}\arrow[dashed,leftrightarrow]{r}{\sigma_i} & Y_i
\end{tikzcd}$ for $i=2,3,4$.

Now, let $\widetilde{\Calo}_i$, for $i=1,\ldots,4$, be the torsion free $\Calo_{Y_i}$-module defined by 
\[
\widetilde{\Calo}_i=\varepsilon_i^*\pi_*\Calo_{\A^3}/\tor_{\Calo_{Y_i}}\varepsilon_i^*\pi_*\Calo_{\A^3},
\]
where $\pi:\A^3\rightarrow X$ is the canonical projection. A direct computation shows that only $\widetilde{\Calo}_1$ is locally free, and, for $i=2,3,4$, the locus where $\widetilde{\Calo}_i$ fails to be locally free coincides with the line flopped by $\sigma_i$. In this setting, it can be shown that the pair $(Y_1,\widetilde{\Calo}_i)$ is canonically isomorphic to the pair $((\Z/2\Z)^2-\Hilb(\A^3),\Calr)$ where $\Calr$ is the tautological bundle.

My future project is to work out conditions on an ideal sheaf $\Calk \subset \Calo_{\A^3}$ and a crepant resolution $Y$ of $\A^3/G$, for $G\subset \SL(3,\C)$ finite subgroup, in order to have $\widetilde \Calk$ locally free and isomorphic to $\Calo_{\A^3}[G]$.
\end{remark}
\providecommand{\bysame}{\leavevmode\hbox to3em{\hrulefill}\thinspace}
\providecommand{\MR}{\relax\ifhmode\unskip\space\fi MR }
\providecommand{\MRhref}[2]{%
	\href{http://www.ams.org/mathscinet-getitem?mr=#1}{#2}
}
\providecommand{\href}[2]{#2}

\bigskip

\medskip
\noindent
\emph{Michele Graffeo}, \texttt{mgraffeo@sissa.it} \\
\textsc{Scuola Internazionale Superiore di Studi Avanzati (SISSA), Via Bonomea 265, 34136 Trieste, Italy}
\end{document}